\pgfplotsset{compat=1.15}
\newtheorem{theorem}{Theorem}[section]
\newtheorem{lemma}[theorem]{Lemma}
\newtheorem{corollary}[theorem]{Corollary}
\newtheorem{proposition}[theorem]{Proposition}
\theoremstyle{definition}
\newtheorem{definition}[theorem]{Definition}
\newtheorem{example}[theorem]{Example}
\newtheorem{remark}[theorem]{Remark}
\crefname{claim}{claim}{claims}
\def\R{{\mathbb R}}
\def\Ec{{\mathcal E}}
\def\Pc{{\mathcal P}}
\DeclareMathOperator*{\argmax}{arg\,max}
\DeclareMathOperator*{\argmin}{arg\,min}
\newcommand{\bracket}[1]{\langle #1 \rangle}
\newcommand{\abs}[1]{\lvert #1 \rvert}\newcommand{\norm}[1]{\lVert #1 \rVert}
\newcommand{\OT}{\mathcal{T}}
\newcommand{\ET}{\mathcal{T}}
\newcommand{\UT}{\mathcal{U}\mathcal{T}}
\newcommand{\Mp}{{\mathcal{M}_+}}
\newcommand{\Mm}{{\mathcal{M}_-}}
\newcommand{\Lc}{{\mathcal L}}
\newcommand{\Mc}{{\mathcal M}}
\newcommand{\Tc}{{\mathcal T}}
\newcommand{\eps}{\varepsilon}
\DeclareMathOperator{\dom}{dom}
\DeclareMathOperator{\KL}{KL}
\newcommand{\leP}{\ll_{\mathrm P}}
\newcommand{\leQ}{\ll_{\mathrm Q}}
\newcommand{\Leb}{{\mathcal L}}
\newcommand{\JKO}{\mathcal{S}}
\DeclareMathOperator{\dist}{dist}
\newcommand{\Omegax}{{\Omega}}
\newcommand{\Omegay}{{\Omega^*}}
\newcommand{\Rinf}{\R\cup\{+\infty\}}
\DeclareMathOperator{\supp}{supp}
\let\save@mathaccent\mathaccent
\newcommand*\if@single[3]{%
  \setbox0\hbox{${\mathaccent"0362{#1}}^H$}%
  \setbox2\hbox{${\mathaccent"0362{\kern0pt#1}}^H$}%
  \ifdim\ht0=\ht2 #3\else #2\fi
  }
\newcommand*\rel@kern[1]{\kern#1\dimexpr\macc@kerna}
\newcommand*\widebar[1]{\@ifnextchar^{{\wide@bar{#1}{0}}}{\wide@bar{#1}{1}}}
\newcommand*\wide@bar[2]{\if@single{#1}{\wide@bar@{#1}{#2}{1}}{\wide@bar@{#1}{#2}{2}}}
\newcommand*\wide@bar@[3]{%
  \begingroup
  \def\mathaccent##1##2{%
    \let\mathaccent\save@mathaccent
    \if#32 \let\macc@nucleus\first@char \fi
    \setbox\z@\hbox{$\macc@style{\macc@nucleus}_{}$}%
    \setbox\tw@\hbox{$\macc@style{\macc@nucleus}{}_{}$}%
    \dimen@\wd\tw@
    \advance\dimen@-\wd\z@
    \divide\dimen@ 3
    \@tempdima\wd\tw@
    \advance\@tempdima-\scriptspace
    \divide\@tempdima 10
    \advance\dimen@-\@tempdima
    \ifdim\dimen@>\z@ \dimen@0pt\fi
    \rel@kern{0.6}\kern-\dimen@
    \if#31
      \overline{\rel@kern{-0.6}\kern\dimen@\macc@nucleus\rel@kern{0.4}\kern\dimen@}%
      \advance\dimen@0.4\dimexpr\macc@kerna
      \let\final@kern#2%
      \ifdim\dimen@<\z@ \let\final@kern1\fi
      \if\final@kern1 \kern-\dimen@\fi
    \else
      \overline{\rel@kern{-0.6}\kern\dimen@#1}%
    \fi
  }%
  \macc@depth\@ne
  \let\math@bgroup\@empty \let\math@egroup\macc@set@skewchar
  \mathsurround\z@ \frozen@everymath{\mathgroup\macc@group\relax}%
  \macc@set@skewchar\relax
  \let\mathaccentV\macc@nested@a
  \if#31
    \macc@nested@a\relax111{#1}%
  \else
    \def\gobble@till@marker##1\endmarker{}%
    \futurelet\first@char\gobble@till@marker#1\endmarker
    \ifcat\noexpand\first@char A\else
      \def\first@char{}%
    \fi
    \macc@nested@a\relax111{\first@char}%
  \fi
  \endgroup
}
\newcommand{\Ub}{{\widebar U}}
\title{A synthetic approach to comparison principles for variational problems, 
with applications to optimal transport}
\author{Flavien Léger, Maxime Sylvestre}
\date{\today}
\begin{document}

\maketitle

\begin{abstract}
    We develop a synthetic, variational framework for deriving comparison principles in infinite-dimensional Banach spaces. Unlike traditional approaches that rely on the regularity of minimizers and Euler--Lagrange equations, our method exploits the order-theoretic structure of the energy. Central to our analysis is the notion of submodularity and its convex dual, substitutability, which we extend here to the infinite-dimensional setting. We prove a duality theorem establishing that a convex functional is submodular if and only if its conjugate is substitutable.
    We apply these results to problems in optimal transport, and derive comparison principles for Kantorovich potentials in standard, entropic, and unbalanced settings without requiring regularity assumptions on the cost or domain. Finally, we prove that general transport costs are substitutable, yielding comparison principles for JKO schemes driven by internal energies.
\end{abstract}
\setcounter{tocdepth}{3}
\tableofcontents

\section{Introduction}

Comparison principles are fundamental tools in the study of partial differential equations (PDEs) and variational problems.
Classically, the notion that ``ordered data leads to ordered solutions'' allows one to establish uniqueness of solutions, construct barriers for regularity, or derive pointwise bounds on solutions. For a variational problem of the form
\begin{equation}\label{eq:intro-var-pb}
    \min_{u\in X} \, \Ec(u,f),
\end{equation}
a comparison principle typically says that if the parameters satisfy $f_1 \leq f_2$ (in the pointwise order), then the respective unique minimizers satisfy $u(f_1) \leq u(f_2)$. The standard route to proving such results in the continuum setting is differential: one derives the Euler--Lagrange equations (e.g., a quasilinear elliptic PDE), assumes sufficient regularity of the data and the domain to ensure the existence of smooth solutions, and applies a maximum principle. 

In this paper, we propose a fundamentally different, ``synthetic'' approach. We derive comparison principles purely from the variational structure of the problem, relying on specific order-theoretic properties of the energy. This approach offers two distinct advantages. First, it frees us from regularity assumptions. By bypassing the differential formulation entirely, we obtain results for non-smooth quantities, for general measures that may lack absolute continuity, and over infinite-dimensional domains. Second, it allows us to formulate comparison principles even when uniqueness of solutions fails. 
Instead of comparing singletons, we derive orderings between \textit{solution sets} $S_i(f) \coloneqq \argmin \Ec(\cdot, f)$.
We establish comparisons of the form
\begin{equation}\label{eq:intro-cp-nonunique}
    f_1\leq f_2\implies S(f_1) \ll S(f_2),
\end{equation}
where the relation $\ll$ denotes one of two orderings: \emph{P-dominance} or \emph{Q-dominance}. For instance, P-dominance says that for any $u_1 \in S(f_1)$ and $u_2 \in S(f_2)$, the pointwise infimum $u_1 \wedge u_2$ lies in $S(f_1)$ while the pointwise supremum $u_1 \vee u_2$ belongs to $S(f_2)$.
We emphasize that our framework does not establish uniqueness of solutions; this property must be obtained by other means, such as the strict convexity of the energy. However, whenever uniqueness holds, our set-valued results naturally collapse to the standard pointwise comparison principle $u_1 \leq u_2$.

The machinery driving these results rests on two dual concepts: the classical notion of \emph{submodularity} and the recently introduced notion of \emph{substitutability}.

\subsection{Submodularity and substitutability}

The notion of submodularity has a long history, appearing in various fields under different names. 
In potential theory, Choquet studied submodularity (under the name $2$-alternating capacities) to extend the concept of capacity from compact to analytic sets \cite{choquet}.
In economics and operations research, submodularity is the engine behind the theory of \emph{monotone comparative statics} \cite{Topkis1978,MilgromShannon1994,TopkisBook1998}.
Pioneered by Topkis in the context of lattice programming, this theory establishes conditions under which optimal decisions may increase or decrease in response to changes in the problem data. We adapt this lattice-theoretic mechanism to derive comparison principles for functionals on Banach spaces. Submodularity  is also a central concept in combinatorial optimization \cite{FujishigeBook2005,jon,Bach2013}, often regarded there as the discrete analogue of convexity \cite{Lovasz1983, Murota2003}, where it allows for the efficient minimization of set functions.

In the context of a space of functions $X$ equipped with a pointwise order (such as a space of continuous functions, or an $L^p$ space), a functional $E$ is called \emph{submodular} if, for all $\phi_1, \phi_2 \in X$,
\begin{equation*}\label{eq:intro-submodular}
    E(\phi_1 \wedge \phi_2)+E(\phi_1 \vee\phi_2)\leq E(\phi_1)+E(\phi_2).
\end{equation*}
Here, $\wedge$ and $\vee$ correspond to the pointwise minimum and maximum of two functions. 
While typically associated with discrete problems, submodularity is pervasive in the calculus of variations. For instance, the Dirichlet energy $\phi \mapsto \int_\Omega \abs{\nabla \phi}^2$ is submodular on $H^1(\Omega)$ (in fact, it satisfies the inequality with equality, see \Cref{ex:dirichlet}). Similarly, the perimeter functional exhibits this property; notably, Chambolle and Darbon exploit this submodularity to derive comparison principles for perimeter-minimizing sets in the context of image analysis \cite{Chambolle2009}. Other examples include fractional Sobolev seminorms, and concave functions of a sum (e.g., $u \mapsto g(\int_\Omega u)$ for a concave $g$), see \Cref{sec:examples-submodular}. In this work, we review these properties within the unified framework of Banach lattices, making these tools accessible for applications in analysis. 


It turns out that submodularity alone is insufficient for many applications, particularly those arising from submodular minimization via convex duality. It is a known result that the convex conjugate of a submodular function is \emph{supermodular} (i.e. inequality \eqref{eq:intro-submodular} holds reversed). Unfortunately, supermodularity does not generally yield comparison principles for minimization problems. 
To resolve this, we study \emph{substitutability}. Recently introduced in a finite-dimensional context by Galichon, Hsieh, and Sylvestre~\cite{GalichonHsiehSylvestre2024}, this condition proves to be the correct conjugate notion to submodularity. Precursors to this idea exist in the finite-dimensional literature: Murota studied a discrete counterpart known as M$^\natural$-convexity~\cite{Murota2003}, while Chen and Li~\cite{Chen2020} proposed a similar notion under regularity assumptions. While these works derived monotone comparative statics in finite dimensions, we build here upon~\cite{GalichonHsiehSylvestre2024} to extend the framework to the infinite-dimensional setting. We establish that P-dominance (submodularity) in the primal space corresponds to Q-dominance (substitutability) in the dual space. This duality result (\cref{cor:submodular-exchangeable}) allows us to detect the presence of comparison principles in a variational problem purely by inspecting the structure of its convex dual.

\subsection{Applications to optimal transport}

We demonstrate the power of our approach by applying it to certain optimal transport problems. We obtain two main classes of results:

\subsubsection*{Comparison of Kantorovich Potentials} 

We show that the dual optimal transport functional is submodular (\Cref{lemma:K-submodular}). From this observation, we derive a general comparison principle for Kantorovich potentials (\Cref{thm:comp-principle-potentials}). Our result allows for a comparison where the source measures $\mu_1, \mu_2$ are ordered on a subset $U \subset\Omega$, while the potentials satisfy a boundary condition on $\Omega \setminus U$. 
This formulation recovers the comparison principle for the Monge--Ampère equation but requires no regularity of the cost or the domain. Furthermore, because our approach relies solely on the submodular structure of the dual problem, it applies equally to \emph{entropic optimal transport} and \emph{unbalanced optimal transport} (\Cref{thm:entropic-ot-comp}, \Cref{thm:uot-pot}), settings where PDE methods are significantly more complex or unavailable.

To be precise, let $\Omegax$ and $\Omegay$ be two compact metric spaces and let $c$ be a continuous function over $\Omegax \times \Omegay$. We obtain results for three major classes of transport costs $\Tc(\mu,\nu)$:
\begin{enumerate}[label=(\Roman*), leftmargin=*]
    \item \textbf{Standard Optimal Transport:}
    \begin{equation*}
        \Tc_c(\mu,\nu) = \inf_{\pi \in \Pi(\mu,\nu)} \int_{\Omegax \times \Omegay} c(x,y) \,d\pi(x,y).
    \end{equation*}
    
    \item \textbf{Entropic Optimal Transport:}
    \begin{equation*}
        \ET_{c,\eps}(\mu,\nu) = \inf_{\pi \in \Pi(\mu,\nu)} \int_{\Omegax \times \Omegay} c(x,y) \,d\pi(x,y) + \eps \KL(\pi \mid \alpha \otimes \beta).
    \end{equation*}
    (with $\eps > 0$ and reference measures $\alpha$, $\beta$).
    
    \item \textbf{Unbalanced Optimal Transport:} 
    \begin{equation*}
        \UT_{h,c}(\mu,\nu) = \inf_{\pi \in \Mp(\Omegax\times\Omegay)} H_{h_0,\mu}(\pi_0) + H_{h_1,\nu}(\pi_1)+ \int_{\Omegax \times \Omegay} c(x,y) \,d\pi(x,y).
    \end{equation*} 
     (where $H_{h_i,m}$ are convex internal energy functionals defined in \Cref{def:internal-energy}).
\end{enumerate}

We delay precise definitions to \cref{sec:cp-exch}.

\begin{theorem}[Theorems \ref{thm:comp-principle-potentials}, \ref{thm:entropic-ot-comp} and \ref{thm:uot-pot}]\label{thm:intro-ot-potentials}    
    Let $\mu_1,\mu_2$ be two measures on $\Omegax$ and let $\nu$ be a  measure on $\Omegay$. 
    For $i=1,2$, let $\phi_i \in \Phi(\mu_i,\nu)$, where $\Phi$ denotes the set of dual optimizers (Kantorovich potentials) corresponding to the chosen cost $\Tc$.
    
    Let $U$ be a Borel subset of $\Omegax$. Suppose that $\mu_1\leq\mu_2$ on $U$ and that $\phi_1\leq\phi_2$ on $\Omegax\setminus U$. Then:
    \begin{equation}\label{eq:intro-thm:comp-principle-potentials}
        \phi_1\wedge\phi_2\in\Phi(\mu_1,\nu) 
        \quad\text{and}\quad
        \phi_1\vee\phi_2\in\Phi(\mu_2,\nu).
    \end{equation}
    Additionally, $\phi_1 \leq \phi_2$ on the support of $(\mu_2-\mu_1)$.
\end{theorem}

\begin{remark}
    The conclusion \eqref{eq:intro-thm:comp-principle-potentials} implies an ordering of the \emph{solution sets}. If at least one of the two potentials is known to be unique up to an additive constant, then \eqref{eq:intro-thm:comp-principle-potentials} can be strengthened into a standard comparison of the form 
\[
\phi_1\leq\phi_2 \text{ on } \Omega. 
\]
\end{remark}

To the best of our knowledge, these results are new at this level of generality. 
In the related but distinct setting of the standard JKO scheme (see also the next section), Jacobs, Kim, and Tong established a comparison principle for dual potentials~\cite{Jacobs2020TheP}. We note that their analysis also exploits monotonicity properties akin to submodularity (specifically in their Lemma 4.1). However, their arguments rely on the specific coupling enforced by the JKO optimality conditions: namely, the convex duality relation linking the Kantorovich potential to the derivative of the internal energy. 
Our variational framework complements this by isolating the lattice structure of the transport cost itself, when the specific energy-potential relation is absent. 

\subsubsection*{Comparison of JKO Flows}

The Jordan--Kinderlehrer--Otto (JKO) scheme is a minimizing movement scheme used to construct and simulate gradient flows in the space of probability measures \cite{Jordan1998}. 
Let $\Omegax$ and $\Omegay$ be two compact metric spaces and let $c$ be a continuous function over $\Omegax \times \Omegay$. Let $\Tc$ denote one of the three classes of transport costs introduced above. 
Given a convex ``internal energy'' functional $E\colon\Mp(\Omega)\to \R$, consider the minimization problem
    \begin{equation}\label{eq:intro-ot-min-problem}
    \min_{\nu\in\Mp(\Omegay)} \OT(\mu,\nu)+E(\nu).
    \end{equation}
Here $\Mp(\Omegay)$ denotes the set of positive Borel measures on $\Omegay$.
We prove the following result.

\begin{theorem}[\cref{thm:ot-cp}]\label{thm:intro-ot}
    Let $\mu_1,\mu_2\in \Mp(\Omega)$ and suppose that \eqref{eq:intro-ot-min-problem} admits a unique minimizer $\nu_i$ with data $\mu_i$, $i=1,2$. Then 
    \begin{equation*} \label{eq:intro-thm-ot-1}
        \mu_1\leq\mu_2 \implies \nu_1\leq\nu_2.
    \end{equation*}
\end{theorem}

Beyond the standard benefits of comparison principles, there are two specific consequences in the JKO setting. First, the ordering established at the discrete level persists in the continuous-time limit, therefore validating the comparison principle for the associated gradient flow PDEs. Second, because the scheme preserves total mass, a classical result of Crandall and Tartar \cite{CrandallTartar1980} implies non-expansivity of the flow in the Total Variation (TV) distance (see \cref{cor:tv-contraction}). In translation-invariant settings, this TV contraction further implies that the bounded variation norm of the solution decreases over time.

Recently, Jacobs, Kim, and Tong established the TV contraction principle (called $L^1$ contraction there) for standard optimal transport \cite{Jacobs2020TheP}. Their proof relies on the existence of optimal maps, necessitating costs that are differentiable and satisfy the twist condition. Let us also mention the earlier work \cite{AlexanderKimYao2014} which proves comparison principles for Rényi entropies and the quadratic cost. 
In complement to these two works, we derive comparison principles for the standard optimal transport with general ground cost, the entropic transport and the unbalanced transport, by identifying that the costs $\Tc(\mu, \nu)$ are \emph{substitutable} with respect to their arguments (\Cref{lemma:Tc-exchangeable}). 

Importantly, the enlarged class of costs we consider is far from arbitrary; rather, these functionals model distinct physical regimes and computational schemes that cannot be captured by standard quadratic transport.

\textbf{Standard optimal transport with general costs.}
Varying the ground cost $c(x,y)$ beyond the squared Euclidean distance drives distinct physical evolutions. For instance, costs growing as $\abs{x-y}^q$ with $q \neq 2$ generate doubly nonlinear evolution equations \cite{Agueh2005}, such as the $p$-Laplacian flow, which model turbulent filtration and non-Newtonian fluids. 
Generalized ground costs such as Bregman divergences can also be used for computational purposes, as efficient surrogates for Riemannian distances \cite{rankin2024jkoschemesgeneraltransport}. 

\textbf{Entropic Optimal Transport.} 
The entropic JKO scheme was first proposed as a computationally efficient proxy for standard Wasserstein gradient flows \cite{Peyre2015,CarlierDuvalPeyreSchmitzer2017}.
Theoretical properties of these ``entropic gradient flows'' are now actively investigated in their own right \cite{baradat2025usingsinkhornjkoscheme}, see also the related works \cite{hardion2025gradientflowspotentialenergies,agarwal2024iteratedschrodingerbridgeapproximation}.

\textbf{Unbalanced Optimal Transport} relaxes the mass conservation constraint \cite{LieroMielkeSavare2018,ChizatPeyreSchmitzerVialard2018JFA,ChizatPeyreSchmitzerVialard2018FoCM,KondratyevMonsaingeonVorotnikov2016}, allowing for the modeling of certain systems with growth and decay. This structure appears in models of tumor growth \cite{LieroMielkeSavare2016,PerthameQuirosVazquez2014} and population dynamics \cite{KondratyevMonsaingeonVorotnikov2016}. These gradient flows have also been recently used in statistical learning, to perform inclusive (i.e. reverse) KL minimization for Bayesian inference \cite{zhu2024inclusiveklminimizationwassersteinfisherrao} and learn Gaussian Mixture Models \cite{MR4804827}.

\subsection{Organization of the paper}

The paper is organized as follows. 
In \Cref{sec:submodularity-exchangeability}, we introduce the necessary background on Banach lattices and rigorously define submodularity and substitutability in infinite dimensions. 
We also describe how these notions allow us to derive synthetic comparison principles.
\Cref{sec:duality} establishes the duality between submodularity and substitutability. The section ends with concrete examples of submodular and substitutable functionals. 
Finally, \Cref{sec:cp-exch} presents the applications to optimal transport, detailing the comparison principles for Kantorovich potentials and the JKO scheme.

\section{Submodularity and substitutability}\label{sec:submodularity-exchangeability}

Comparison principles for infinite-dimensional problems involve a diverse range of partially ordered functional spaces, such as $L^p$ spaces, Sobolev spaces, spaces of continuous functions, measure spaces, and so on.
\emph{Banach lattices} turn out to be a convenient framework to unify these different settings: we give some basic definitions, properties and examples in \Cref{sec:banach-lattices}. 
In \Cref{sec:def-submodular} we review the concept of a \emph{submodular function} defined on a general Banach lattice, and in \Cref{sec:def-exchangeable} we extend the notion of \emph{substitutability} recently introduced in~\cite{GalichonHsiehSylvestre2024} to the infinite-dimensional setting.

\subsection{Banach lattices}\label{sec:banach-lattices}

Standard textbook references on Banach lattices include \cite{ZaanenBook1997,SchaeferBook1974,MeyerNiebergBook1991}. In this section we will only introduce some very elementary material which will be used in the rest of the paper.
Recall that a \emph{partial ordering} $\leq$ on a set $X$ is a binary relation which is reflexive ($\phi\leq\phi$), transitive ($\phi_1\leq\phi_2$ and $\phi_2\leq\phi_3$ imply $\phi_1\leq\phi_3$) and antisymmetric ($\phi_1\leq\phi_2$ and $\phi_2\leq\phi_1$ imply $\phi_1=\phi_2$). 
When $(X,\leq)$ is a partially ordered set and $A$ is a nonempty subset of $X$, we say that $\phi_1\in X$ is the \emph{supremum} of $A$ if it is an upper bound of $A$ (i.e.\ $\phi\leq\phi_1$ for all $\phi\in A$) and any other upper bound $\phi_2$ of $A$ satisfies $\phi_1\leq\phi_2$; in other words $\phi_1$ is the least upper bound of $A$. Similarly, $\phi_1$ is the infimum of $A$ if it is the greatest lower bound of $A$.

\begin{definition}[Lattice]
    A partially ordered set $(X,\leq)$ is a \emph{lattice} if for any two $\phi_1,\phi_2\in X$, the subset $\{\phi_1,\phi_2\}$ admits a supremum, denoted by $\phi_1\vee\phi_2$, as well as an infimum, denoted by $\phi_1\wedge\phi_2$.
\end{definition}

\begin{definition}[Banach lattice]
    Let $(X,\lVert\cdot\rVert)$ be a Banach space equipped with a partial ordering $\leq$. Denote $\phi^+=\phi\vee 0$, $\phi^-=-(\phi\wedge 0)$, and $\abs\phi=\phi^+ + \phi^-$. We say that $X=(X,\lVert\cdot\rVert,\leq)$ is a \emph{Banach lattice} if 
    \begin{enumerate}[(i)]
        \item $(X,\leq)$ is a lattice;
        
        \item the ordering is compatible with the vector space operations in the sense that $\phi_1\leq\phi_2$ implies $\phi_1+\phi_3\leq\phi_2+\phi_3$, and $\phi_1\leq\phi_2$ implies $\lambda \phi_1\leq\lambda\phi_2$, for all $\phi_1,\phi_2,\phi_3\in X$ and all $\lambda\in\R$, $\lambda\geq 0$; 
        
        \item the norm is compatible with the ordering in the sense that $\abs{\phi_1} \leq \abs{\phi_2} $ implies $\Vert \phi_1 \Vert \leq \Vert \phi_2 \Vert$ for all $\phi_1,\phi_2\in X$.
    \end{enumerate}
\end{definition}

Let us now state some elementary properties of lattice operations which will be used throughout this section.

\begin{lemma}\label{lemma:lattice-basic-prop}
    Let $X$ be a Banach lattice, and let $\phi_1,\phi_2\in X$. Then 
    \begin{enumerate}[(i)]
        \item $\phi_1+(\phi_2\vee\phi_3)=(\phi_1+\phi_2)\vee(\phi_1+\phi_3)$;
        \label{item:lemma:lattice-basic-prop-sum}
        
        \item $\phi_1\vee\phi_2+\phi_1\wedge\phi_2=\phi_1+\phi_2$;
        
        \item $\phi_1\vee\phi_2-\phi_1\wedge\phi_2=\abs{\phi_1-\phi_2}$;
        
        \item $\phi_1\leq\phi_2$ if and only if $\phi_1^+\leq\phi_2^+$ and $\phi_1^-\geq\phi_2^-$;
        \label{item:lemma:lattice-basic-prop-sum-pp-np}
        
        \item $-\phi_2\leq\phi_1\leq \phi_2$, $\phi_2\geq 0$ implies $\abs{\phi_1}\leq \phi_2$.\label{item:lemma:lattice-basic-prop-tv-bound}
    \end{enumerate}
\end{lemma}
\begin{proof}
    See \cite[Chapter 2]{ZaanenBook1997}, or \cite[Chapter II]{SchaeferBook1974}.
\end{proof}

Let us now give examples of Banach lattices.

\begin{example}
    Consider the Banach space $C(K)$ of real-valued continuous functions on a compact Hausdorff space $K$ equipped with the supremum norm $\norm{\phi}=\sup_{x\in K}\abs{\phi(x)}$. Consider the pointwise ordering on $C(K)$ defined by $\phi_1\leq \phi_2$ if $\phi_1(x)\leq\phi_2(x)$ for all $x\in K$. Then $(X,\norm{\cdot},\leq)$ is a Banach lattice, see e.g.\ \cite[Section 1.1]{MeyerNiebergBook1991} or \cite[Example 15.5(i)]{ZaanenBook1997}. The $\wedge$ and $\vee$ operators are given by the pointwise min and max, 
    \begin{align*}        
        (\phi_1\wedge\phi_2)(x)&=\min(\phi_1(x),\phi_2(x)),\\
    (\phi_1\vee\phi_2)(x)&=\max(\phi_1(x),\phi_2(x)).\\
    \end{align*}
\end{example}

\begin{example}\label{ex:lp-spaces-banach-lattice}
    Consider the Banach space $L^p(\R^n)$, $1\leq p\leq +\infty$, of all measurable functions $\phi\colon \R^n\to\R$ such that $\int_{\R^n} \abs{\phi(x)}^p \,dx < +\infty$ (or $\operatorname{esssup}_{x\in \R^n} \abs{\phi(x)} < +\infty$ if $p=+\infty$, where $\operatorname{esssup}$ denotes the essential supremum) equipped with the norm $\norm{\phi}_p=\left(\int_{\R^n} \abs{\phi(x)}^p dx\right)^{1/p}$ (or $\norm{\phi}_\infty=\operatorname{esssup}_{x\in \R^n} \abs{\phi(x)}$ if $p=+\infty$). Consider the pointwise ordering defined by $\phi_1\leq \phi_2$ if $\phi_1(x)\leq \phi_2(x)$ for almost every $x\in \R^n$. Then $(L^p(\R^n),\norm{\cdot}_p,\leq)$ is a Banach lattice, see~\cite[Example 15.5(ii)]{ZaanenBook1997} or \cite[Section 1.1]{MeyerNiebergBook1991}. The $\wedge$ and $\vee$ operators are given by the almost everywhere pointwise min and max.

    More generally, the Banach spaces $L^p(\mu)$ where $\mu$ is a positive measure on, say, a Polish space $\Omega$, are Banach lattices.
\end{example}

Another important class of Banach lattices is spaces of signed measures. Below we start by recalling some standard facts about signed measures, and point to e.g.\ \cite{BogachevBook2007} for a standard reference on the subject. 

\begin{example}\label{ex:signed-measures}

Let $(S,\Sigma)$ be a measurable space. A \emph{finite signed measure} on $(S,\Sigma)$ is a countably additive function $\mu\colon\Sigma\to\R$ such that $\mu(\emptyset)=0$; since in this paper we do not consider measures with infinite values we will often simply write signed measure instead of finite signed measures. A \emph{positive} measure is a signed measure taking only nonnegative values. The \emph{restriction} of a signed measure $\mu$ to a subset $B\in\Sigma$ is the signed measure $\mu|_B$ on $(S,\Sigma)$ defined by $\mu|_B(A)=\mu(A\cap B)$ for all $A\in\Sigma$. $\mu$ is said to be \emph{concentrated on} $B\in\Sigma$ if $\mu=\mu|_B$. Two signed measures $\mu,\nu$ are \emph{mutually singular} if they can be concentrated on disjoint sets.

Let $\mu$ be a signed measure. 
The \textbf{Hahn decomposition theorem} says that there exists a partition $S = S_+ \cup S_-$ such that $\mu|_{S_+} \geq 0$ and $\mu|_{S_-} \leq 0$, $S_+,S_-\in\Sigma$ .
The \textbf{Jordan decomposition theorem} says that there exist unique mutually singular positive measures $\mu^+$ and $\mu^-$ such that $\mu=\mu^+-\mu^-$. These can be taken as $\mu^+=\mu|_{S_+}$ and $\mu^-=-\mu|_{S_-}$, or equivalently defined as $\mu^+(B)=\sup\{\mu(A) : A\subset B, A\in \Sigma\}$ and $-\mu^-(B)=\inf\{\mu(A) : A\subset B, A\in \Sigma\}$.
The total variation of a signed measure $\mu$ is the positive measure defined by $\abs\mu=\mu^++\mu^-$. The variation norm of $\mu$ is the finite real number $\norm\mu=\abs\mu(S)$.

The set $\Mc(S)$ of finite signed measures over $(S,\Sigma)$ can be naturally given the structure of a real vector space. Endowed with the total variation norm, it is a Banach space. The zero element is the \emph{null measure}, denoted by $0$, which assigns the zero value to every measurable subset of $S$. Consider the partial order $\leq$ on $\Mc(S)$ defined by 
\begin{equation}\label{eq:order-measures}
    \mu\leq\nu\quad\text{if } \mu(A)\leq \nu(A) \text{ for every } A\in\Sigma.
\end{equation}
Then equipped with $\leq$, $\Mc(S)$ is a Banach lattice \cite[Section 1.1]{MeyerNiebergBook1991}. The $\wedge$ and $\vee$ operators are given by 
\begin{equation}\label{eq:formula-vee-wedge-measures}
    (\mu \vee \nu) (A) = \sup_{A=B\cup C}  \mu(B) + \nu(C), \quad (\mu \wedge \nu) (A) = \inf_{A=B\cup C}  \mu(B) + \nu(C),
\end{equation}
where the supremum and infimum above run over $B,C\in\Sigma$ satisfying $B\cup C=A$ and $B\cap C=\emptyset$.
The positive and negative parts $\mu^+$ and $\mu^-$ defined above then coincide with the Banach lattice objects $\mu^+=\mu\vee 0$ and $\mu^-=-(\mu\wedge 0)$. 
\end{example}

The examples of continuous functions and signed measures described above can be connected by duality. Indeed, when $X$ is a Banach lattice, its dual space $X^*$ (the set of continuous linear forms) can be naturally equipped with the \emph{dual order}, still denoted by $\leq$ and defined for $\mu,\nu\in X^*$ by 
\begin{equation}\label{eq:dual-order}
    \mu\leq\nu \quad\text{if} \,\bracket{\mu,\phi}\leq \bracket{\nu,\phi} \text{ for any } \phi\in X, \phi\geq 0.
\end{equation}
Recall also that the dual norm is given by $\norm{\mu}\coloneqq \sup\{\bracket{\mu,\phi} : \phi\in X,\norm{\phi}\leq 1\}$. Then:

\begin{proposition}[{\cite[Prop. 1.3.7]{MeyerNiebergBook1991},\cite[Chap. 11, Prop 5.5.]{SchaeferBook1974}}]
    Equipped with the dual norm and the dual order, $X^*$ is a Banach lattice.
\end{proposition}

To relate continuous functions and signed measures using the above result, let $\Omega$ be a compact metric space, and denote by $C(\Omega)$ the Banach lattice of continuous functions over $\Omega$ and by $\Mc(\Omega)$ the Banach lattice of finite signed Borel measures over $\Omega$. By the Riesz representation theorem, $\Mc(\Omega)$ can be identified with the dual Banach space of $C(\Omega)$, via the mapping of $\mu\in \Mc(\Omega)$ to $(\phi\mapsto\int_\Omega \phi\,d\mu)\in C(\Omega)^*$ \cite[Theorem 7.10.4.]{BogachevBook2007}. In fact, we have 

\begin{theorem}
    Let $\Omega$ be a compact metric space. Then $\Mc(\Omega)$ and $C(\Omega)^*$ can be identified as Banach lattices.
\end{theorem}
\begin{proof}
    Since $\Mc(\Omega)$ and $C(\Omega)^*$ can be identified as Banach spaces by the Riesz representation theorem, all is left to show is that the two orderings \eqref{eq:order-measures} and \eqref{eq:dual-order} respectively defined on $\Mc(\Omega)$ and $C(\Omega)^*$ coincide. By linearity it is sufficient to show that a signed measure $\mu\in\Mc(\Omega)$ is positive if and only if $\int_\Omega \phi\,d\mu \geq 0$ for every nonnegative continuous function $\phi$. The ``only if'' part is immediate. The ``if'' part follows from the following considerations. First, since a compact metric space is complete and separable, every Borel measure is Radon \cite[Theorem 7.1.7]{BogachevBook2007}. Second, the evaluation of Radon measures can be approximated by continuous functions \cite[Lemma 7.2.8]{BogachevBook2007}: for any Borel set $A\subset \Omega$ and any $\eps>0$ there exists a continuous function $\phi\colon\Omega\to[0,1]$ such that $\abs{\mu(A)-\int_\Omega\phi\,d\mu}<\eps$. This implies the bound $\mu(A)>-\eps$ for all $\eps>0$, thus the desired result. 
\end{proof}

\subsection{Submodularity} \label{sec:def-submodular}

Throughout the text, $X=(X,\norm{\cdot},\leq)$ denotes a Banach lattice and $\wedge$ and $\vee$ denote the corresponding inf and sup operations. 

\begin{definition}
A function $E\colon X\to\R\cup\{+\infty\}$ is \emph{submodular} if for every $\phi_1,\phi_2 \in X$ we have
\begin{equation}\label{eq:def-submodular}
     E(\phi_1 \wedge \phi_2)+E(\phi_1 \vee \phi_2) \leq E(\phi_1)+E(\phi_2).
\end{equation}

\end{definition}

When minimizing a submodular function, we therefore see that $\phi_1 \wedge \phi_2$ and $\phi_1 \vee \phi_2$ provide competitors to $\phi_1$ and $\phi_2$ which produce a lower total cost. Note also that submodularity does not depend on the norm used on $X$. We shall refer to \eqref{eq:def-submodular} as the \emph{submodularity inequality}.

For sufficiently smooth functions in finite dimensions, submodularity can be characterized by a differential condition. The next result is set on the standard lattice $\R^n$ with the coordinate order : $x\leq y$ if and only if $x_i\leq y_i$ for every $i=1,\dots, n$. This is a lattice order for which the lattice operations are given by the coordinate-wise minimum and maximum:  
\begin{equation*}
    (x \wedge y)_i = \min(x_i,y_i), \quad (x \vee y)_i = \max(x_i,y_i).
\end{equation*}

\begin{lemma}\label{lemma:cross-derivatives}
    Take $X = (\R^n,\leq )$. Then a function $E\in C^2(\R^n)$ is submodular if and only if 
    \begin{equation*}
        \frac{\partial^2 E}{\partial x_i\partial x_j}(x)\leq 0\quad\text{for all } i\neq j, \text{ and all }x\in\R^n.
    \end{equation*}
\end{lemma}
\begin{proof}
Assume first that $E$ is submodular. Take $1 \leq i\neq j \leq n$ and denote by $e_i,e_j$ the corresponding elements of the canonical basis of $\R^n$. Given $x \in \mathbb{R}^n$, we have 
\begin{equation}\label{eq:proof-lemma:cross-derivatives}
    \frac{\partial^2 E}{\partial x_i\partial x_j}(x) = \lim_{\eps \to 0^+} \frac{E(x+\eps(e_i+e_j)) + E(x) - E(x+\eps e_i) - E(x+\eps e_j)}{\eps^2}.
\end{equation}
Note that $x+\eps(e_i+e_j)=(x+\eps e_i) \vee (x+\eps e_j)$ and $x=(x+\eps e_i) \wedge (x+\eps e_j)$ when $i \neq j$ and $\eps>0$. By submodularity of $E$, we deduce nonpositivity of the right-hand side of \eqref{eq:proof-lemma:cross-derivatives}.

Assume now that $\frac{\partial^2 E}{\partial x_i\partial x_j}(x) \leq 0$ for all $i \neq j$ and $x\in \R^n$. Let $a,b \in \R^n$ be such that $a\leq b$ and $a_i = b_i$, then
\begin{equation*}
    \frac{\partial E}{\partial x_i}(a) - \frac{\partial E}{\partial x_i}(b) = \int_0^1 \sum_{j\neq i} \frac{\partial^2 E}{\partial x_i\partial x_j}(a+t(b-a))(b_j-a_j) dt \leq 0
\end{equation*}
since $\frac{\partial^2 E}{\partial x_i\partial x_j}(x) \leq 0$ for all $x$, $b_j \geq a_j$ for $j\neq i$ and $a_i = b_i$. Let $x,y \in \R^n$, then 
\begin{align*}
    E(x\vee y) - E(x) &= \int_0^1 \langle \nabla E(x + t(x\vee y -x)), x\vee y - x\rangle dt\\
    &= \int_0^1 \langle \nabla E(x + t(y-x\wedge y)), y-x\wedge y\rangle dt\\
    &\leq \int_0^1 \langle \nabla E(x\wedge y + t(y-x\wedge y)), y-x\wedge y\rangle dt\\
    &= E(y) - E(x\wedge y).
\end{align*}
The second equality holds because $x+y = x\wedge y + x\vee y$, the inequality holds by the claim above because $x\wedge y + t(y-x\wedge y) \leq x + t(y-x\wedge y)$ and $(x\wedge y + t(y-x\wedge y))_i = (x + t(y-x\wedge y))_i$ when $(y - x\wedge y)_i > 0$. Rearranging the terms exactly gives submodularity of $E$.
\end{proof}

A prototypical example of a submodular function in $\R^n$ is then given by a quadratic function with negative cross-derivatives.

\begin{example}
    Let $A \in \R^{n\times n}$ be a symmetric matrix with non-positive off-diagonal entries: $A_{ij} \leq 0$ for every $i \neq j$. Then the function $E\colon (x\in \R^n) \mapsto \frac{1}{2} x^T A x$ is submodular. This is a direct consequence of \cref{lemma:cross-derivatives}.
\end{example}

It turns out to be useful to break the symmetry between $\phi_1$ and $\phi_2$ in \eqref{eq:def-submodular} to define a relation between functions defined on $X$.

\begin{definition}\label{def:P-order}
    Consider two functions $E_1,E_2\colon X\to\R\cup\{+\infty\}$. We say that $E_1$ is P-dominated by $E_2$, and write $E_1\leP E_2$, if for every $\phi_1,\phi_2 \in X$ we have
    \begin{equation}\label{eq:def-P-order}
         E_1(\phi_1 \wedge \phi_2)+E_2(\phi_1 \vee \phi_2) \leq E_1(\phi_1)+E_2(\phi_2).
    \end{equation}
\end{definition}

We use the notation $E_1\leP E_2$  because P-dominance does not define a partial order in general.  
When specializing \eqref{eq:def-P-order} to indicator functions of sets (i.e. functions only taking the values $0$ and $+\infty$) we obtain a relation on sets.

\begin{definition}\label{def:P-order-sets}
    Consider two sets $A_1,A_2 \subset X$. We say that $A_1$ is P-dominated by $A_2$, and write $A_1\leP A_2$, if for every $\phi_1 \in A_1, \phi_2 \in A_2$ we have $\phi_1\wedge \phi_2 \in A_1$ and $\phi_1 \vee \phi_2 \in A_2$. 
\end{definition}

This notion is also known as the strong set order, or the Veinott order \cite[Section 2]{Topkis1978}. An elementary example of P-dominance which will be useful for our applications involves the duality bracket, and in some sense reverses inequalities. Recall that the dual Banach space $X^*$ is equipped with its natural Banach lattice structure, see~\eqref{eq:dual-order}.

\begin{lemma}\label{lemma:duality-bracket-P-dominance}
    Let $\mu_1,\mu_2 \in X^\ast$ be such that $\mu_1 \leq \mu_2$. Then $\langle \mu_2, \cdot \rangle$ is P-dominated by $\langle \mu_1, \cdot \rangle$, i.e.\ $\langle \mu_2, \cdot \rangle \leP \langle \mu_1, \cdot \rangle$.
\end{lemma}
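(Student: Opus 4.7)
The plan is to unpack the definitions and reduce the inequality to a single nonnegativity statement that follows directly from the dual order.

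First, I would write out what $\langle \mu_2, \cdot\rangle \leP \langle \mu_1, \cdot\rangle$ means according to \Cref{def:P-order}: for every $\phi_1, \phi_2 \in X$ one must have
\begin{equation*}
    \langle \mu_2, \phi_1 \wedge \phi_2\rangle + \langle \mu_1, \phi_1 \vee \phi_2 \rangle \leq \langle \mu_2, \phi_1\rangle + \langle \mu_1, \phi_2\rangle.
\end{equation*}
By linearity of the duality bracket, this is equivalent to
\begin{equation*}
    \langle \mu_1, \phi_1 \vee \phi_2 - \phi_2 \rangle - \langle \mu_2, \phi_1 - \phi_1 \wedge \phi_2 \rangle \leq 0.
\end{equation*}

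The next step is to invoke the standard Banach lattice identity $\phi_1 + \phi_2 = \phi_1 \wedge \phi_2 + \phi_1 \vee \phi_2$, which yields
\begin{equation*}
    \phi_1 \vee \phi_2 - \phi_2 \;=\; \phi_1 - \phi_1 \wedge \phi_2 \;=:\; \psi.
\end{equation*}
Since $\phi_1 \wedge \phi_2 \leq \phi_1$, this element $\psi$ is nonnegative. The inequality we must prove thus collapses to
\begin{equation*}
    \langle \mu_1 - \mu_2, \psi \rangle \leq 0, \qquad \text{i.e.,} \qquad \langle \mu_2 - \mu_1, \psi \rangle \geq 0.
\end{equation*}
This is immediate: by hypothesis $\mu_1 \leq \mu_2$, which by the definition \eqref{eq:dual-order} of the dual order means $\langle \mu_2 - \mu_1, \phi\rangle \geq 0$ for every $\phi \geq 0$, and we just noted $\psi \geq 0$.

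There is no real obstacle here — the only thing one needs to recognize is the lattice identity $\phi_1 + \phi_2 = \phi_1 \wedge \phi_2 + \phi_1 \vee \phi_2$, which matches up the two ``defect'' quantities coming from the wedge and the vee and lets the whole statement pivot on the single nonnegative element $\psi$. This also explains, at an intuitive level, why taking the dual reverses the direction of P-dominance: the sign of $\mu$ that appears on the vee-side is flipped relative to the wedge-side, so $\mu_1 \leq \mu_2$ in $X^*$ corresponds to $\langle \mu_2,\cdot\rangle$ being \emph{dominated} by $\langle \mu_1, \cdot\rangle$ rather than the reverse.
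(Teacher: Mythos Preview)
Your proof is correct and follows essentially the same approach as the paper's own proof: both hinge on the lattice identity $\phi_1+\phi_2=\phi_1\wedge\phi_2+\phi_1\vee\phi_2$ to match the two defect terms, and then appeal to the dual order on the resulting nonnegative element. The only cosmetic difference is that the paper argues forward from the dual-order inequality applied to $(\phi_1\vee\phi_2)-\phi_1\geq 0$ and then rearranges, whereas you start from the target P-dominance inequality and reduce it to $\langle \mu_1-\mu_2,\psi\rangle\leq 0$; the content is identical.
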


\begin{proof}
    Let $\phi_1,\phi_2 \in X$. By definition of the dual order we have 
    \begin{equation*}
        \langle \mu_1, (\phi_1\vee\phi_2)  - \phi_1\rangle  \leq \langle \mu_2, (\phi_1\vee\phi_2)  - \phi_1\rangle,
    \end{equation*}
    since $(\phi_1\vee\phi_2) - \phi_1 \geq 0$. Using that  $(\phi_1\vee\phi_2) - \phi_1 = \phi_2 - (\phi_1\wedge\phi_2) $ the above inequality can be rewritten as
    \begin{equation*}
        \langle \mu_1, (\phi_1\vee\phi_2)  - \phi_1\rangle  \leq \langle \mu_2, \phi_2 - (\phi_1\wedge\phi_2) \rangle.
    \end{equation*}
    Finally, rearranging terms gives $\langle \mu_2, \cdot \rangle \leP \langle \mu_1, \cdot \rangle$.
\end{proof}

Another elementary observation is that certain monotonicity properties can be expressed in terms of P-dominance. Recall that a function $E$ is \emph{order-preserving} if $E(\phi_1)\leq E(\phi_2)$ whenever $\phi_1\leq\phi_2$. 

\begin{lemma}
    A function $E\colon X\to\R\cup\{+\infty\}$ is order-preserving if and only if $E \leP 0$, where $0$ denotes the identically zero function. 
\end{lemma}
\begin{proof}
    It is clear that an order-preserving function satisfies $E \leP 0$. Conversely, if $E(\phi_1\wedge\phi_2)\leq E(\phi_1)$, choosing $\phi_2\leq\phi_1$ implies the order-preserving inequality.
\end{proof}

The next result shows that P-dominance is stable under addition. Therefore the various examples described above (submodular functions, duality brackets, order-preserving functions) and in \cref{sec:examples-submodular} below can be easily combined into interesting functionals. 

\begin{lemma}\label{lemma:sum-stable-P}    
    Consider $E_i\colon X\to\R\cup\{+\infty\}$ for $1\leq i\leq 4$. If
    $E_1\leP E_3$ and $E_2\leP E_4$ then $E_1+E_2\leP E_3+E_4$. In particular, if $E_1$ and $E_2$ are submodular then $E_1+E_2$ is submodular.
\end{lemma}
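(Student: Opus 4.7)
The plan is to prove P-dominance of the sum by simply adding the two given P-dominance inequalities, and then to deduce the submodularity claim as a trivial specialization.

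Fix arbitrary $\phi_1,\phi_2\in X$. From $E_1\leP E_3$ applied to the pair $(\phi_1,\phi_2)$ I get
\begin{equation*}
    E_1(\phi_1\wedge\phi_2)+E_3(\phi_1\vee\phi_2)\leq E_1(\phi_1)+E_3(\phi_2),
\end{equation*}
and from $E_2\leP E_4$ applied to the same pair I get
\begin{equation*}
    E_2(\phi_1\wedge\phi_2)+E_4(\phi_1\vee\phi_2)\leq E_2(\phi_1)+E_4(\phi_2).
\end{equation*}
Adding these two inequalities termwise and regrouping yields exactly the definition \eqref{eq:def-P-order} for $E_1+E_2\leP E_3+E_4$.

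The only point that needs a word of care is that the functions take values in $\R\cup\{+\infty\}$, so I want to make sure the additions make sense. The convention is the usual one: $a+(+\infty)=+\infty$ for any $a\in\R\cup\{+\infty\}$. Under this convention, if either $(E_1+E_2)(\phi_1)$ or $(E_3+E_4)(\phi_2)$ is $+\infty$ the inequality is trivial; otherwise all eight quantities appearing in the two inequalities are finite and the termwise addition is unambiguous. No sign cancellation occurs since everything is being added, not subtracted.

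For the particular case, note that by definition $E$ is submodular if and only if $E\leP E$. Thus if $E_1$ and $E_2$ are both submodular then $E_1\leP E_1$ and $E_2\leP E_2$; applying the first part with $E_3=E_1$ and $E_4=E_2$ gives $E_1+E_2\leP E_1+E_2$, i.e.\ $E_1+E_2$ is submodular. There is no real obstacle here: the entire content of the lemma is that P-dominance is defined by a one-sided inequality whose right- and left-hand sides are both additive in $(E_1,E_3)$, so additivity is automatic.
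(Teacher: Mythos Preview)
Your proof is correct and follows exactly the same approach as the paper: apply the P-dominance inequality \eqref{eq:def-P-order} to each pair and add. The paper's proof is a single sentence (``direct consequence of \Cref{def:P-order}''), so your extra care with the $+\infty$ convention is just a harmless elaboration of the same argument.
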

\begin{proof}
    This is a direct consequence of \Cref{def:P-order} applied to the pairs $(E_1,E_3)$ and $(E_2,E_4)$.
\end{proof}
By definition, submodularity is a particular case of P-dominance. Conversely, P-dominance may arise in the presence of joint submodularity. Here, joint submodularity always refers to the product lattice structure. When $X$ and $Y$ are Banach lattices, we always endow $X\times Y$ with the natural Banach lattice structure given by the usual product norm and the following product order: given $(\phi_1,\psi_1)$ and $(\phi_2,\psi_2)\in X\times Y$, $(\phi_1,\psi_1)\leq (\phi_2,\psi_2)\iff \phi_1\leq\phi_2$ and $\psi_1\leq\psi_2$. Then $(\phi_1,\psi_1)\wedge(\phi_2,\psi_2)=(\phi_1\wedge\phi_2, \psi_1\wedge\psi_2)$ and $(\phi_1,\psi_1)\vee(\phi_2,\psi_2)=(\phi_1\vee\phi_2, \psi_1\vee\psi_2)$.

\begin{lemma}\label{lemma:P-order-joint-subm}
    Let $X$ and $Y$ be two Banach lattices and consider a function $E\colon X\times Y\to\R\cup\{+\infty\}$ which is jointly submodular. Suppose that $\phi_1,\phi_2\in X$ are such that $\phi_1\leq\phi_2$. Then $E(\phi_1,\cdot)\leP E(\phi_2,\cdot)$. 
\end{lemma}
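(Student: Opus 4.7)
The plan is to unpack both definitions and observe that the claim reduces to a one-line application of joint submodularity once we use the hypothesis $\phi_1 \leq \phi_2$.

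First, I would recall what needs to be shown. By \Cref{def:P-order} applied to the functions $E_1 = E(\phi_1, \cdot)$ and $E_2 = E(\phi_2, \cdot)$ on $Y$, the statement $E(\phi_1,\cdot)\leP E(\phi_2,\cdot)$ means that for every $\psi_1, \psi_2 \in Y$,
\begin{equation*}
    E(\phi_1, \psi_1 \wedge \psi_2) + E(\phi_2, \psi_1 \vee \psi_2) \leq E(\phi_1, \psi_1) + E(\phi_2, \psi_2).
\end{equation*}

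Next, I would fix arbitrary $\psi_1, \psi_2 \in Y$ and apply joint submodularity of $E$ to the pair $(\phi_1, \psi_1), (\phi_2, \psi_2) \in X \times Y$. This yields
\begin{equation*}
    E\bigl((\phi_1,\psi_1)\wedge(\phi_2,\psi_2)\bigr) + E\bigl((\phi_1,\psi_1)\vee(\phi_2,\psi_2)\bigr) \leq E(\phi_1,\psi_1) + E(\phi_2,\psi_2).
\end{equation*}
By the formulas for meet and join in the product lattice structure on $X \times Y$ recalled just above the lemma, the left-hand side equals $E(\phi_1 \wedge \phi_2, \psi_1 \wedge \psi_2) + E(\phi_1 \vee \phi_2, \psi_1 \vee \psi_2)$.

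Finally, I would invoke the hypothesis $\phi_1 \leq \phi_2$ to simplify $\phi_1 \wedge \phi_2 = \phi_1$ and $\phi_1 \vee \phi_2 = \phi_2$, which turns the inequality into exactly the desired P-dominance inequality above. There is no real obstacle here; the lemma is essentially a tautological consequence of the product lattice structure together with the definitions of joint submodularity and P-dominance. The only small point worth being careful about is writing the product meet/join correctly, which is already established in the paragraph preceding the lemma.
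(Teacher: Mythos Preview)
Your proposal is correct and follows essentially the same approach as the paper's proof: apply joint submodularity to $(\phi_1,\psi_1)$ and $(\phi_2,\psi_2)$, use the product lattice structure, and then simplify via $\phi_1\leq\phi_2$.
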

\begin{proof}
    Let $\phi_1, \phi_2\in X$ and $\psi_1,\psi_2\in Y$. By joint submodularity of $E$ we have 
    \begin{equation}\label{eq:proof-lemma:P-order-joint-subm-1}
    E(\phi_1\wedge\phi_2,\psi_1\wedge\psi_2)+
    E(\phi_1\vee\phi_2,\psi_1\vee\psi_2)\leq
    E(\phi_1,\psi_1)+E(\phi_2,\psi_2).
    \end{equation}
    Now assume that $\phi_1\leq\phi_2$. Then $\phi_1\wedge\phi_2=\phi_1$ and $\phi_1\vee\phi_2=\phi_2$, so that \eqref{eq:proof-lemma:P-order-joint-subm-1} turns into the desired inequality.
\end{proof}

P-dominance is also preserved by certain minimization procedures.

\begin{lemma}\label{lemma:P-order-min}
    Let $X$ and $Y$ be two Banach lattices and consider two functions $F_1,F_2\colon X\times Y\to \R\cup\{+\infty\}$ bounded from below and jointly submodular. Define for $i=1,2$ the functions $E_i\colon X\to\R\cup\{+\infty\}$ by 
    \[
    E_i(\phi)=\inf_{\psi\in Y} F_i(\phi,\psi).
    \]
    Suppose that $F_1\leP F_2$. Then $E_1\leP E_2$.
    
    As a direct consequence, if $F_1$ is jointly submodular then the marginal function $E_1$ is also submodular.
\end{lemma}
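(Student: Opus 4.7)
The plan is to exploit the product Banach lattice structure on $X\times Y$ and apply the hypothesis $F_1\leP F_2$ directly to well chosen near-minimizers. Fix $\phi_1,\phi_2\in X$ and assume $E_1(\phi_1)$ and $E_2(\phi_2)$ are finite (otherwise the right hand side of the P-dominance inequality is $+\infty$ and there is nothing to prove). For $\varepsilon>0$, pick $\psi_1,\psi_2\in Y$ such that $F_1(\phi_1,\psi_1)\leq E_1(\phi_1)+\varepsilon$ and $F_2(\phi_2,\psi_2)\leq E_2(\phi_2)+\varepsilon$.

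Next I apply $F_1\leP F_2$ to the pair $(\phi_1,\psi_1),(\phi_2,\psi_2)\in X\times Y$. Using the formula for the meet and join in the product lattice, this gives
\[
F_1(\phi_1\wedge\phi_2,\psi_1\wedge\psi_2)+F_2(\phi_1\vee\phi_2,\psi_1\vee\psi_2)\leq F_1(\phi_1,\psi_1)+F_2(\phi_2,\psi_2).
\]
The left hand side is bounded below by $E_1(\phi_1\wedge\phi_2)+E_2(\phi_1\vee\phi_2)$ by definition of the infima, while the right hand side is bounded above by $E_1(\phi_1)+E_2(\phi_2)+2\varepsilon$. Letting $\varepsilon\to 0$ yields $E_1\leP E_2$.

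For the final assertion, notice that a jointly submodular function $F_1$ satisfies $F_1\leP F_1$ by definition, so the conclusion $E_1\leP E_1$, which is exactly the submodularity of the marginal $E_1$, follows from the first part applied with $F_2=F_1$.

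Nothing in the argument is genuinely delicate; the only point to be careful about is that joint submodularity is used to produce the P-dominance on $X\times Y$ (implicitly via the hypothesis $F_1\leP F_2$ or, in the corollary case, via Lemma on sum-stability applied trivially), and that we must handle possibly unattained infima through $\varepsilon$-minimizers. The boundedness from below of $F_1,F_2$ ensures that $E_i$ takes values in $\R\cup\{+\infty\}$, so the formal inequality in Definition of P-dominance makes sense.
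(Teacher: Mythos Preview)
Your proof is correct and follows essentially the same approach as the paper's: apply $F_1\leP F_2$ on the product lattice and bound the left-hand side from below by the marginal infima. The only cosmetic difference is that the paper takes arbitrary $\psi_1,\psi_2\in Y$ and passes to the infimum at the end, whereas you pick $\varepsilon$-minimizers and let $\varepsilon\to 0$; these are equivalent formulations of the same argument.
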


\begin{proof}
    Let $\phi_1,\phi_2\in X$ and $\psi_1,\psi_2\in Y$. Since $F_1\leP F_2$ we have 
    \begin{equation}\label{eq:lemma:P-order-min-1}
        F_1(\phi_1\wedge\phi_2,\psi_1\wedge\psi_2)+
    F_2(\phi_1\vee\phi_2,\psi_1\vee\psi_2)\leq
    F_1(\phi_1,\psi_1)+F_2(\phi_2,\psi_2).
    \end{equation}
    The left-hand side of \eqref{eq:lemma:P-order-min-1} can be bounded below by the respective infima of $F_1$ and $F_2$ over $Y$, giving 
    \[
    E_1(\phi_1\wedge\phi_2)+E_2(\phi_1\vee\phi_2)\leq F_1(\phi_1,\psi_1)+F_2(\phi_2,\psi_2).
    \]
    Since $\psi_1$ and $\psi_2$ are arbitrary this in turn implies $E_1(\phi_1\wedge\phi_2)+E_2(\phi_1\vee\phi_2)\leq E_1(\phi_1)+E_2(\phi_2)$.
\end{proof}

The next two results form the backbone of our approach to the comparison principles based on submodular energies. The first one transfers P-dominance from functions to their minimizers.

\begin{proposition}\label{prop:topkis}
    Let $E_1,E_2\colon X\to\R\cup\{+\infty\}$. If $E_1 \leP E_2$ then $\argmin E_1 \leP \argmin E_2$. 
\end{proposition}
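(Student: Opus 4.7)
The proof is essentially a one-line application of the P-dominance inequality, so my plan is to set it up carefully rather than add unnecessary machinery. The whole argument is a chain of two inequalities that must collapse into equalities.

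First I would observe that if either $\argmin E_1$ or $\argmin E_2$ is empty, then the conclusion $\argmin E_1 \leP \argmin E_2$ holds vacuously by \Cref{def:P-order-sets}. So I may assume both argmin sets are nonempty. Fix arbitrary $\phi_1 \in \argmin E_1$ and $\phi_2 \in \argmin E_2$, and set $m_i = E_i(\phi_i) = \inf E_i$, which are finite real numbers (the values $+\infty$ cannot be attained at a minimizer unless $E_i \equiv +\infty$, in which case $\argmin E_i$ is either empty or all of $X$, and the latter case is trivial).

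Next I would apply the hypothesis $E_1 \leP E_2$ directly to the pair $(\phi_1,\phi_2)$ to obtain
\begin{equation*}
    E_1(\phi_1 \wedge \phi_2) + E_2(\phi_1 \vee \phi_2) \leq E_1(\phi_1) + E_2(\phi_2) = m_1 + m_2.
\end{equation*}
On the other hand, by definition of $m_1$ and $m_2$ as infima,
\begin{equation*}
    E_1(\phi_1 \wedge \phi_2) \geq m_1 \quad \text{and} \quad E_2(\phi_1 \vee \phi_2) \geq m_2.
\end{equation*}

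The final step is to notice that the sum of the two lower bounds already equals the upper bound $m_1 + m_2$, so both lower bounds must be attained with equality. Hence $E_1(\phi_1 \wedge \phi_2) = m_1$ and $E_2(\phi_1 \vee \phi_2) = m_2$, which means $\phi_1 \wedge \phi_2 \in \argmin E_1$ and $\phi_1 \vee \phi_2 \in \argmin E_2$. This is exactly the definition of $\argmin E_1 \leP \argmin E_2$ in \Cref{def:P-order-sets}. There is no real obstacle here; the only thing to be careful about is handling the degenerate cases where an argmin is empty, and checking that the proof does not secretly require finiteness or uniqueness assumptions beyond what is given.
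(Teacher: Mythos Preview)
Your proof is correct and follows essentially the same approach as the paper: handle the empty case vacuously, apply the P-dominance inequality to a pair of minimizers, and combine with the trivial lower bounds $E_i(\cdot)\geq E_i(\phi_i)$ to force equality. The only difference is cosmetic---you spell out the squeeze argument and the degenerate $E_i\equiv+\infty$ case explicitly, whereas the paper leaves these implicit.
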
 

\begin{proof}
    If either of $ \argmin E_1$, $ \argmin E_2$ is empty the result holds vacuously. Otherwise, take $\phi_1 \in \argmin E_1$ and $\phi_2 \in \argmin E_2$. Since $E_1\leP E_2$ we have
    \begin{equation*}
        E_1(\phi_1 \wedge \phi_2) + E_2(\phi_1 \vee \phi_2) \leq E_1(\phi_1) + E_2(\phi_2).
    \end{equation*}
    Since $E_1(\phi_1) \leq E_1(\phi_1 \wedge \phi_2)$ and $E_2(\phi_2) \leq E_2(\phi_1 \vee \phi_2)$ we find that  $\phi_1 \wedge \phi_2 \in \argmin E_1$ and $\phi_1 \vee \phi_2 \in \argmin E_2$.
\end{proof}

This comparison between optimizers of energies is an extension of the celebrated Topkis' theorem in economics \cite{Topkis1978}. 
Finally, the next result says that P-dominance reduces to the order on $X$ for singletons.

\begin{lemma}\label{lemma:P-singletons}
    Let $\phi_1,\phi_2 \in X$. Then $\{\phi_1\} \leP \{\phi_2\}$ holds if and only if $\phi_1 \leq \phi_2$.
\end{lemma}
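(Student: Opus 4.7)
The plan is essentially to unfold the definition of P-dominance for singletons and reduce the equivalence to a standard lattice identity. Since $\{\phi_1\}$ has the unique element $\phi_1$ and $\{\phi_2\}$ has the unique element $\phi_2$, \Cref{def:P-order-sets} applied to these singletons says that $\{\phi_1\} \leP \{\phi_2\}$ is equivalent to the two membership conditions $\phi_1 \wedge \phi_2 \in \{\phi_1\}$ and $\phi_1 \vee \phi_2 \in \{\phi_2\}$, namely
\begin{equation*}
    \phi_1\wedge\phi_2=\phi_1 \quad\text{and}\quad \phi_1\vee\phi_2=\phi_2.
\end{equation*}

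The rest of the argument invokes the basic lattice fact that each of the two equalities above is in turn equivalent to $\phi_1 \leq \phi_2$. Indeed, if $\phi_1\leq\phi_2$ then $\phi_1$ is a lower bound of $\{\phi_1,\phi_2\}$, and it is obviously the greatest such, so $\phi_1\wedge\phi_2=\phi_1$; conversely, if $\phi_1\wedge\phi_2=\phi_1$ then by definition of the infimum $\phi_1=\phi_1\wedge\phi_2\leq\phi_2$. The same reasoning applies to $\phi_1\vee\phi_2=\phi_2$.

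Putting the two steps together yields the claimed equivalence. There is no real obstacle here: the statement is essentially a sanity check that the notation $\leP$, which in general is not an order, still recovers the underlying lattice order $\leq$ when restricted to singletons. The only care needed is to remember that for singletons both the ``$\wedge$-condition'' and the ``$\vee$-condition'' in \Cref{def:P-order-sets} must be verified, but each individually already encodes $\phi_1\leq\phi_2$, so they are automatically consistent.
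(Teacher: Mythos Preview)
Your proof is correct and follows essentially the same approach as the paper's own proof: both unfold \Cref{def:P-order-sets} for singletons to obtain $\phi_1\wedge\phi_2=\phi_1$ (and $\phi_1\vee\phi_2=\phi_2$), and then invoke the basic lattice equivalence $\phi_1\wedge\phi_2=\phi_1\iff\phi_1\leq\phi_2$. The only difference is that the paper's version is terser, using just the $\wedge$-condition for the forward direction.
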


\begin{proof}
    From $\{\phi_1\} \leP \{\phi_2\}$ we deduce that $\phi_1 \wedge \phi_2 \in \{\phi_1\}$, or equivalently $\phi_1 \leq \phi_2$. The converse holds trivially since $\phi_1 \leq \phi_2$ implies that $\phi_1 \wedge \phi_2 = \phi_1$ and $\phi_1 \vee \phi_2 = \phi_2$.
\end{proof}

\subsubsection{Examples of submodular functions}\label{sec:examples-submodular}

In this section we provide several examples of submodular functions. First, we recall that a function $E$ valued in $\R \cup \{+\infty\}$ is \emph{proper} if it is not identically $+\infty$, and its domain $\dom E$ is the set of points where it is finite.
We start with ``elementary functions'' which can be combined into more complicated energies. 
One such important basic building block is given by the following example.

\begin{lemma}\label{lemma:convexdiff_submodular}
    Let $f \colon \R \to \R \cup \{+\infty\}$ be a proper convex function. Then the function 
    \begin{align*}
            E \colon \R^2 &\to \R\cup \{+\infty\} \\
            (a,b) &\mapsto f(a-b)
    \end{align*}
    is proper convex and submodular.
\end{lemma}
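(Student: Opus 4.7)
The plan is to prove properness and convexity by inspection, then reduce submodularity to the one-variable convexity of $f$ via an explicit case analysis.

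Properness is immediate: since $f$ is proper, there exists $c\in\dom f$, and then $F(c,0)=f(c)<+\infty$, so $F$ is proper. For convexity, note that $F = f\circ\ell$ where $\ell\colon(a,b)\mapsto a-b$ is linear; the composition of a convex function with an affine map is convex.

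The substantive step is submodularity. Given $(a_1,b_1),(a_2,b_2)\in\R^2$, we must verify
\begin{equation*}
f\bigl(\min(a_1,a_2)-\min(b_1,b_2)\bigr)+f\bigl(\max(a_1,a_2)-\max(b_1,b_2)\bigr)\leq f(a_1-b_1)+f(a_2-b_2).
\end{equation*}
By symmetry I may assume $a_1\leq a_2$. If also $b_1\leq b_2$, the two terms on the left are just $f(a_1-b_1)$ and $f(a_2-b_2)$, so equality holds. The only nontrivial case is $b_2\leq b_1$, in which the left-hand side becomes $f(a_1-b_2)+f(a_2-b_1)$.

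Setting $x_i = a_i-b_i$ and $t = b_1-b_2\geq 0$, one checks by direct computation that $a_1-b_2 = x_1+t$ and $a_2-b_1 = x_2-t$, so the inequality to prove reduces to
\begin{equation*}
f(x_1+t)+f(x_2-t)\leq f(x_1)+f(x_2).
\end{equation*}
Now since $a_1\leq a_2$ and $b_2\leq b_1$, one has $x_1\leq x_2$, and moreover $0\leq t\leq \min(a_2-a_1,\,b_1-b_2)$ which forces $x_1\leq x_1+t\leq x_2$ and $x_1\leq x_2-t\leq x_2$. Writing $x_1+t = (1-\alpha)x_1+\alpha x_2$ with $\alpha = t/(x_2-x_1)\in[0,1]$ (the case $x_1=x_2$ being trivial since then $t=0$), the identity $(x_1+t)+(x_2-t)=x_1+x_2$ forces $x_2-t = \alpha x_1+(1-\alpha)x_2$. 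Applying convexity of $f$ to each of the two convex combinations and adding yields $f(x_1+t)+f(x_2-t)\leq f(x_1)+f(x_2)$, which completes the proof.

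There is no real obstacle here; the only point that requires care is organizing the case split so that the min/max algebra reduces cleanly to a single one-dimensional convexity inequality. An alternative, more conceptual route would be to note that for $C^2$ functions submodularity on $\R^2$ is equivalent to nonpositive mixed partials, and $\partial_a\partial_b f(a-b) = -f''(a-b)\leq 0$; however the elementary argument above handles the general (possibly nonsmooth, extended-real-valued) case directly.
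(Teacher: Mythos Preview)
Your argument is essentially the paper's: the same case split ($a_1\leq a_2$, then $b_1\leq b_2$ trivial versus $b_2\leq b_1$), the same reduction to a pair of convex combinations with complementary weights, and the same summation. The substitution $x_i=a_i-b_i$, $t=b_1-b_2$ is just a cosmetic repackaging of the paper's computation.

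One slip to fix: the claim $0\leq t\leq \min(a_2-a_1,\,b_1-b_2)$ is false in general, since $t=b_1-b_2$ need not be bounded by $a_2-a_1$ (take $a_1=0$, $a_2=1$, $b_1=10$, $b_2=0$). What you actually need, and what is true, is $0\leq t\leq x_2-x_1=(a_2-a_1)+(b_1-b_2)$, which follows from $a_2-a_1\geq 0$. With that correction the containments $x_1+t,\,x_2-t\in[x_1,x_2]$ and the rest of the argument go through unchanged.
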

\begin{proof}
    $E$ is clearly a proper convex function over $\R^2$. Given $(a_1,b_1), (a_2, b_2)\in\dom E$ we then want to show that 
    \begin{equation}\label{eq:proof-lemma:convexdiff_submodular-1}
        E(a_1\wedge a_2,b_1\wedge b_2)+E(a_1\vee a_2,b_1\vee b_2) \leq E(a_1,b_1)+E(a_2,b_2).
    \end{equation}
    First, we suppose without loss of generality that $a_1\leq a_2$. Second, note that if $b_1\leq b_2$ then \eqref{eq:proof-lemma:convexdiff_submodular-1} is immediate. We therefore suppose that $b_2\leq b_1$. As a direct consequence, we have the inequality 
    \begin{equation*}
        a_1-b_1\leq (a_1\wedge a_2)-(b_1\wedge b_2)\leq a_2-b_2.
    \end{equation*}
    We deduce the existence of some $\lambda\in [0,1]$ such that $(a_1\wedge a_2)-(b_1\wedge b_2)=(1-\lambda)(a_1-b_1) + \lambda (a_2-b_2)$. By convexity of $f$ we have 
    \begin{equation*}
        f((a_1\wedge a_2)-(b_1\wedge b_2))\leq (1-\lambda) f(a_1-b_1) + \lambda f(a_2-b_2).
    \end{equation*}
    This is the same as 
    \begin{equation}\label{eq:proof-lemma:convexdiff_submodular-2}
        E(a_1\wedge a_2,b_1\wedge b_2)\leq (1-\lambda) E(a_1,b_1) + \lambda E(a_2,b_2).
    \end{equation}
    To conclude, note that $a_1\wedge a_2 + a_1\vee a_2 = a_1+a_2$ and $b_1\wedge b_2 + b_1\vee b_2 = b_1+b_2$ so that $(a_1\vee a_2)-(b_1\vee b_2)=\lambda (a_1-b_1) + (1-\lambda)(a_2-b_2)$. By convexity of $f$ we find 
    \begin{equation}\label{eq:proof-lemma:convexdiff_submodular-3}
        E(a_1\vee a_2,b_1\vee b_2)\leq \lambda E(a_1,b_1) + (1-\lambda) E(a_2,b_2).
    \end{equation}
    Combining \eqref{eq:proof-lemma:convexdiff_submodular-2} and \eqref{eq:proof-lemma:convexdiff_submodular-3} gives \eqref{eq:proof-lemma:convexdiff_submodular-1}. 
\end{proof}

\begin{remark}
In the case where $f\in C^2(\R)$, the submodularity of $F$ directly follows from \cref{lemma:cross-derivatives} since 
\[
\frac{\partial^2 E}{\partial a \,\partial b}=-f''(a-b)\leq 0.
\]
\end{remark}

Another source of submodularity comes from concave functions of a sum. We state this general principle in the following form.

\begin{lemma}\label{lemma:concave-submodular}
    Consider a Polish space $\Omega$ equipped with a positive measure $\mu$ and let $X=L^1(\mu)$. Let $f\colon \R\to\R\cup\{-\infty\}$ be a concave function and define $E\colon (\phi\in L^1(\mu))\mapsto f\big(\int_\Omega \phi\,d\mu\big)$. Then $E$ is submodular.
\end{lemma}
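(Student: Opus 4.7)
The plan is to reduce the submodularity inequality to a one-variable concavity statement by integrating. Set $a_1 = \int_\Omega \phi_1\,d\mu$, $a_2 = \int_\Omega \phi_2\,d\mu$, $m = \int_\Omega (\phi_1\wedge\phi_2)\,d\mu$, and $M = \int_\Omega (\phi_1\vee\phi_2)\,d\mu$. The first step is to use the pointwise identity $(\phi_1\wedge\phi_2)(x)+(\phi_1\vee\phi_2)(x)=\phi_1(x)+\phi_2(x)$, valid almost everywhere, together with the monotonicity of the integral against the nonnegative measure $\mu$. This yields simultaneously the conservation law $m+M=a_1+a_2$ and the sandwich $m\leq a_1,a_2\leq M$.

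The second step is to observe that under these two constraints, $a_1$ and $a_2$ both lie in the interval $[m,M]$ and average to its midpoint (weighted by the correct barycentric coordinates). Explicitly, if $m<M$, I would set $\lambda = (M-a_1)/(M-m)\in[0,1]$; then a direct check gives $a_1=\lambda m+(1-\lambda)M$ and, using $a_2=m+M-a_1$, also $a_2=(1-\lambda)m+\lambda M$. Concavity of $f$ then produces
\begin{equation*}
    f(a_1)\geq \lambda f(m)+(1-\lambda)f(M),
    \qquad
    f(a_2)\geq (1-\lambda) f(m)+\lambda f(M),
\end{equation*}
and summing these two inequalities gives $f(m)+f(M)\leq f(a_1)+f(a_2)$, which is exactly $F(\phi_1\wedge\phi_2)+F(\phi_1\vee\phi_2)\leq F(\phi_1)+F(\phi_2)$. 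The degenerate case $m=M$ forces $m=a_1=a_2=M$ and the inequality is trivial.

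There is no real obstacle here: the only thing to be careful about is that the inf and sup operations on $L^1(\mu)$ are the pointwise (almost everywhere) min and max, so the identity $(\phi_1\wedge\phi_2)+(\phi_1\vee\phi_2)=\phi_1+\phi_2$ holds $\mu$-a.e.\ and integrates correctly against the nonnegative measure $\mu$. Everything else is a one-line application of concavity to two convex combinations of $m$ and $M$.
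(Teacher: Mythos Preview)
Your proof is correct and follows essentially the same approach as the paper: reduce to real numbers $m\leq a_1,a_2\leq M$ with $m+M=a_1+a_2$, write $a_1$ and $a_2$ as complementary convex combinations of $m$ and $M$, apply concavity of $f$ to each, and sum. The paper's version is slightly terser (it does not single out the degenerate case $m=M$ or write $\lambda$ explicitly), but the argument is the same.
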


\begin{proof}
    Fix $\phi_1, \phi_2\in L^1(\mu)$. By \cref{ex:lp-spaces-banach-lattice}, $L^1(\mu)$ is a Banach lattice and therefore $\phi_1\wedge\phi_2,\phi_1\vee\phi_2\in L^1(\mu)$. Since $\phi_1\wedge\phi_2\leq\phi_1\leq \phi_1 \vee\phi_2$ we have $\int_\Omega (\phi_1\wedge\phi_2)\,d\mu \leq \int_\Omega \phi_1\,d\mu \leq \int_\Omega (\phi_1\vee\phi_2)\,d\mu$, and there must exist some $\lambda\in [0,1]$ such that 
    \[
        \int_\Omega \phi_1\,d\mu = (1-\lambda) \int_\Omega (\phi_1\wedge\phi_2)\,d\mu +\lambda \int_\Omega (\phi_1\vee\phi_2)\,d\mu.
    \]
    By concavity of $f$ we deduce 
    \begin{equation}\label{eq:proof-lemma:concave-submod}
        f\left(\int_\Omega \phi_1\,d\mu\right) \geq (1-\lambda) f\left(\int_\Omega (\phi_1\wedge\phi_2)\,d\mu\right) + \lambda f\left(\int_\Omega (\phi_1\vee\phi_2)\,d\mu \right).
    \end{equation}
    This inequality may contain infinities, in which case the convention $0\cdot(-\infty)=0$ should be adopted \cite[Section 4]{Rockafellar1970}.
    Note that $\int_\Omega (\phi_1\wedge \phi_2)\,d\mu + \int_\Omega (\phi_1\vee\phi_2)\,d\mu$ is equal to $\int_\Omega \phi_1\,d\mu+\int_\Omega \phi_2\, d\mu$, thus
    \begin{equation*}
        \int_\Omega \phi_2\,d\mu = \lambda \int_\Omega (\phi_1\wedge\phi_2)\,d\mu +(1-\lambda) \int_\Omega (\phi_1\vee\phi_2)\,d\mu.
    \end{equation*}
    Using the concavity of $f$ grants once again an inequality similar to \Cref{eq:proof-lemma:concave-submod}, exchanging the roles of $\phi_1$ and $\phi_2$ and changing $\lambda$ into $1-\lambda$. Finally by summation we have
    \begin{equation*}
        f\left(\int_\Omega \phi_1\,d\mu\right) + f\left(\int_\Omega \phi_2\, d\mu\right)\geq f\left(\int_\Omega (\phi_1\wedge\phi_2)\,d\mu\right) + f\left(\int_\Omega (\phi_1\vee\phi_2)\,d\mu \right).
    \end{equation*}

\end{proof}
\begin{example}
    As a direct consequence of \Cref{lemma:concave-submodular}, we obtain the submodularity of the log-laplace transform  $ (\phi \in C(\Omega)) \mapsto \log\left(\int e^\phi d\mu \right)$, where $\Omega$ is a Polish space and $\mu$ is a positive measure over $\Omega$. 
    Indeed since the exponential is strictly increasing, it commutes with all the lattice operations. 
\end{example}

In the next example, the set $X=\R^I$ for an arbitrary set $I$ is merely a lattice rather than a Banach lattice since it is not endowed with any topology. As alluded to earlier, this poses no problem to consider submodularity which only depends on the lattice operations. Here the ordering on $\R^I$ is the natural pointwise ordering, and the lattice operations $\wedge$ and $\vee$ are given respectively by the pointwise minimum and maximum.

\begin{lemma}\label{lemma:max-submodular}
    Let $I$ be an arbitrary index set and consider the function
    \begin{align*}
        E\colon \R^I &\to \R\cup\{+\infty\}\\
        (a_i)_{i\in I} &\mapsto \sup_{i\in I} a_i.
    \end{align*}
    Then $E$ is submodular.
\end{lemma}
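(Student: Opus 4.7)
The plan is to verify the submodularity inequality directly using the simple identity $\min(s,t) + \max(s,t) = s + t$ for real numbers. Writing $M_a = \max_i a_i$ and $M_b = \max_i b_i$, the goal reduces to showing
\begin{equation*}
    \max_i (a_i \wedge b_i) + \max_i (a_i \vee b_i) \leq M_a + M_b.
\end{equation*}

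First I would handle the join term: since the supremum operation is compatible with pointwise maxima, $\max_i (a_i \vee b_i) = \max(M_a, M_b)$. Next I would bound the meet term by $\min(M_a, M_b)$. This uses that for every $i$ one has $a_i \wedge b_i \leq a_i \leq M_a$ and $a_i \wedge b_i \leq b_i \leq M_b$, so $a_i \wedge b_i \leq \min(M_a, M_b)$; taking the supremum over $i$ preserves this bound. Combining both estimates with $\min(M_a, M_b) + \max(M_a, M_b) = M_a + M_b$ closes the proof.

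There is no real obstacle here; the argument works for an arbitrary index set $I$ because the inequality $a_i \wedge b_i \leq \min(M_a, M_b)$ holds pointwise in $i$ and is stable under taking suprema, and because the join inequality is an equality. The only mild caveat is that the statement implicitly assumes the maxima are finite (otherwise one should read $\max$ as $\sup$ and interpret the inequality in $[-\infty, +\infty]$), but this does not affect the argument.
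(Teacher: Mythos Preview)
Your proof is correct and is essentially identical to the paper's: both compute $\max_i(a_i\vee b_i)=M_a\vee M_b$, bound $\max_i(a_i\wedge b_i)\leq M_a\wedge M_b$ (the paper phrases this as ``the maximum of the minimum is less than or equal to the minimum of the maximum''), and conclude via $\min(M_a,M_b)+\max(M_a,M_b)=M_a+M_b$.
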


\begin{proof}
    Let $(a_i)_{i\in I},(b_i)_{i\in I}\in \R^I$. From $\min(a_i,b_i)\leq a_i$ we find $\sup_{i\in I}\min(a_i,b_i) \leq \sup_{i\in I} a_i$, i.e.\ $E(a\wedge b)\leq E(a)$. Exchanging $a$ and $b$ gives $E(a\wedge b)\leq E(b)$, and therefore 
    \begin{equation}\label{eq:proof-lemma:max-submodular-1}
        E(a\wedge b)\leq \min(E(a),E(b)). 
    \end{equation}
    On the other hand,
    \begin{equation}\label{eq:proof-lemma:max-submodular-2}
        E(a\vee b)=\sup_{i\in I}\max(a_i,b_i)=\max(\sup_{i\in I}a_i, \sup_{i\in I} b_i)=\max(E(a),E(b)).
    \end{equation}
    Summing \eqref{eq:proof-lemma:max-submodular-1} and \eqref{eq:proof-lemma:max-submodular-2} gives the desired inequality.
\end{proof}

The examples presented until now are fundamental and can be used to show submodularity of functionals in a wide range of functional spaces. We will now focus on two examples in the Sobolev spaces $H^1(U)$ and $W^{s,p}(U)$ for $s \in (0,1)$, $p\in (1,\infty)$, where $U$ is an open bounded subset of $\R^n$ with regular boundary. The first such example is the Dirichlet energy, one of the simplest examples of a submodular functional on Sobolev spaces. 

\begin{example}\label{ex:dirichlet}
This example takes place over $H^1(U)$, which is not a Banach lattice due to the lack of compatibility between the order and the norm. However, $H^1(U)$ is a sublattice of the Banach lattice $L^2(U)$, in the sense that the inf and sup of two $H^1$ functions are still in $H^1(U)$ \cite[Theorem 2.1.11]{Ziemer}. For $\phi \in H^1(U)$, define the Dirichlet energy of $\phi$ as 
\begin{equation*}
    E(\phi) = \int_U \abs{\nabla \phi}^2.
\end{equation*}
Then $E$ is submodular. Indeed, let $\phi_1,\phi_2\in H^1(U)$. Then the weak derivatives of $\phi_1\wedge\phi_2$ and $\phi_1\vee\phi_2$ are given by \cite[Corollary 2.1.8]{Ziemer}
\begin{equation*}
    \nabla(\phi_1\wedge\phi_2)(x)=\begin{cases}
        \nabla\phi_1(x)&\quad\text{if } \phi_1(x)\leq\phi_2(x),\\
        \nabla\phi_2(x)&\quad\text{if } \phi_1(x)>\phi_2(x),     
    \end{cases}
\end{equation*}
\begin{equation*}
    \nabla(\phi_1\vee\phi_2)(x)=\begin{cases}
        \nabla\phi_2(x)&\quad\text{if } \phi_1(x)\leq\phi_2(x),\\
        \nabla\phi_1(x)&\quad\text{if } \phi_1(x)>\phi_2(x).    
    \end{cases}
\end{equation*}
The above formulas hold for a.e. $x\in U$. Therefore
\begin{align*}
    E(\phi_1 \wedge \phi_2) + E(\phi_1 \vee \phi_2) &= \int_U \abs{\nabla (\phi_1\wedge\phi_2)}^2 + \abs{\nabla (\phi_1\vee\phi_2)}^2\\
    & = \int_{\{\phi_1 \leq \phi_2\}} \abs{\nabla \phi_1}^2 + \abs{\nabla \phi_2}^2 + \int_{\{\phi_1 > \phi_2\}} \abs{\nabla \phi_2}^2 + \abs{\nabla \phi_1}^2 \\
    &= E(\phi_1) + E(\phi_2).
\end{align*}
We deduce that $E$ is submodular. In fact the submodularity inequality is here an equality, which shows both $E$ and $-E$ to be submodular.
\end{example}

The second example is the fractional $p$-Laplacian energy (see \cite{DINEZZA2012521} for more details).

\begin{example}\label{ex:fractional}
    Given $\phi \in W^{s,p}(U)$ with $0<s<1$ and $1<p<+\infty$, the fractional $p$-Laplacian energy is defined as
    \begin{equation*}
        E(\phi) = \int_U \int_U \frac{\abs{\phi(x)-\phi(y)}^p}{\abs{x-y}^{n+sp}}\,dx\,dy.
    \end{equation*}
    $E$ is submodular as a consequence of \cref{lemma:convexdiff_submodular}. Indeed, convexity of $t \mapsto \abs{t}^p$ implies the submodularity of the functions $F_{x,y} \colon W^{s,p}(U) \to \R\cup\{+\infty\}$ defined, for almost every $x,y \in U$ such that $x\neq y$, by $F_{x,y}(\phi) = \frac{\abs{\phi(x)-\phi(y)}^p}{\vert x-y \vert^{n+sp}}$. Since submodularity is preserved by summation (or integration), we deduce that $E$ is submodular.
\end{example}

\subsection{Substitutability}\label{sec:def-exchangeable}

Substitutability naturally arises as the conjugate notion to submodularity in the context of convex duality. 
Recall that the dual space $X^*$ of a Banach lattice $(X,\norm{\cdot},\leq)$ is itself a Banach lattice equipped with the dual norm and the dual order \eqref{eq:dual-order}, which we still denote by $\leq$.
The convex conjugate of a proper function $E\colon X\to\R\cup\{+\infty\}$ is the function $E^*\colon X^*\to\R\cup\{+\infty\}$ defined by $E^*(\mu)=\sup_{\phi\in X}\bracket{\phi,\mu}-E(\phi)$ \cite{EkelandTemamBook}. 
Given a Banach lattice $Y$ and $\mu,\nu\in Y$, define the \emph{order interval} by 
\begin{equation}\label{eq:def-order-interval}
    [\mu,\nu]=\{m\in Y : \mu\leq m\leq \nu\}.
\end{equation}
The definition of a substitutable function is then motivated by the following result, proved in \Cref{sec:duality}.

\begin{theorem}\label{thm:p-order-q-order-dual}
    Let $X$ be a Banach lattice and $E_1,E_2 \colon X \to \R\cup \{+\infty\}$ two proper convex and lower semicontinuous functions. Then $E_1 \leP E_2$ if and only if for every $\mu_1,\mu_2\in X^*$, and every $t_{21}\in [0,(\mu_2-\mu_1)^+]$, there exists $t_{12}\in [0,(\mu_1-\mu_2)^+]$ such that 
    \begin{equation}\label{eq:thm-p-order-q-order-dual}
        E_1^*(\mu_1+t_{21}-t_{12})+E_2^*(\mu_2-t_{21}+t_{12}) \leq E_1^*(\mu_1)+E_2^*(\mu_2).
    \end{equation}
\end{theorem}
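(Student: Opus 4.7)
My approach is to prove each implication separately, using Fenchel--Young duality together with both parts of Lemma~\ref{lemma:prop-segments-ast}.

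For the forward direction, I fix $\mu_1, \mu_2 \in X^\ast$ and $t_{21} \in [0, (\mu_2-\mu_1)^+]$ and consider $\psi\colon t_{12} \mapsto E_1^\ast(\mu_1 + t_{21} - t_{12}) + E_2^\ast(\mu_2 - t_{21} + t_{12})$ on the weak-$\ast$ compact convex segment $[0,(\mu_1-\mu_2)^+]$ provided by Lemma~\ref{lemma:prop-segments-ast}(i). Since $E_i^\ast$ is weak-$\ast$ lsc (as a pointwise supremum of weak-$\ast$ continuous affine functions), $\psi$ attains its infimum, and it suffices to show $\inf\psi \le E_1^\ast(\mu_1)+E_2^\ast(\mu_2)$. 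Writing each $E_i^\ast$ as a Fenchel dual converts the problem into an $\inf_{t_{12}} \sup_{\phi_1,\phi_2}$ of an expression that is affine in $t_{12}$ and concave in $(\phi_1,\phi_2)$; I then swap the two via Sion's minimax theorem, whose hypotheses are met because the integrand is affine (hence quasi-concave and weak-$\ast$ usc) in the compact variable $t_{12}$ and convex (hence quasi-convex and lsc) in $(\phi_1,\phi_2)$. Lemma~\ref{lemma:prop-segments-ast}(ii) then evaluates the inner $\inf_{t_{12}}$ as $-\langle(\mu_1-\mu_2)^+,(\phi_1-\phi_2)^+\rangle$. Finally, using the decomposition $\phi_1 = (\phi_1\wedge\phi_2) + (\phi_1-\phi_2)^+$ (and its counterpart for $\phi_2$), combined with the assumption $E_1 \leP E_2$ and Fenchel--Young, the outer supremum is bounded by $E_1^\ast(\mu_1)+E_2^\ast(\mu_2)$; the leftover contributions $\langle(\phi_1-\phi_2)^+, t_{21}-(\mu_2-\mu_1)^+\rangle$ and $-\langle(\phi_2-\phi_1)^+, t_{21}\rangle$ are nonpositive thanks to $t_{21}\le (\mu_2-\mu_1)^+$.

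For the backward direction, I fix $\phi_1,\phi_2\in X$ and use biconjugation to express $E_1(\phi_1\wedge\phi_2)+E_2(\phi_1\vee\phi_2)$ as a supremum over $(\mu_1,\mu_2)\in (X^\ast)^2$ of $\langle \phi_1\wedge\phi_2,\mu_1\rangle + \langle\phi_1\vee\phi_2,\mu_2\rangle - E_1^\ast(\mu_1) - E_2^\ast(\mu_2)$. The crucial construction is the following: for each pair $(\mu_1,\mu_2)$, Lemma~\ref{lemma:prop-segments-ast}(ii) applied on $[0,(\mu_2-\mu_1)^+]$ with the test element $\phi_1-\phi_2$ produces $t_{21}^\ast$ attaining the max, which forces both $\langle t_{21}^\ast,(\phi_1-\phi_2)^+\rangle = \langle(\mu_2-\mu_1)^+,(\phi_1-\phi_2)^+\rangle$ and $\langle t_{21}^\ast,(\phi_2-\phi_1)^+\rangle = 0$. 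Invoking the dual hypothesis at this extremal $t_{21}^\ast$ yields $t_{12}\in[0,(\mu_1-\mu_2)^+]$ and a shifted pair $\mu_i' := \mu_i \pm (t_{21}^\ast - t_{12})$ satisfying $E_1^\ast(\mu_1')+E_2^\ast(\mu_2')\le E_1^\ast(\mu_1)+E_2^\ast(\mu_2)$. A short computation using the two properties of $t_{21}^\ast$ together with the bound $t_{12}\le(\mu_1-\mu_2)^+$ reduces the difference $\langle\phi_1,\mu_1'\rangle+\langle\phi_2,\mu_2'\rangle - \langle\phi_1\wedge\phi_2,\mu_1\rangle - \langle\phi_1\vee\phi_2,\mu_2\rangle$ to $\langle(\phi_1-\phi_2)^+,(\mu_1-\mu_2)^+-t_{12}\rangle + \langle(\phi_2-\phi_1)^+,t_{12}\rangle \ge 0$. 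Combining this with the dual inequality and Fenchel--Young bounds each term inside the supremum by $E_1(\phi_1)+E_2(\phi_2)$, and taking the sup concludes.

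The main obstacle is the inf-sup exchange in the forward direction via Sion's minimax theorem: since the primal space $X$ is unbounded, one must arrange for the weak-$\ast$ compact segment $[0,(\mu_1-\mu_2)^+]$ to play the compact role, and carefully verify the semicontinuity hypotheses in two different topologies (weak-$\ast$ on $X^\ast$, norm on $X$). The backward direction is more constructive; its key insight is the extremal choice of $t_{21}^\ast$ via Lemma~\ref{lemma:prop-segments-ast}(ii), which tailors the dual hypothesis exactly so that the nonnegativity of the primal discrepancy comes for free from the sign constraints on $t_{12}$, $(\phi_1-\phi_2)^+$ and $(\phi_2-\phi_1)^+$.
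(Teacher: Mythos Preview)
Your proof is correct and follows essentially the same route as the paper: both directions rest on \Cref{lemma:prop-segments-ast} and, for the forward implication, on a minimax swap over the weak-$*$ compact segment $[0,(\mu_1-\mu_2)^+]$; your extremal choice of $t_{21}^\ast$ in the backward direction is exactly the $\tilde t_{12}$ the paper constructs via its \Cref{lemma:duality-bracket} (which is just the bracket identity you write out inline). One cosmetic slip: in the Sion hypotheses you write ``convex (hence quasi-convex and lsc) in $(\phi_1,\phi_2)$'', but the saddle function is concave and usc in $(\phi_1,\phi_2)$---this is what is actually needed for $\inf_K\sup_{\phi}=\sup_{\phi}\inf_K$, and it holds since $-E_i$ is concave usc.
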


We use \eqref{eq:thm-p-order-q-order-dual} to define a binary relation $\leQ$, Q-dominance. 
We define it on arbitrary Banach lattices $Y$, but when used in duality with submodularity, for some of our results and our applications to optimal transport in \Cref{sec:cp-exch} it will exclusively be used on dual Banach lattices $Y=X^*$. 

For the remainder of this section, $X$ and $Y$ denote Banach lattices.

\begin{definition}
    Consider two functions $F_1,F_2\colon Y \to \R\cup\{+\infty\}$. We say that $F_2$ is Q-dominated by $F_1$, and write $F_2\leQ F_1$, if for any $\mu_1,\mu_2 \in Y$ and every $t_{21}\in[0 ,(\mu_2-\mu_1)^+]$, there exists $t_{12}\in[0,(\mu_1-\mu_2)^+]$ such that
    \begin{equation*}
        F_1(\mu_1+t_{21}-t_{12})+F_2(\mu_2-t_{21}+t_{12}) \leq F_1(\mu_1)+ F_2(\mu_2).
    \end{equation*}    
\end{definition}

Similar to the situation between P-dominance and submodularity, a substitutable function is defined as a function Q-dominated by itself.

\begin{definition}\label{def:exchangeability}
    We say that $F\colon Y\to\R\cup\{+\infty\}$ is substitutable if $F\leQ F$.
\end{definition}

It is also useful to consider these notions for sets.

\begin{definition}\label{def:Q-order}
    Consider two sets $B_1,B_2 \subset Y$. We say that $B_2$ is Q-dominated by $B_1$, and write $B_2\leQ B_1$, if for every $\mu_1 \in B_1, \mu_2 \in B_2$ and every $t_{21}\in[0 ,(\mu_2-\mu_1)^+]$, there exists $t_{12}\in[0,(\mu_1-\mu_2)^+]$ such that
    \begin{equation*}
        \mu_1+t_{21}-t_{12} \in B_1, \quad \mu_2-t_{21}+t_{12} \in B_2.
    \end{equation*}    
    We say that $B\subset Y$ is substitutable if $B\leQ B$.
\end{definition}

Here is an economic interpretation of Q-dominance: suppose that you need to produce goods $\omega\in \Omega$ in quantity $\mu$, where $\mu$ is a positive measure over $\Omega$. Suppose moreover that you are using two factories to manufacture them. If $F_i$ denotes the manufacturing cost of factory $i$ ($i=1,2$), then dividing the production into $\mu=\mu_1+\mu_2$ and assigning factory $i$ to manufacture $\mu_i$ results in a total cost $F_1(\mu_1)+F_2(\mu_2)$. Then $F_2\leQ F_1$ says that factory $1$ is in some sense more \emph{efficient} than factory $2$: for every way that factory $2$ might shift some of its output onto factory $1$,
factory $1$ can reassign its own resources and send some  production back to factory $2$
in such a way that the total cost of producing everything goes down. In other words, no matter what “production shock” factory $2$ imposes on factory $1$, factory $1$ can always counter-reallocate so as to reduce overall costs.

As for P-dominance, the duality bracket gives an elementary example of Q-dominance. In the next lemma, the notation $\bracket{\cdot,\phi}$ stands for the evaluation function $\mu\mapsto \bracket{\mu,\phi}$. 

\begin{lemma}\label{lemma:duality-bracket-Q-dominance}
    Let $\phi_1,\phi_2 \in X$ be such that $\phi_1 \leq \phi_2$. Then $\langle \cdot, \phi_2 \rangle$ is Q-dominated by $\langle \cdot, \phi_1 \rangle$ in $X^*$.
\end{lemma}

\begin{proof}
    Let $\mu_1,\mu_2\in X^*$ and $t_{21}\in [0,(\mu_2-\mu_1)^+]$. Set $t_{12} = 0 \in [0,(\mu_1-\mu_2)^+]$. We have 
    \begin{equation*}
        \langle t_{21}-t_{12}, \phi_1-\phi_2 \rangle = \langle t_{21}, \phi_1-\phi_2\rangle \leq 0,
    \end{equation*}
    because $\phi_1\leq\phi_2$ and $t_{21}\geq 0$. Therefore, adding $\langle \mu_1, \phi_1\rangle + \langle \mu_2, \phi_2\rangle$ to both sides of the previous inequality gives
    \begin{equation*}
        \langle \mu_1+t_{21}-t_{12}, \phi_1\rangle + \langle \mu_2-t_{21}+t_{12}, \phi_2\rangle \leq \langle \mu_1, \phi_1\rangle +\langle \mu_2, \phi_2\rangle.
    \end{equation*}
\end{proof}

Our next result shows that substitutability is a stronger notion than convexity, at least on dual Banach lattices. We start with a result on order intervals.

\begin{lemma}\label{lemma:prop-segments-ast}
    Let $\mu,\nu \in X^\ast$ be such that $\mu \leq \nu$ (otherwise $[\mu,\nu] = \emptyset$). Then we have the following two properties:
    \begin{enumerate}[(i)]
        \item The subset $[\mu,\nu]$ is convex and compact for the weak-$\ast$ topology.\label{item:lemma:prop-segments-ast-i}
        \item For any $\phi \in X$, 
        \begin{equation*}
            \max_{m \in [\mu,\nu]} \langle m,\phi \rangle = \langle \nu, \phi^+\rangle - \langle \mu,\phi^-\rangle .
        \end{equation*}        \label{item:lemma:prop-segments-ast-ii}
    \end{enumerate}
\end{lemma}

In (ii) we can write ``max'' instead of ``sup'' since for the weak-$*$ topology, $m\mapsto \bracket{m,\phi}$ is continuous and $[\mu,\nu]$ is compact (by (i)). Recall also that $\phi^+=\phi\vee 0$ and $\phi^-=-(\phi\wedge 0)$.

\begin{proof}[Proof of \cref{lemma:prop-segments-ast}]
We start with the proof of \ref*{item:lemma:prop-segments-ast-i}. Convexity of $[\mu,\nu]$ follows from the compatibility of the ordering with the vector space operations. Moreover, by definition of the ordering on $X^\ast$ and its compatibility with the topology,  $[\mu,\nu]$ is closed. It is also bounded because by \cref{lemma:lattice-basic-prop}\ref*{item:lemma:lattice-basic-prop-sum-pp-np} any $m \in [\mu,\nu]$ satisfies $m^+ \leq \nu^+$ and $m^-\leq \mu^-$, and  thus $\vert m \vert \leq \nu^+ + \mu^-$. Since $X^\ast$ is a Banach lattice we get a bound on the norm, $\Vert m \Vert \leq \Vert \nu^+ + \mu^- \Vert$. Compactness in the weak-$\ast$ topology of $[\mu,\nu]$ then follows from the Banach--Alaoglu--Bourbaki theorem.

To prove \ref*{item:lemma:prop-segments-ast-ii}, take $\phi \in X$. The upper bound on the maximum comes directly from the definition of the dual order. It remains to show the existence of an element of $[\mu,\nu]$ which attains the upper bound. If $\phi \geq 0$ then $\nu$ attains the upper bound. Similarly if $\phi \leq 0$ then $\mu$ attains the upper bound. Now assume that $\phi^+,\phi^- \neq 0$, then $(\phi^+,\phi^-)$ is a linearly independent family and we can define the linear operator $m_0\colon \alpha \phi^+ + \beta \phi^- \mapsto \alpha \langle \nu, \phi^+\rangle + \beta \langle \mu,\phi^-\rangle$. It satisfies $\langle m_0,f\rangle \leq \langle \nu, f^+\rangle - \langle \mu,f^-\rangle$ for all $f \in \mathrm{Vect}(\phi^+,\phi^-)$, and note that the right-hand side defines a subadditive operator over all of $X$. Thus by the Hahn--Banach theorem there exists $m \in X^\ast$ which extends $m_0$ and satisfies $\langle m,f\rangle \leq \langle \nu, f^+\rangle - \langle \mu,f^-\rangle$. In particular $m \in [\mu,\nu]$ and it attains the upper bound by definition of $m_0$.
\end{proof}
We will also need the lower semicontinuity of the positive part for the weak-$\ast$ topology on $X^\ast$.
\begin{lemma}\label{lem:lsc-positive-part}
    Let $(\mu_n)$ be a sequence in $X^\ast$ converging weak-$\ast$ to $\mu$. Then for every $\phi \in X$ such that $\phi \geq 0$, we have
    \begin{equation*}
        \liminf_{n\to +\infty} \,\langle (\mu_n)^+,\phi\rangle \geq \langle \mu^+,\phi\rangle.
    \end{equation*}
\end{lemma}
\begin{proof}
    Given $m\in X^\ast$ and $\phi \in X$ such that $\phi \geq 0$, \cite[Chapter II \textsection 4.2,Corollary 1]{SchaeferBook1974} combined with \cite[Chapter II, Prop. 5.5]{SchaeferBook1974} ensure that $\langle m^+,\phi\rangle = \sup_{\psi \in [0,\phi]} \langle m,\psi\rangle$. Thus $m\mapsto \langle m^+,\phi\rangle$ is lower semicontinuous for the weak-$\ast$ topology as a supremum of such functions.
\end{proof}

\begin{proposition}
    Let $X$ be a Banach lattice and $F\colon X^\ast \to \mathbb{R}\cup\{+\infty\}$ be a proper, weak-$*$ lower semicontinuous function. If $F$ is substitutable then it is convex. 
\end{proposition}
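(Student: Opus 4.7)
The plan is to prove midpoint convexity of $F$ at every pair of points, then upgrade to full convexity using the weak-$*$ lower semicontinuity. The key reduction is to the case of comparable pairs, where exchangeability collapses to an especially clean statement.

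First, I would handle the comparable case: if $\mu_1\leq\mu_2$, then $(\mu_1-\mu_2)^+=0$, so the definition of $F\leQ F$ forces $t_{12}=0$ and the ``$\exists t_{12}$'' quantifier collapses to the ``$\forall t_{21}$'' statement
\[
F(\mu_1+t)+F(\mu_2-t)\leq F(\mu_1)+F(\mu_2)\qquad\text{for every }t\in[0,\mu_2-\mu_1].
\]
Choosing $t=(\mu_2-\mu_1)/2$ gives midpoint convexity at $(\mu_1,\mu_2)$; applying the same inequality on every sub-segment (whose endpoints remain comparable) and invoking weak-$*$ lower semicontinuity upgrades this to full convexity of $F$ along the entire affine segment $[\mu_1,\mu_2]$.

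For a general pair $\mu_1,\mu_2\in\dom F$ with midpoint $m=(\mu_1+\mu_2)/2$, I would introduce the set
\[
K=\{(\nu_1,\nu_2)\in[\mu_1\wedge\mu_2,\mu_1\vee\mu_2]^2 : \nu_1+\nu_2=\mu_1+\mu_2\},
\]
which is weak-$*$ compact by \Cref{lemma:prop-segments-ast}(i), and minimize the weak-$*$ lower semicontinuous functional $G(\nu_1,\nu_2)=F(\nu_1)+F(\nu_2)$ over $K$. A minimizer $(\nu_1^\star,\nu_2^\star)$ exists with $L:=G(\nu_1^\star,\nu_2^\star)\leq F(\mu_1)+F(\mu_2)$. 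If one can exhibit a \emph{comparable} minimizer $(\bar\nu_1,\bar\nu_2)\in K$ with $\bar\nu_1\leq\bar\nu_2$, then the midpoint of $[\bar\nu_1,\bar\nu_2]$ is exactly $m$ and the first step applied to this comparable pair gives $2F(m)\leq F(\bar\nu_1)+F(\bar\nu_2)=L\leq F(\mu_1)+F(\mu_2)$, which is midpoint convexity at $(\mu_1,\mu_2)$. A classical dyadic argument together with weak-$*$ lsc then converts pointwise midpoint convexity into full convexity along every segment.

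The main difficulty lies in producing the comparable minimizer. A single application of exchangeability to $(\nu_1^\star,\nu_2^\star)$, say with $t_{21}=(\nu_2^\star-\nu_1^\star)^+$, yields another element of $K$ of the form $(\nu_1^\star\vee\nu_2^\star-t_{12},\nu_1^\star\wedge\nu_2^\star+t_{12})$ with $G$-value $\leq L$, hence also a minimizer; but the $t_{12}$ whose existence is guaranteed is not under our control, so the new pair need not be comparable. The technical heart of the proof is to combine iterative applications of exchangeability with the weak-$*$ compactness of the set of minimizers and a Zorn-style extremality argument on this set (for instance, selecting a minimizer whose meet $\nu_1\wedge\nu_2$ is maximal in the lattice order) to force the existence of a comparable minimizer, with additional care needed to keep the chains of inequalities non-vacuous when $F$ takes the value $+\infty$ at the relevant lattice extrema.
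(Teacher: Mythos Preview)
Your reduction to the comparable case is clean and correct: when $\mu_1\le\mu_2$ exchangeability does collapse to the universally quantified inequality you write, and midpoint convexity along segments with comparable endpoints follows immediately. The framework of minimizing $G$ over the weak-$*$ compact set $K$ is also sound, and the overall strategy (find a comparable minimizing pair, then apply the first step) would indeed finish the proof.

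The gap is in the step you yourself flag as the ``technical heart'': you do not actually produce a comparable minimizer. The Zorn-style suggestion (pick a minimizer with lattice-maximal meet) is only a hope. To make it rigorous you would need (a) that chains in the minimizer set admit upper bounds whose meet does not decrease, which is delicate because the lattice operation $\wedge$ is not weak-$*$ continuous in general, and (b) that a maximal-meet minimizer must be comparable---and you give no mechanism for this; a single application of exchangeability to such a pair yields another minimizer, but nothing forces its meet to strictly increase. As written, the argument stops precisely where the real difficulty begins.

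The paper's proof bypasses the search for a comparable pair altogether. It builds an explicit sequence $\nu_k$ with $(\nu_k,2m-\nu_k)\in K$ and $F(\nu_k)+F(2m-\nu_k)\le F(\mu_1)+F(\mu_2)$, driving $\nu_k\to m$ directly: at each step one feeds in the \emph{full} positive (or negative) part of $\nu_k-m$ as the controlled perturbation, receives the other perturbation from exchangeability, and checks that $|\nu_{k+1}-m|$ is dominated by whichever of $(\nu_k-m)^\pm$ had the smaller norm. Weak-$*$ compactness of $[\mu_1\wedge\mu_2,\mu_1\vee\mu_2]$ and lower semicontinuity of $F$ then give a subsequential limit, and a short norm argument forces that limit to be $m$. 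This is the concrete iterative scheme you allude to but do not carry out, and it renders the comparable-case preliminary unnecessary.
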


\begin{proof}
    Since $F$ is lower semicontinuous, it is sufficient to prove that it is midpoint convex. Let $\mu_1,\mu_2\in X^\ast$ and set $m = \frac{\mu_1+\mu_2}{2}$. We introduce the level set 
    \begin{equation*}
        S = \left\{\nu \in X^\ast : \nu \in [\mu_1\wedge \mu_2,\mu_1\vee \mu_2], F(\nu) + F(2m-\nu) \leq F(\mu_1)+F(\mu_2) \right\}.
    \end{equation*}
    First, we claim that finding an element $\nu \in S$ such that $\nu \leq m$ is sufficient to prove midpoint convexity. Indeed, in that case $\nu \leq 2m-\nu$, and the substitutability property applied to $\nu$, $2m-\nu$ and $t_{21}=m-\nu$ gives necessarily $t_{12}=0$ and the inequality $F(m)+F(m)\leq F(\nu)+F(2m-\nu)$. Using that $\nu\in S$ then implies the desired midpoint inequality.

    In order to establish the existence of such a point $\nu$, first observe that $S$ is not empty since $\mu_1,\mu_2 \in S$, and that $S$ is weak-$\ast$ closed by lower semicontinuity of $F$. Moreover $S$ is compact for the weak-$\ast$ topology by \cref{lemma:lattice-basic-prop}\ref{item:lemma:lattice-basic-prop-tv-bound}. Let $\nu_0 \in S$. 
    Applying the substitutability property to $2m-\nu_0$, $\nu_0$ and $t_{21}=(\nu_0-m)^+$, we find that there exists $t_{12}\in [0,2(m-\nu_0)^+]$ such that $\nu_1\coloneqq \nu_0 - (\nu_0-m)^+ + t_{12}$ verifies $F(\nu_1)+F(2m-\nu_1)\leq F(\nu_0)+F(2m-\nu_0)$. We deduce that $\nu_1\in S$. Now, observe that
    \begin{equation*}
        \nu_1 - m = \nu_0 - m - (\nu_0-m)^+ + t_{12} \in [-(\nu_0-m)^-,(\nu_0-m)^-].
    \end{equation*}
    This implies $\abs{\nu_1-m} \leq (\nu_0-m)^-$ as shown in \cref{lemma:lattice-basic-prop}\ref{item:lemma:lattice-basic-prop-tv-bound}. 
    
    By iterating this process, we can construct a sequence $(\nu_k)_k$ of elements of $S$ satisfying $\abs{\nu_k-m} \leq (\nu_{k-1}-m)^-$. Decomposing $\abs{\nu_k-m} =(\nu_k-m)^+ + (\nu_k-m)^-$ and pairing with an arbitrary $\phi\geq 0$, we obtain a telescoping sum and the estimate
    \begin{equation*}
        \sum_{k=1}^\infty \bracket{(\nu_k-m)^+,\phi}  \leq \bracket{(\nu_0-m)^-,\phi}.
    \end{equation*}
    We deduce that $\bracket{(\nu_k-m)^+, \phi} \to 0$ as $k \to\infty$.

    Since $S$ is compact for the weak-$\ast$ topology, we may pass to a subsequence (still denoted by $(\nu_k)$) that weak-$\ast$ converges to some $\nu\in S$. 
    By \cref{lem:lsc-positive-part}, we have for any $\phi \geq 0$
    \begin{equation*}
        \langle (\nu-m)^+,\phi \rangle \leq \lim_k \langle (\nu_{k}-m)^+ , \phi\rangle = 0.
    \end{equation*}
    Since $\phi \geq 0$ is arbitrary, we have shown that $(\nu-m)^+ \leq 0$, or equivalently $\nu \leq m$.
\end{proof}

Similar to \Cref{lemma:P-order-joint-subm} stated in the world of submodularity, relations of the form $F_2\leQ F_1$ may come from joint substitutability. Recall that $X\times Y$ is equipped with the product Banach lattice structure, as in \Cref{sec:def-submodular}.

\begin{lemma}\label{lemma:Q-order-joint-exch}
    Let $X$ and $Y$ be two Banach lattices and consider a function $F\colon X\times Y\to\R\cup\{+\infty\}$ which is jointly substitutable. If $\mu_1,\mu_2\in X$ satisfy $\mu_1\leq \mu_2$ then $F(\mu_2,\cdot)\leQ F(\mu_1,\cdot)$. 
\end{lemma}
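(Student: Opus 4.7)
The plan is to mirror the proof of \Cref{lemma:P-order-joint-subm}, exploiting joint exchangeability on the product Banach lattice $X \times Y$ by choosing the $X$-component of the admissible perturbation to be zero. First I would unfold the desired conclusion $F(\mu_2,\cdot) \leQ F(\mu_1,\cdot)$: given $\nu_1,\nu_2\in Y$ and $t_{21}\in[0,(\nu_2-\nu_1)^+]$, I must exhibit $t_{12}\in[0,(\nu_1-\nu_2)^+]$ such that
\begin{equation*}
F(\mu_1,\nu_1+t_{21}-t_{12})+F(\mu_2,\nu_2-t_{21}+t_{12}) \leq F(\mu_1,\nu_1)+F(\mu_2,\nu_2).
\end{equation*}

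Next I would record the basic identity for the product Banach lattice: for $(\phi,\psi)\in X\times Y$ the positive part satisfies $(\phi,\psi)^+=(\phi^+,\psi^+)$, since $(\phi,\psi)\vee(0,0)=(\phi\vee 0,\psi\vee 0)$. Consequently,
\begin{equation*}
\bigl((\mu_2,\nu_2)-(\mu_1,\nu_1)\bigr)^+=\bigl((\mu_2-\mu_1)^+,(\nu_2-\nu_1)^+\bigr),
\end{equation*}
and analogously for the opposite difference. Using the hypothesis $\mu_1\leq\mu_2$, this reduces to $\bigl(\mu_2-\mu_1,(\nu_2-\nu_1)^+\bigr)$ and $\bigl(0,(\nu_1-\nu_2)^+\bigr)$ respectively.

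Now I would apply the joint exchangeability of $F$ at the pair $(\mu_1,\nu_1),(\mu_2,\nu_2)$ with the admissible perturbation $(s_{21},t_{21})\coloneqq (0,t_{21})$, which lies in $\bigl[0,\bigl((\mu_2,\nu_2)-(\mu_1,\nu_1)\bigr)^+\bigr]$ by the previous paragraph. Joint exchangeability produces $(s_{12},t_{12})\in\bigl[0,\bigl(0,(\nu_1-\nu_2)^+\bigr)\bigr]$ satisfying the exchangeability inequality. But $s_{12}\in[0,0]$ forces $s_{12}=0$, so the $X$-components of the shifted points collapse to $\mu_1$ and $\mu_2$, while $t_{12}\in[0,(\nu_1-\nu_2)^+]$ as required, and the exchangeability inequality reduces to exactly the one we wanted.

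There is no real obstacle here: the only point that needs care is the lattice identity for positive parts in the product Banach lattice, which follows directly from the componentwise definition of the product order and the meet/join operations recalled in \Cref{sec:def-submodular}. Everything else is an algebraic rewriting.
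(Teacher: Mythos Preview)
Your proof is correct and follows essentially the same approach as the paper: extend $t_{21}$ to the product lattice as $(0,t_{21})$, apply joint exchangeability, and use $\mu_1\leq\mu_2$ to see that the $X$-component of the resulting $\tau_{12}$ must vanish. The only cosmetic difference is that you compute the positive parts in the product lattice explicitly, whereas the paper leaves this implicit.
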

\begin{proof}
    Fix $\mu_1\leq\mu_2\in X$, $\nu_1,\nu_2\in Y$ and $t_{21}\in [0,(\nu_2-\nu_1)^+]$. Denote  $\gamma_i=(\mu_i,\nu_i)$ for $i=1,2$. We first extend $t_{21}$ from $Y$ to $X\times Y$ into $\tau_{21}=(0_{X},t_{21})$ and note that $0\leq\tau_{21}\leq (\gamma_2-\gamma_1)^+$. Since $F$ is substitutable there exists $\tau_{12}\in X\times Y$ such that 
    \begin{equation}\label{eq:proof-lemma:Q-order-joint-exch-1}
         \tau_{12}\in [0,(\gamma_1-\gamma_2)^+],
    \end{equation}
    and 
    \begin{equation}\label{eq:proof-lemma:Q-order-joint-exch-2}
        F(\gamma_1+\tau_{21}-\tau_{12})+F(\gamma_2-\tau_{21}+\tau_{12})\leq F(\gamma_1)+F(\gamma_2).
    \end{equation}

    Using that $\mu_1\leq\mu_2$, we deduce from \eqref{eq:proof-lemma:Q-order-joint-exch-1} that the first component of $\tau_{12}$ vanishes. Therefore $\tau_{12}=(0_{X}, t_{12})$ for some $0\leq t_{12}\leq (\nu_1-\nu_2)^+$. As a consequence, \eqref{eq:proof-lemma:Q-order-joint-exch-2} takes the form 
    \begin{equation}
        F(\mu_1,\nu_1+t_{21}-t_{12})+F(\mu_2,\nu_2-t_{21}+t_{12})\leq F(\mu_1,\nu_1)+F(\mu_2,\nu_2).
    \end{equation}
    This is the desired inequality.
\end{proof}

Q-dominance is also preserved by certain minimization procedures.

\begin{lemma}\label{lemma:Q-order-min}
    Let $X$ and $Y$ be two Banach lattices and consider two functions $G_1,G_2\colon X\times Y\to\R\cup\{+\infty\}$ bounded from below and jointly substitutable. Define $F_i\colon X\to\R\cup\{+\infty\}$ by $F_i(\mu)=\inf_{\nu\in Y} G_i(\mu,\nu)$ for $i=1,2$. If $G_1\leQ G_2$ then $F_1\leQ F_2$. 
\end{lemma}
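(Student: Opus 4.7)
The plan is to follow the structure of the P-dominance analog \Cref{lemma:P-order-min}: lift the perturbation $t_{21}$ from $X$ into the product lattice $X\times Y$, apply the joint Q-dominance hypothesis, and project back onto the $X$-coordinate. The asymmetry in the definition of Q-dominance---namely, that $t_{12}$ is produced as an existential object that a priori depends on the reference points $\mu_1,\mu_2$---will force an extra limiting argument that has no counterpart in the P-case.

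Fix $\mu_1,\mu_2\in X$ and $t_{21}\in [0,(\mu_2-\mu_1)^+]$. For each $\eps>0$, pick near-minimizers $\nu_i^\eps\in Y$ with $G_i(\mu_i,\nu_i^\eps)\leq F_i(\mu_i)+\eps$ and set $\gamma_i^\eps=(\mu_i,\nu_i^\eps)\in X\times Y$. Lift the perturbation to $\tau_{21}=(t_{21},0_Y)$. Thanks to the product-lattice identity $(\gamma_2^\eps-\gamma_1^\eps)^+=((\mu_2-\mu_1)^+,(\nu_2^\eps-\nu_1^\eps)^+)$, we have $\tau_{21}\in [0,(\gamma_2^\eps-\gamma_1^\eps)^+]$. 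Applying $G_1\leQ G_2$ produces $\tau_{12}^\eps=(t_{12}^\eps,s_{12}^\eps)\in [0,(\gamma_1^\eps-\gamma_2^\eps)^+]$ such that
\[
G_1(\gamma_1^\eps+\tau_{21}-\tau_{12}^\eps)+G_2(\gamma_2^\eps-\tau_{21}+\tau_{12}^\eps)\leq G_1(\gamma_1^\eps)+G_2(\gamma_2^\eps).
\]
Reading off the first coordinate yields $t_{12}^\eps\in [0,(\mu_1-\mu_2)^+]$; bounding the left-hand $G_i$'s below by the marginals $F_i$ and using near-minimality on the right gives
\[
F_1(\mu_1+t_{21}-t_{12}^\eps)+F_2(\mu_2-t_{21}+t_{12}^\eps)\leq F_1(\mu_1)+F_2(\mu_2)+2\eps.
\]

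The main obstacle is then to collapse the family $(t_{12}^\eps)_{\eps>0}$ into a single $t_{12}$ that realizes the exact Q-dominance inequality required by \Cref{def:Q-order}. I would extract a limit point $t_{12}^\ast\in [0,(\mu_1-\mu_2)^+]$ of $t_{12}^\eps$ as $\eps\to 0$ and pass to the limit. This step requires sequential compactness of the order interval $[0,(\mu_1-\mu_2)^+]$ together with lower semicontinuity of the marginals $F_i$ in the corresponding topology; in the setting where $X$ is a dual Banach lattice both ingredients are supplied by \Cref{lemma:prop-segments-ast} (weak-$\ast$ compactness of segments), combined with joint weak-$\ast$ lower semicontinuity of the $G_i$, which transfers to $F_i$ by an elementary argument. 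In the cleaner situation where the infima defining $F_i$ are attained, one may simply choose actual minimizers $\nu_i^\ast\in\argmin G_i(\mu_i,\cdot)$ in place of the $\nu_i^\eps$ and bypass the limiting step entirely, obtaining a single valid $t_{12}$ in one shot.
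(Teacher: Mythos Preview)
Your proof follows exactly the paper's route: lift $t_{21}$ to $\tau_{21}=(t_{21},0_Y)$, apply $G_1\leQ G_2$ in $X\times Y$, take the first coordinate $t_{12}$ of the resulting $\tau_{12}$, and bound the left-hand $G_i$'s below by the marginals $F_i$. You are in fact more careful than the paper on the final step. The paper arrives at the same inequality
\[
F_1(\mu_1+t_{21}-t_{12}) + F_2(\mu_2-t_{21}+t_{12}) \leq G_1(\mu_1,\nu_1)+G_2(\mu_2,\nu_2)
\]
and then concludes directly with ``since $\nu_1$ and $\nu_2$ are arbitrary'', without addressing the point you flag, namely that $t_{12}$ depends on the choice of $\nu_1,\nu_2$ through $(\gamma_1,\gamma_2)$. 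Your proposed closure---either take exact minimizers $\nu_i$ when the infima are attained, or pass to a weak-$\ast$ limit point of $t_{12}^\eps$ using compactness of $[0,(\mu_1-\mu_2)^+]$ and lower semicontinuity of the $F_i$ in the dual-lattice setting---is precisely what is needed to make that step rigorous.
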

\begin{proof}
    Let $\mu_1,\mu_2\in X$ and $t_{21}\in [0,(\mu_2-\mu_1)^+]$. Let $\nu_1,\nu_2\in Y$ and  denote  $\gamma_i=(\mu_i,\nu_i)$ for $i=1,2$. Extend $t_{21}$ from $X$ to $\tau_{21}\in X\times Y$ in such a way that $0\leq\tau_{21}\leq (\gamma_2-\gamma_1)^+$, for instance taking $\tau_{21}=(t_{21},0_{Y})$. Since $G_1\leQ G_2$ there exists $\tau_{12}\in X\times Y$ such that $0\leq \tau_{12}\leq (\gamma_1-\gamma_2)^+$
    and 
    \begin{equation}\label{eq:proof-lemma:Q-order-min-2}
        G_1(\gamma_1+\tau_{21}-\tau_{12})+G_2(\gamma_2-\tau_{21}+\tau_{12})\leq G_1(\gamma_1)+G_2(\gamma_2).
    \end{equation}

    Let $t_{12}$ denote the first component of $\tau_{12}$. Then $0\leq t_{12}\leq (\mu_1-\mu_2)^+$ and the left-hand side of \eqref{eq:proof-lemma:Q-order-min-2} can be bounded below by the respective infima of $G_1$ and $G_2$ over $Y$, resulting in 
    \begin{equation}
        F_1(\mu_1+t_{21}-t_{12}) + F_2(\mu_2-t_{21}+t_{12}) \leq G_1(\mu_1,\nu_1)+G_2(\mu_2,\nu_2).
    \end{equation}
    Since $\nu_1$ and $\nu_2$ are arbitrary we obtain $F_1(\mu_1+t_{21}-t_{12}) + F_2(\mu_2-t_{21}+t_{12}) \leq F_1(\mu_1)+F_2(\mu_2)$. 
\end{proof}

Finally, the next two results are the main ingredients to prove comparison principles using substitutability. They work the same way as \Cref{prop:topkis} and \Cref{lemma:P-singletons} in the previous section. The first one allows us to compare solutions of variational problems.

\begin{proposition}\label{prop:argmin-exch}
    Let $F_1,F_2\colon Y\to\R\cup\{+\infty\}$ be two functions on $Y$. If $F_1 \leQ F_2$ then $\argmin F_1 \leQ \argmin F_2$.
\end{proposition}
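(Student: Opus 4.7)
The plan is to mirror the structure of \Cref{prop:topkis}, with the only modification being that instead of comparing $\mu_1,\mu_2$ directly through $\wedge$ and $\vee$, we must carry along the correction term $t_{12}$ that the definition of Q-dominance guarantees. Since $F_1$ and $F_2$ are assumed to attain their infima, $\argmin F_1$ and $\argmin F_2$ are nonempty, so the statement $\argmin F_1 \leQ \argmin F_2$ is not vacuous.

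First, I would pick arbitrary $\mu_1 \in \argmin F_2$ and $\mu_2 \in \argmin F_1$ (the pairing is chosen so that $\mu_1$ sits in the slot of the definition where $F_2$ is evaluated, following the convention of \Cref{def:Q-order}), and an arbitrary $t_{21} \in [0,(\mu_2-\mu_1)^+]$. Applying the hypothesis $F_1 \leQ F_2$ to the pair $(\mu_1,\mu_2)$ and the perturbation $t_{21}$ produces some $t_{12} \in [0,(\mu_1-\mu_2)^+]$ satisfying
\begin{equation*}
F_2(\mu_1 + t_{21} - t_{12}) + F_1(\mu_2 - t_{21} + t_{12}) \leq F_2(\mu_1) + F_1(\mu_2).
\end{equation*}

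Next, I would invoke the minimality of $\mu_1$ and $\mu_2$, which gives the termwise reverse inequalities $F_2(\mu_1) \leq F_2(\mu_1+t_{21}-t_{12})$ and $F_1(\mu_2)\leq F_1(\mu_2-t_{21}+t_{12})$. Summing these and comparing with the Q-dominance inequality above forces both bounds to be equalities, so $\mu_1 + t_{21} - t_{12} \in \argmin F_2$ and $\mu_2 - t_{21} + t_{12} \in \argmin F_1$. Since the pair $(\mu_1,\mu_2)$ and the perturbation $t_{21}$ were arbitrary, this is exactly the assertion $\argmin F_1 \leQ \argmin F_2$ in the sense of \Cref{def:Q-order}.

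There is no real obstacle here: the argument is essentially the Q-dominance analogue of Topkis' theorem. The one point requiring a moment of care is keeping track of which minimizer is paired with which function in the asymmetric definition of Q-dominance; once that bookkeeping is correct, the rest is just the standard trick of using optimality to upgrade the inequality to an equality.
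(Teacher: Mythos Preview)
Your proof is correct and essentially identical to the paper's argument. The only cosmetic difference is that the paper swaps the indices $1\leftrightarrow 2$ at the outset (proving $F_2\leQ F_1\implies \argmin F_2\leQ \argmin F_1$) so that $\mu_i\in\argmin F_i$, whereas you work directly with $F_1\leQ F_2$ and carefully pair $\mu_1\in\argmin F_2$, $\mu_2\in\argmin F_1$; the substance of the argument---apply the Q-dominance inequality, then use optimality to force equality in each term---is the same.
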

    
\begin{proof}
    To be consistent with the notation introduced before, let us reverse the roles of $F_1$ and $F_2$ and show that $F_2 \leQ F_1 \implies\argmin F_2 \leQ \argmin F_1$. 
    If either $\argmin F_1$ or $\argmin F_2$ is empty the result holds vacuously. Otherwise, take $\mu_1 \in \argmin F_1$ and $\mu_2 \in \argmin F_2$. Since $F_2\leQ F_1$, for any $0 \leq t_{21} \leq (\mu_2 - \mu_1)^+$ there exists $0 \leq t_{12} \leq (\mu_1-\mu_2)^+$ such that
    \begin{equation*}
        F_1(\mu_1 +t_{21}-t_{12}) + F_2(\mu_2 - t_{21}+t_{12}) \leq F_1(\mu_1) + F_2(\mu_2).
    \end{equation*}    
    Thus $\mu_1 +t_{21}-t_{12} \in \argmin F_1$ and $\mu_2 - t_{21}+t_{12}\in \argmin F_2$, proving the desired result. 
\end{proof}

The second result connects Q-dominance for singletons and the lattice order on $Y$.

\begin{lemma}\label{lemma:Q-singletons}
    Let $\mu_1,\mu_2\in Y$. If $\{\mu_1\} \leQ \{\mu_2\}$ then $\mu_1 \leq \mu_2$.
\end{lemma}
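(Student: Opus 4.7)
The plan is to test the definition of Q-dominance at the extremal input $t_{21}=(\mu_1-\mu_2)^+$ and read off the desired inequality from the resulting constraint on $t_{12}$, using the standard vector-lattice identity $x^+\wedge x^-=0$.

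First I would unpack what $\{\mu_1\}\leQ \{\mu_2\}$ means via \Cref{def:Q-order}. Applied to the singletons $B_1=\{\mu_2\}$ and $B_2=\{\mu_1\}$, the definition says that for every $t_{21}\in[0,(\mu_1-\mu_2)^+]$ there exists $t_{12}\in[0,(\mu_2-\mu_1)^+]$ such that
\begin{equation*}
    \mu_2+t_{21}-t_{12}=\mu_2 \quad\text{and}\quad \mu_1-t_{21}+t_{12}=\mu_1,
\end{equation*}
both of which amount to the single equality $t_{12}=t_{21}$.

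Next I would make the extremal choice $t_{21}=(\mu_1-\mu_2)^+$. The hypothesis then provides $t_{12}=t_{21}=(\mu_1-\mu_2)^+$ satisfying $t_{12}\leq (\mu_2-\mu_1)^+$, which directly yields
\begin{equation*}
    (\mu_1-\mu_2)^+\leq (\mu_2-\mu_1)^+.
\end{equation*}
Writing $x=\mu_1-\mu_2$, this becomes $x^+\leq x^-$. Combined with the standard vector-lattice identity $x^+\wedge x^-=0$, we get $x^+=x^+\wedge x^-=0$, hence $\mu_1-\mu_2\leq 0$, which is the desired conclusion $\mu_1\leq\mu_2$.

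I do not foresee any real obstacle: the proof is essentially a one-line test of the definition at the maximal perturbation $t_{21}=(\mu_1-\mu_2)^+$, and the only structural fact used beyond that is the elementary disjointness $x^+\wedge x^-=0$, which holds in any vector lattice and in particular in the Banach lattice $Y$.
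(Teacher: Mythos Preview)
Your proof is correct and essentially identical to the paper's: both test the definition at the extremal choice $t_{21}=(\mu_1-\mu_2)^+$ (after a harmless relabeling to match the definition's conventions), obtain $t_{12}=t_{21}$ from the singleton constraint, and conclude $x^+\leq x^-$ forces $x^+=0$. The only cosmetic difference is that the paper swaps the names of $\mu_1,\mu_2$ up front to avoid the $B_1/B_2$ bookkeeping, while you handle it by setting $B_1=\{\mu_2\}$, $B_2=\{\mu_1\}$; you are also a bit more explicit in invoking $x^+\wedge x^-=0$ where the paper leaves that step implicit.
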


\begin{proof}
    Again to be consistent with the notation introduced above, we exchange the roles of $\mu_1$ and $\mu_2$ in this proof. If $\{\mu_2\} \leQ \{\mu_1\}$ then choosing $t_{21}\coloneqq(\mu_2-\mu_1)^+$ there exists $0 \leq t_{12} \leq (\mu_1-\mu_2)^+$ such that $\mu_1+t_{21}-t_{12} \in \{\mu_1\}$. In other words, $t_{21}=t_{12}$ which means that $(\mu_2-\mu_1)^+\leq (\mu_1-\mu_2)^+=(\mu_2-\mu_1)^-$. This implies $\mu_2-\mu_1\leq 0$. 
\end{proof}
Unlike P-dominance, Q-dominance is not compatible with addition in general. 
This incompatibility
motivates us to introduce a strengthening of substitutability, \emph{total substitutability}. 

\begin{definition}
    A function $H\colon Y\to\R\cup\{+\infty\}$ is \emph{totally substitutable} if for every $\mu_1,\mu_2\in Y$, and any $\mu_1',\mu_2' \in [\mu_1 \wedge \mu_2,\mu_1 \vee \mu_2]$ such that $\mu_1'+\mu_2'=\mu_1+\mu_2$, we have
    \begin{equation*}
        H(\mu_1')+H(\mu_2') \leq H(\mu_1)+H(\mu_2).
    \end{equation*}
    We also define a set $B\subset Y$ to be totally substitutable when $\iota_B$ is totally substitutable. 
\end{definition}

Simple examples of substitutable functions will be given in \Cref{sec:ex-exchangeable}. The main appeal of total substitutability lies the following result.

\begin{proposition}\label{prop:sum-total-exch}
    Suppose that $F_1,F_2\colon Y\to\R\cup\{+\infty\}$ satisfy $F_1\leQ F_2$ and that $H\colon Y\to\R\cup\{+\infty\}$ is totally substitutable. Then $F_1+H\leQ F_2+H$.
\end{proposition}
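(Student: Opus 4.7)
The plan is to unpack the definition of Q-dominance for $F_1+H$ and $F_2+H$, use Q-dominance of $F_1,F_2$ to produce a witness $t_{12}$, and then show that the same witness works for $H+H$ thanks to total exchangeability.

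Concretely, fix $\mu_1,\mu_2\in Y$ and $t_{21}\in[0,(\mu_2-\mu_1)^+]$. Since $F_1\leQ F_2$, there exists $t_{12}\in[0,(\mu_1-\mu_2)^+]$ such that
\begin{equation*}
F_1(\mu_1+t_{21}-t_{12})+F_2(\mu_2-t_{21}+t_{12})\leq F_1(\mu_1)+F_2(\mu_2).
\end{equation*}
Set $\mu_1'=\mu_1+t_{21}-t_{12}$ and $\mu_2'=\mu_2-t_{21}+t_{12}$. The claim reduces to showing $H(\mu_1')+H(\mu_2')\leq H(\mu_1)+H(\mu_2)$, which we will obtain from total exchangeability of $H$ applied to the pair $(\mu_1',\mu_2')$.

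To invoke total exchangeability we need to verify the two conditions: first, the obvious identity $\mu_1'+\mu_2'=\mu_1+\mu_2$; second, that $\mu_1',\mu_2'\in[\mu_1\wedge\mu_2,\mu_1\vee\mu_2]$. The second check is the only substantive step and uses the standard Banach lattice identities $(\mu_2-\mu_1)^+=\mu_1\vee\mu_2-\mu_1$, $(\mu_1-\mu_2)^+=\mu_1\vee\mu_2-\mu_2$, and $\mu_1+\mu_2=\mu_1\wedge\mu_2+\mu_1\vee\mu_2$. From $t_{21}\leq\mu_1\vee\mu_2-\mu_1$ we get $\mu_1+t_{21}\leq\mu_1\vee\mu_2$, hence $\mu_1'\leq\mu_1\vee\mu_2$. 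From $t_{12}\leq\mu_1\vee\mu_2-\mu_2$ we get $-t_{12}\geq\mu_2-\mu_1\vee\mu_2$, so $\mu_1'\geq\mu_1-(\mu_1\vee\mu_2-\mu_2)=\mu_1\wedge\mu_2$. The argument for $\mu_2'$ is symmetric.

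Once $\mu_1',\mu_2'\in[\mu_1\wedge\mu_2,\mu_1\vee\mu_2]$ and $\mu_1'+\mu_2'=\mu_1+\mu_2$ are in hand, total exchangeability of $H$ yields $H(\mu_1')+H(\mu_2')\leq H(\mu_1)+H(\mu_2)$. Adding this to the inequality for $F_1,F_2$ produces
\begin{equation*}
(F_1+H)(\mu_1+t_{21}-t_{12})+(F_2+H)(\mu_2-t_{21}+t_{12})\leq (F_1+H)(\mu_1)+(F_2+H)(\mu_2),
\end{equation*}
which is exactly $F_1+H\leQ F_2+H$. No step is really hard here; the only mild subtlety is keeping track of the lattice identities that locate $\mu_1',\mu_2'$ inside the segment $[\mu_1\wedge\mu_2,\mu_1\vee\mu_2]$, which is precisely the geometric content that total exchangeability (as opposed to mere exchangeability, which would only give a single admissible perturbation) is designed to exploit.
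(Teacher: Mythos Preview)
Your argument is essentially the paper's own proof: pick the witness $t_{12}$ from Q-dominance, set $\mu_1',\mu_2'$, check they lie in $[\mu_1\wedge\mu_2,\mu_1\vee\mu_2]$ with $\mu_1'+\mu_2'=\mu_1+\mu_2$, invoke total exchangeability of $H$, and add. The paper states the segment membership ``by construction'' while you spell out the lattice identities; otherwise the proofs coincide.

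One notational caveat: with the paper's convention, $F_1\leQ F_2$ means that for each $t_{21}$ there is a $t_{12}$ with
\[
F_2(\mu_1+t_{21}-t_{12})+F_1(\mu_2-t_{21}+t_{12})\leq F_2(\mu_1)+F_1(\mu_2),
\]
i.e.\ the function \emph{after} the symbol $\leQ$ sits at the $\mu_1$-slot. You have written it with the indices swapped, both when invoking the hypothesis and when identifying the conclusion. Since you swap consistently, the argument still proves the desired preservation of Q-dominance (indeed the paper itself reverses the roles of $F_1,F_2$ to avoid this awkwardness), but as written the two displayed lines literally correspond to $F_2\leQ F_1$ and $F_2+H\leQ F_1+H$ under the paper's definition. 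Adjusting the indices (or stating up front that you are relabeling, as the paper does) would make the proof match the statement verbatim.
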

\begin{proof}
    To be consistent with the notation introduced before, we reverse once more the roles of $F_1$ and $F_2$ and show that $F_2 \leQ F_1 \implies F_2+H \leQ F_1+H$. Fix $\mu_1,\mu_2\in Y$ and $t_{21}\in [0,(\mu_2-\mu_1)^+]$. Since $F_2\leQ F_1$ there exists $t_{12}\in [0,(\mu_1-\mu_2)^+]$ such that $\mu_1'\coloneqq \mu_1+t_{21}-t_{12}$ and $\mu_2'\coloneqq \mu_2+t_{12}-t_{21}$ verify
    \begin{equation}\label{eq:proof-prop-sum-total-exch-1}
        F_1(\mu_1')+F_2(\mu_2')\leq F_1(\mu_1)+F_2(\mu_2).
    \end{equation}
    By construction of $t_{12}$ and $t_{21}$ we have that $\mu_1',\mu_2'\in[\mu_1\wedge\mu_2,\mu_1\vee\mu_2]$. Total substitutability of $H$ then implies 
    \begin{equation}\label{eq:proof-prop-sum-total-exch-2}
        H(\mu_1')+H(\mu_2')\leq H(\mu_1)+H(\mu_2).
    \end{equation}
    Combining \eqref{eq:proof-prop-sum-total-exch-1} and \eqref{eq:proof-prop-sum-total-exch-2} gives the desired result.
\end{proof}

As a direct consequence of \Cref{prop:sum-total-exch}, we find 
\begin{corollary}
    Let $F$ and $H\colon Y\to\R\cup\{+\infty\}$ respectively denote a substitutable and totally substitutable function. Then $F+H$ is substitutable.
\end{corollary}

\begin{remark}[On domains of definition]\label{rem:domain-def}
    A simple but practical use of totally substitutable sets concerns domains of definition. Let $D$ be a totally substitutable subset of a Banach lattice $Y$. Suppose that $F\colon D\to\R\cup\{+\infty\}$ satisfies the definition of a substitutable function when taking $\mu_1, \mu_2\in D$. Since $D$ is totally substitutable, $\mu_1'\coloneqq\mu_1+t_{21}-t_{12}$ and $\mu_2'\coloneqq\mu_2-t_{21}+t_{12}$ belong to $D$ for any $t_{21}\in[0,(\mu_2-\mu_1)^+]$ and $t_{12}\in[0,(\mu_2-\mu_1)^-]$. Therefore $F$ can readily be extended to a substitutable function $\tilde F$ defined on all of $Y$ by setting $\tilde F(\mu)=+\infty$ whenever $\mu\notin D$. 
\end{remark}

A concrete application of the preceding remark is for substitutable functions defined over spaces of measures which are naturally only defined over positive measures, see \cref{ex:proba-exchangeable}.

When we consider totally substitutable functions over Banach lattices of measures, it will be useful to reformulate the order interval condition in terms of a Radon--Nikodym density.

\begin{lemma}\label{lemma:total-exch-measures-density}
    Let $(S,\Sigma)$ be a measurable space, let $\mu_1,\mu_2$ be two finite signed measures on $(S,\Sigma)$ and take $\mu_1'\in [\mu_1\wedge\mu_2,\mu_1\vee\mu_2]$. Then there exists a function $h\in L^1(\abs{\mu_1-\mu_2})$, valued in $[0,1]$, such that $d\mu_1'=(1-h)\,d\mu_1+h \,d\mu_2$.
\end{lemma}
\begin{proof}
    Set $\sigma=\mu_1'-\mu_1$ and $\nu=\mu_2-\mu_1$. By \cref{lemma:lattice-basic-prop}\ref{item:lemma:lattice-basic-prop-sum} we have 
    $\mu_1\vee\mu_2=\mu_1+(\mu_2-\mu_1)^+$ and $\mu_1\wedge\mu_2=\mu_1-(\mu_2-\mu_1)^-$
    . 
    The inequality $\mu_1'\leq\mu_1\vee\mu_2$ can therefore be written as $\sigma\leq\nu^+$, which by \cref{lemma:lattice-basic-prop}\ref{item:lemma:lattice-basic-prop-sum-pp-np} implies 
    \begin{equation}\label{eq:proof-lemma:total-exch-measures-density-1}
        \sigma^+\leq\nu^+.
    \end{equation}
    Similarly, $\mu_1'\geq \mu_1\wedge\mu_2$ can be written as $\sigma\geq -\nu^-$, which in turn implies 
    \begin{equation}\label{eq:proof-lemma:total-exch-measures-density-2}
        \sigma^-\leq\nu^-.
    \end{equation}
    Let $S=S_+\cup S_-$ be a Hahn decomposition for $\nu$ (see \cref{ex:signed-measures}). By \eqref{eq:proof-lemma:total-exch-measures-density-1}--\eqref{eq:proof-lemma:total-exch-measures-density-2} we see that $\sigma^+(S_-)=0$ and $\sigma^-(S_+)=0$; therefore $S_+\cup S_-$ is also a Hahn decomposition for $\sigma$. Using the Radon--Nikodym theorem, define 
    \[
    h_1=\frac{d\sigma^+}{d\nu^+}\,\,\text{on $S_+$ \quad and\quad }h_2=\frac{d\sigma^-}{d\nu^-}\,\,\text{on $S_-$}.
    \]
    By \eqref{eq:proof-lemma:total-exch-measures-density-1}--\eqref{eq:proof-lemma:total-exch-measures-density-2}, $h_1$ and $h_2$ can be chosen valued in $[0,1]$. Moreover since $S_+$ and $S_-$ are disjoint, $h_1$ and $h_2$ can be combined into a function $h$ defined over $S$, which satisfies
    \[
    h = \frac{d\sigma}{d\nu}\text{ on }S.
    \]
    Note that $h$ is valued in $[0,1]$, and that $h\in L^1(\abs\nu)$ since $h_1\in L^1(\nu^+)$ and $h_2\in L^1(\nu^-)$. This concludes the proof.
\end{proof}

\subsubsection{Examples of substitutable sets and functions} \label{sec:ex-exchangeable}

We start with some simple considerations. We recall that given a Polish space $\Omega$, $\Mc(\Omega)$, $\Mp(\Omega)$ and $\Pc(\Omega)$ denote respectively the sets of finite signed measures, finite positive measures and probability measures on $\Omega$. In the following examples, we equip $\Mc(\Omega)$ with its natural Banach lattice structure, see \cref{ex:signed-measures}.

\begin{example}\label{ex:proba-exchangeable}
    Let $\Omega$ be a Polish space. Then (i) $\Mp(\Omega)$ is a totally substitutable subset of $\Mc(\Omega)$, and (ii) $\Pc(\Omega)$ is a substitutable subset of $\Mc(\Omega)$. 
    
    For (i), observe that if $\mu_1,\mu_2$ are two positive measures, then $\mu_1\wedge\mu_2$ is positive too. Therefore any $\mu_1',\mu_2'\geq \mu_1\wedge\mu_2$ are positive. This shows $\Mp(\Omega)$ to be totally substitutable. 
    
    For (ii), fix $\mu_1,\mu_2 \in \mathcal{P}(\Omega)$ and $t_{21} \in [0,(\mu_2-\mu_1)^+]$. Here we may as well suppose that $\mu_1\neq\mu_2$, which implies $(\mu_2-\mu_1)^+ \neq 0$. Let $t_{12} = \frac{t_{21}(\Omega)}{(\mu_2-\mu_1)^+(\Omega)}(\mu_1-\mu_2)^+$. We have $t_{12}\in [0,(\mu_1-\mu_2)^+]$ because $0 \leq t_{21}(\Omega) \leq (\mu_2-\mu_1)^+(\Omega) $. Let $\mu_1'=\mu_1 +t_{21}- t_{12}$, a positive measure by (i). Note that $(\mu_1-\mu_2)^+(\Omega)=(\mu_1-\mu_2)(\Omega)+(\mu_2-\mu_1)^+(\Omega)=(\mu_2-\mu_1)^+(\Omega)$ since $\mu_1$ and $\mu_2$ have mass $1$. Therefore $t_{12}(\Omega)=t_{21}(\Omega)$, which shows that  $\mu_1'\in\Pc(\Omega)$. By a similar argument, $\mu_2'\coloneqq \mu_2 - t_{21} + t_{12}\in\Pc(\Omega)$.
\end{example}

In particular, when a functional $F$ defined over the positive measures satisfies the requirements for substitutability when restricted to positive measures, it can be simply extended to a substitutable function over all signed measures by assigning the value $+\infty$ to measures which are not positive. See \cref{rem:domain-def}.

Let us now turn our attention to an important class of totally substitutable functions, the convex internal energies, also called \emph{Csiszár divergences} or \emph{relative entropy functionals}. We follow the presentation of \cite{AGS2008,LieroMielkeSavare2018}.

\begin{definition}\label{def:entropy-function}
    A function $f\colon [0,+\infty)\to [0,+\infty]$ is called an \emph{entropy function} if it is proper, convex, lower semicontinuous, and has superlinear growth, in the sense that $f(s)/s\to +\infty$ as $s\to+\infty$.
\end{definition}

\begin{definition}[Internal energy]\label{def:internal-energy}
    Let $\Omega$ be a Polish space. Fix $m\in\Mp(\Omega)$. Define the \emph{internal energy} $H_{f,m}\colon \Mc(\Omega)\to \R\cup\{+\infty\}$ associated with $f$ and $m$ by
    \begin{equation*}
        H_{f,m}(\mu)=\begin{cases}
            \displaystyle\int_\Omega f\Big(\frac{d\mu}{dm}\Big)\,dm\quad&\text{if $\mu$ is positive and absolutely continuous w.r.t. $m$,}\\
            \quad+\infty & \text{otherwise.}
        \end{cases}
    \end{equation*}
\end{definition}

When $m$ is the null measure, we have $H_{f,0}(\mu)=0$ if $\mu$ is the null measure and $+\infty$ otherwise, see \cite[Sect. 2.4]{LieroMielkeSavare2018}.
While $H_{f,m}$ is only finite over positive measures, we define it here over all signed measures to better integrate it in our framework. Note that doing this does not interfere with substitutability properties, see \cref{rem:domain-def}.

\begin{lemma}\label{lemma:internal_energy}
    Given a Polish space $\Omega$, let $H_{f,m}$ be the internal energy functional associated with a reference measure $m\in\Mp(\Omega)$ and an entropy function $f$. Then $H_{f,m}$ is totally substitutable.
    \end{lemma}
\begin{proof}
    Let $\mu_1,\mu_2\in\Mc(\Omega)$, and take $\mu_1',\mu_2'\in[\mu_1\wedge\mu_2,\mu_1\vee\mu_2]$ such that $\mu_1'+\mu_2'=\mu_1+\mu_2$. Let us show that 
    \begin{equation}\label{eq:proof-lemma:internal_energy}
        H_{f,m}(\mu'_1)+H_{f,m}(\mu'_2) \leq H_{f,m}(\mu_1)+H_{f,m}(\mu_2).
    \end{equation}    
    If $m$ is the null measure then the right-hand side is finite only when $\mu_1=\mu_2=0$ in which case $\mu_1'=\mu_2'=0$ and \eqref{eq:proof-lemma:internal_energy} trivially holds.

    Let us now suppose that $m$ is non-null. In view of the definition of $H_{f,m}$, we may as well take $\mu_1$ and $\mu_2$ positive and absolutely continuous with respect to $m$. By \cref{lemma:total-exch-measures-density} there exists a function $h\in L^1(\abs{\mu_1-\mu_2})$ valued in $[0,1]$ such that $d\mu_1'=(1-h)d\mu_1+h\,d\mu_2$. Then $d\mu_2'=h\,d\mu_1+(1-h)d\mu_2$. Clearly, $\mu_1'$ and $\mu_2'$ are positive and absolutely continuous with respect to $m$. By convexity of $f$,
    \begin{align*}
    H_{f,m}(\mu'_1)&=\int_\Omega f\left((1-h(x))\frac{d\mu_1}{dm}(x) + h(x)\frac{d\mu_2}{dm}(x)\right)dm(x) \\
    &\leq \int_\Omega \Big((1-h(x))f\left(\frac{d\mu_1}{dm}(x)\right) + h(x) f\left(\frac{d\mu_2}{dm}(x)\right)\Big)\,dm(x) .
    \end{align*}
    This expression may contain infinities, in which case the convention $0\cdot(+\infty)=0$ should be adopted. Using a similar argument for $\mu'_2$, we find that 
    \[
    H_{f,m}(\mu'_2)\leq \int_\Omega \Big(h(x)\,f\left(\frac{d\mu_1}{dm}(x)\right) + (1-h(x)) \,f\left(\frac{d\mu_2}{dm}(x)\right)\Big)\,dm(x).
    \]
    Summing both resulting inequalities we obtain \eqref{eq:proof-lemma:internal_energy}.
    
\end{proof}

\subsection{Duality between submodularity and substitutability}\label{sec:duality}

The two main results in this section prove that submodularity and substitutability are conjugate properties for convex duality. We fix a Banach lattice $X$ and consider duality in $X\times X^*$: the convex conjugate of a proper function $E\colon X\to\R\cup\{+\infty\}$ is the function $E^*\colon X^*\to\R\cup\{+\infty\}$ defined by $E^*(\mu)=\sup_{\phi\in X}\bracket{\phi,\mu}-E(\phi)$, while the convex conjugate of a proper function $F\colon X^*\to\R\cup\{+\infty\}$ is the function $F^*\colon X\to\R\cup\{+\infty\}$ defined by $F^*(\phi)=\sup_{\mu\in X^*}\bracket{\phi,\mu}-F(\mu)$ \cite[Chapter I]{EkelandTemamBook}.

\begin{theorem}\label{thm:duality-submodular-exch}
    Let $E_1,E_2\colon X\to\R\cup\{+\infty\}$ be two proper convex functions such that $E_1\leP E_2$. Then $E_2^*\leQ E_1^*$. In particular, if $E\colon X\to\R\cup\{+\infty\}$ is a submodular proper convex function then $E^*$ is substitutable.
\end{theorem}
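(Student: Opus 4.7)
The second assertion follows from the first by taking $E_1=E_2=E$, so I focus on proving $E_1\leP E_2\Rightarrow E_2^*\leQ E_1^*$. Fix $\mu_1,\mu_2\in X^*$ and $t_{21}\in[0,(\mu_2-\mu_1)^+]$. The plan is to take $t_{12}$ to be a minimizer over the weak-$*$ compact convex segment $K\coloneqq[0,(\mu_1-\mu_2)^+]$ (compactness from \Cref{lemma:prop-segments-ast}(i)) of the convex, weak-$*$ lower semicontinuous function
\[
g(t)\coloneqq E_1^*(\mu_1+t_{21}-t)+E_2^*(\mu_2-t_{21}+t).
\]
The existence of such a minimizer is immediate, so the theorem reduces to showing the key inequality $\min_{K}g\leq E_1^*(\mu_1)+E_2^*(\mu_2)$.

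The main tool is a minimax argument. Expanding the conjugates by their defining suprema,
\[
g(t)=\sup_{\phi_1,\phi_2\in X}\bigl[\langle\phi_1,\mu_1+t_{21}-t\rangle+\langle\phi_2,\mu_2-t_{21}+t\rangle-E_1(\phi_1)-E_2(\phi_2)\bigr].
\]
The integrand is affine in $t\in K$ (hence convex and weak-$*$ continuous) and concave upper semicontinuous in $(\phi_1,\phi_2)\in X^2$ (working with l.s.c.\ convex representatives of the $E_i$, which leaves $E_i^*$ unchanged). Since $K$ is weak-$*$ compact, Sion's minimax theorem allows the exchange $\min_t\sup_\phi=\sup_\phi\min_t$. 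The inner minimum is a linear problem over $K$ solved by \Cref{lemma:prop-segments-ast}(ii), giving
\[
\min_{t\in K}\langle\phi_2-\phi_1,t\rangle=-\langle(\mu_1-\mu_2)^+,(\phi_1-\phi_2)^+\rangle.
\]
The task thus becomes: for every $(\phi_1,\phi_2)\in X^2$,
\[
\langle\phi_1,\mu_1+t_{21}\rangle+\langle\phi_2,\mu_2-t_{21}\rangle-\langle(\mu_1-\mu_2)^+,(\phi_1-\phi_2)^+\rangle-E_1(\phi_1)-E_2(\phi_2)\leq E_1^*(\mu_1)+E_2^*(\mu_2).
\]

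The key lattice identity, derived from $\phi_1=\phi_1\wedge\phi_2+(\phi_1-\phi_2)^+$ and $\phi_2=\phi_1\vee\phi_2-(\phi_1-\phi_2)^+$, is
\[
\langle\phi_1,\mu_1\rangle+\langle\phi_2,\mu_2\rangle=\langle\phi_1\wedge\phi_2,\mu_1\rangle+\langle\phi_1\vee\phi_2,\mu_2\rangle+\langle(\phi_1-\phi_2)^+,\mu_1-\mu_2\rangle.
\]
Combining it with the elementary estimate $\langle\phi_1-\phi_2,t_{21}\rangle\leq\langle(\phi_1-\phi_2)^+,(\mu_2-\mu_1)^+\rangle$ (which holds since $t_{21}\leq(\mu_2-\mu_1)^+$ and $(\phi_1-\phi_2)^-\geq 0$), the two contributions involving $(\phi_1-\phi_2)^+$ cancel exactly, and the entire linear contribution collapses to $\langle\phi_1\wedge\phi_2,\mu_1\rangle+\langle\phi_1\vee\phi_2,\mu_2\rangle$. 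Applying the P-dominance inequality $E_1(\phi_1)+E_2(\phi_2)\geq E_1(\phi_1\wedge\phi_2)+E_2(\phi_1\vee\phi_2)$ and then the definition of the conjugates delivers the bound.

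I expect the main obstacle to be a rigorous invocation of Sion's minimax theorem in this non-reflexive, infinite-dimensional setting, since the $\phi$-side is not compact. The weak-$*$ compactness of the segment $K$ supplied by \Cref{lemma:prop-segments-ast} is precisely what makes this work, and it also clarifies why Q-dominance (rather than the naive P-dominance on $X^*$) is the correct dual notion to submodularity: only the milder Q-type perturbations survive when one computes a sup through the min-max.
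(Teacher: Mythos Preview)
Your proof is correct and follows essentially the same route as the paper's: both arguments reduce the claim to a saddle-point problem over the weak-$*$ compact segment $K=[0,(\mu_1-\mu_2)^+]$, invoke a minimax theorem (the paper cites Simons, you cite Sion) using the compactness of $K$ from \Cref{lemma:prop-segments-ast}, and then close with the P-dominance inequality and the definition of the conjugates. The only organizational difference is that the paper isolates your ``key lattice identity'' together with the explicit evaluation of $\min_{t\in K}\langle\phi_2-\phi_1,t\rangle$ into the preparatory \Cref{lemma:duality-bracket}, whereas you derive the same bound inline via \Cref{lemma:prop-segments-ast}(ii); the underlying computation and the cancellation of the $(\phi_1-\phi_2)^+$ terms are identical.
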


\begin{theorem}\label{thm:duality-exch-submodular}
    Let $F_1,F_2\colon X^\ast\to\R\cup\{+\infty\}$ be two proper functions such that $F_2\leQ F_1$. Then $(F_1)^* \leP (F_2)^*$. In particular if $F\colon X^*\to\R\cup\{+\infty\}$ is a proper substitutable function, then $F^*\colon X\to\R\cup\{+\infty\}$ is submodular.  
\end{theorem}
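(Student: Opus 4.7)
The plan is to prove the first statement directly; the ``in particular'' then follows by taking $F_1 = F_2 = F$ (so that $F \leQ F$ yields $F^* \leP F^*$, i.e.\ $F^*$ is submodular). I would not require $F_1, F_2$ to be convex or lower semicontinuous, so this is the ``easy'' dual of \Cref{thm:p-order-q-order-dual} in the sense that conjugation already forces the needed regularity on the right-hand side.

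Fix $\phi_1, \phi_2 \in X$. Writing $F_i^*$ as a supremum, the inequality
\begin{equation*}
F_1^*(\phi_1 \wedge \phi_2) + F_2^*(\phi_1 \vee \phi_2) \leq F_1^*(\phi_1) + F_2^*(\phi_2)
\end{equation*}
reduces to showing that for every $\mu_1, \mu_2 \in X^*$,
\begin{equation*}
\langle \phi_1 \wedge \phi_2, \mu_1 \rangle + \langle \phi_1 \vee \phi_2, \mu_2 \rangle - F_1(\mu_1) - F_2(\mu_2) \leq F_1^*(\phi_1) + F_2^*(\phi_2),
\end{equation*}
which in turn follows by exhibiting $\mu_1', \mu_2' \in X^*$ such that both $F_1(\mu_1') + F_2(\mu_2') \leq F_1(\mu_1) + F_2(\mu_2)$ and $\langle \phi_1, \mu_1' \rangle + \langle \phi_2, \mu_2' \rangle \geq \langle \phi_1 \wedge \phi_2, \mu_1 \rangle + \langle \phi_1 \vee \phi_2, \mu_2 \rangle$.

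To produce this rearrangement, I would feed $\mu_1, \mu_2$ into the Q-dominance hypothesis and pick $t_{21}$ cleverly. Setting $\psi = \phi_1 - \phi_2$ and using $\phi_1 \wedge \phi_2 = \phi_1 - \psi^+$, $\phi_1 \vee \phi_2 = \phi_2 + \psi^+$, the target inner-product inequality is equivalent to $\langle \psi, t_{21} - t_{12} \rangle \geq \langle \psi^+, \mu_2 - \mu_1 \rangle$. By \Cref{lemma:prop-segments-ast}(ii) applied with $0 \leq (\mu_2-\mu_1)^+$, the maximum $\max_{t_{21} \in [0,(\mu_2-\mu_1)^+]} \langle t_{21}, \psi \rangle$ is attained and equals $\langle (\mu_2-\mu_1)^+, \psi^+ \rangle$; choose $t_{21}^\ast$ attaining it. Then Q-dominance $F_2 \leQ F_1$ supplies some $t_{12} \in [0,(\mu_1-\mu_2)^+]$ with $F_1(\mu_1') + F_2(\mu_2') \leq F_1(\mu_1) + F_2(\mu_2)$, where $\mu_1' = \mu_1 + t_{21}^\ast - t_{12}$ and $\mu_2' = \mu_2 - t_{21}^\ast + t_{12}$.

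The crux of the argument, and what I expect to be the one subtle step, is that we pick $t_{21}^\ast$ but only existentially receive $t_{12}$ from the hypothesis. The fix is that \emph{any} $t_{12} \in [0,(\mu_1-\mu_2)^+]$ automatically satisfies $\langle t_{12}, \psi \rangle = \langle t_{12}, \psi^+ \rangle - \langle t_{12}, \psi^- \rangle \leq \langle (\mu_1-\mu_2)^+, \psi^+ \rangle$. Combined with $\langle t_{21}^\ast, \psi \rangle = \langle (\mu_2-\mu_1)^+, \psi^+ \rangle$, this gives $\langle \psi, t_{21}^\ast - t_{12} \rangle \geq \langle \psi^+, (\mu_2-\mu_1)^+ - (\mu_1-\mu_2)^+ \rangle = \langle \psi^+, \mu_2 - \mu_1 \rangle$, which is exactly what we need. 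Assembling the two inequalities yields
\begin{equation*}
\langle \phi_1 \wedge \phi_2, \mu_1 \rangle + \langle \phi_1 \vee \phi_2, \mu_2 \rangle - F_1(\mu_1) - F_2(\mu_2) \leq \langle \phi_1, \mu_1' \rangle + \langle \phi_2, \mu_2' \rangle - F_1(\mu_1') - F_2(\mu_2') \leq F_1^*(\phi_1) + F_2^*(\phi_2),
\end{equation*}
and taking the supremum over $\mu_1, \mu_2$ concludes the proof.
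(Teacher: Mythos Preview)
Your proof is correct and follows essentially the same approach as the paper. The paper packages the key bracket inequality into \Cref{lemma:duality-bracket} (applied with $\tilde\mu_i=-\mu_i$ to reverse the inequality), whereas you inline that argument directly by choosing $t_{21}^\ast$ via \Cref{lemma:prop-segments-ast}(ii) and then observing that any admissible $t_{12}$ satisfies the required bound; the substantive content and structure are the same.
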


As a direct consequence of the preceding two theorems we obtain:

\begin{corollary}\label{cor:submodular-exchangeable}
    Let $E\colon X\to\R\cup\{+\infty\}$ be a proper lower semicontinuous convex function. Then $E^*$ is substitutable if and only if $E$ is submodular.
\end{corollary}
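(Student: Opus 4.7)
The plan is to obtain \Cref{cor:submodular-exchangeable} as an immediate consequence of \Cref{thm:duality-submodular-exch} and \Cref{thm:duality-exch-submodular}, bridging the two directions by the Fenchel--Moreau biconjugation theorem. Since $E$ is proper, convex, and lower semicontinuous on the Banach lattice $X$, the biconjugate $E^{**}\colon X\to\R\cup\{+\infty\}$ coincides with $E$; this is the only non-formal input needed.

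For the forward implication, assuming $E$ is submodular, I would specialize \Cref{thm:duality-submodular-exch} to $E_1=E_2=E$: the assumption $E\leP E$ is exactly submodularity, and the theorem yields $E^*\leQ E^*$, which is exactly exchangeability of $E^*$.

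For the reverse implication, assume that $E^*$ is exchangeable. Apply \Cref{thm:duality-exch-submodular} with $F=E^*$ (which is proper because $E$ is proper, convex and lsc, guaranteeing $E^*\not\equiv +\infty$): one obtains that $F^*=E^{**}$ is submodular. By the Fenchel--Moreau theorem $E^{**}=E$, so $E$ itself is submodular.

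Since both directions reduce to routine application of already-proved theorems combined with biconjugation, there is no real obstacle; the only subtlety is the properness of $E^*$ needed to invoke \Cref{thm:duality-exch-submodular}, and the lower semicontinuity/convexity assumption on $E$ is used precisely for the identity $E^{**}=E$. A short two-line argument in each direction should suffice.
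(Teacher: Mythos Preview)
Your proposal is correct and is exactly the approach the paper intends: it states the corollary as ``a direct consequence of the preceding two theorems'' without spelling out a proof, and your argument supplies precisely those two applications plus the Fenchel--Moreau identity $E^{**}=E$. The only detail you add beyond the paper is the observation that $E^*$ is proper (needed to invoke \Cref{thm:duality-exch-submodular}), which is indeed implicit in the hypotheses on $E$.
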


We now set out to prove Theorems \ref{thm:duality-submodular-exch} and \ref{thm:duality-exch-submodular}. 
Both can be deduced from a property of the duality bracket which could be described as a type of joint submodularity--substitutability.

\begin{lemma}\label{lemma:duality-bracket}
    Let $\phi_1,\phi_2\in X$ and $\mu_1,\mu_2\in X^*$. There exists $t_{12}\in [0,(\mu_1-\mu_2)^+]$ such that for every $t_{21}\in [0,(\mu_2-\mu_1)^+]$,
    \begin{equation} \label{eq:lemma:duality-bracket}
        \bracket{\mu_1+t_{21}-t_{12},\phi_1}+
        \bracket{\mu_2-t_{21}+t_{12},\phi_2}
        \leq
        \bracket{\mu_1,\phi_1\wedge\phi_2}+
        \bracket{\mu_2,\phi_1\vee\phi_2}.
    \end{equation}
\end{lemma}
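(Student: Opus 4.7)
The plan is first to convert the target inequality into a statement about the single scalar quantity $\phi_1-\phi_2$. Using the identities $\phi_1\wedge\phi_2=\phi_1-(\phi_1-\phi_2)^+$ and $\phi_1\vee\phi_2=\phi_2+(\phi_1-\phi_2)^+$, the right-hand side of \eqref{eq:lemma:duality-bracket} rewrites as $\bracket{\mu_1,\phi_1}+\bracket{\mu_2,\phi_2}+\bracket{\mu_2-\mu_1,(\phi_1-\phi_2)^+}$. Subtracting $\bracket{\mu_1,\phi_1}+\bracket{\mu_2,\phi_2}$ from both sides, the lemma reduces to producing $t_{12}\in[0,(\mu_1-\mu_2)^+]$ such that
\[
\bracket{t_{21}-t_{12},\phi_1-\phi_2}\leq \bracket{\mu_2-\mu_1,(\phi_1-\phi_2)^+}
\]
holds for every $t_{21}\in[0,(\mu_2-\mu_1)^+]$.

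Next I would invoke \Cref{lemma:prop-segments-ast}(ii) twice with the same test vector $\phi_1-\phi_2$. Applied to the interval $[0,(\mu_2-\mu_1)^+]$, it gives the uniform bound
\[
\sup_{t_{21}\in[0,(\mu_2-\mu_1)^+]} \bracket{t_{21},\phi_1-\phi_2} = \bracket{(\mu_2-\mu_1)^+,(\phi_1-\phi_2)^+}.
\]
Applied to $[0,(\mu_1-\mu_2)^+]$, and using that this segment is weak-$*$ compact so the max is attained, it produces an element $t_{12}\in[0,(\mu_1-\mu_2)^+]$ satisfying
\[
\bracket{t_{12},\phi_1-\phi_2}=\bracket{(\mu_1-\mu_2)^+,(\phi_1-\phi_2)^+}.
\]
This is my choice of $t_{12}$; importantly it depends only on $(\phi_1,\phi_2,\mu_1,\mu_2)$ and not on $t_{21}$, as required by the statement.

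Finally I would combine the two identities using the elementary decomposition $\mu_2-\mu_1=(\mu_2-\mu_1)^+-(\mu_1-\mu_2)^+$. Adding $\bracket{(\mu_1-\mu_2)^+,(\phi_1-\phi_2)^+}$ to the above displayed supremum gives $\bracket{(\mu_2-\mu_1)^++(\mu_1-\mu_2)^+,(\phi_1-\phi_2)^+}$, while subtracting it (as $-\bracket{t_{12},\phi_1-\phi_2}$) leaves precisely $\bracket{\mu_2-\mu_1,(\phi_1-\phi_2)^+}$ on the right-hand side. Thus for any admissible $t_{21}$ one obtains $\bracket{t_{21}-t_{12},\phi_1-\phi_2}\leq \bracket{\mu_2-\mu_1,(\phi_1-\phi_2)^+}$, which is the reduced inequality.

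The main obstacle is the construction of $t_{12}$ itself: in a general Banach lattice there is no analogue of restricting to the sublevel set $\{\phi_1\geq\phi_2\}$, so one cannot produce $t_{12}$ by a hands-on localization argument. The attainment part of \Cref{lemma:prop-segments-ast}(ii), whose proof rests on Hahn--Banach, is exactly the abstract substitute that makes the existence of the required $t_{12}$ possible in this generality.
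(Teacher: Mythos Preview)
Your proof is correct and follows essentially the same route as the paper: both reduce the inequality to a statement about $u=\phi_1-\phi_2$, choose $t_{12}$ as the maximizer of $\bracket{m,u}$ over $[0,(\mu_1-\mu_2)^+]$ via \Cref{lemma:prop-segments-ast}(ii), and then bound $\bracket{t_{21},u}$ uniformly by $\bracket{(\mu_2-\mu_1)^+,u^+}$. The only cosmetic difference is that the paper splits $u=u^+-u^-$ explicitly in the final estimate, whereas you keep $u$ together and invoke \Cref{lemma:prop-segments-ast}(ii) a second time for the upper bound on $\bracket{t_{21},u}$.
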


Note that contrary to the case for substitutable functions, $t_{12}$ is independent of $t_{21}$ in \Cref{lemma:duality-bracket}. However $t_{12}$ depends on $\phi_1,\phi_2$, and $\mu_1,\mu_2$. 

\begin{proof}[Proof of \Cref{lemma:duality-bracket}]
    Fix $\phi_1,\phi_2\in X$, $\mu_1,\mu_2\in X^*$. We want to find $t_{12}\in [0,(\mu_1-\mu_2)^+]$ such that \eqref{eq:lemma:duality-bracket} holds for every $t_{21}\in [0,(\mu_2-\mu_1)^+]$, 
    i.e.\ rearranging terms and denoting $u=\phi_1-\phi_2$, such that for all $t_{21}\in [0,(\mu_2-\mu_1)^+]$,
    \begin{equation}
        \bracket{\mu_1,u^+}
        +\bracket{t_{21},u}\leq \bracket{\mu_2,u^+}
        +\bracket{t_{12},u}.
    \end{equation}
    The above inequality can be written as
    \begin{equation}
        \bracket{(\mu_1-\mu_2)^+,u^+}
        +\bracket{t_{21},u^+}
        \leq \bracket{(\mu_2-\mu_1)^+,u^+}
        +\bracket{t_{12},u}
        +\bracket{t_{21},u^-}.
    \end{equation}
    By \Cref{lemma:prop-segments-ast}, choosing $t_{12}\in\argmax_{m\in [0,(\mu_1-\mu_2)^+]}\bracket{m,u}$ gives $\bracket{t_{12},u}=\bracket{(\mu_1-\mu_2)^+,u^+}$. To conclude, note that every $t_{21}\in [0,(\mu_2-\mu_1)^+]$ satisfies $\bracket{t_{21},u^+}\leq \bracket{(\mu_2-\mu_1)^+,u^+}$ and $\bracket{t_{21},u^-}\geq 0$. 
\end{proof}

With the help of \Cref{lemma:duality-bracket} we may now prove Theorems \ref{thm:duality-submodular-exch} and \ref{thm:duality-exch-submodular}.

\begin{proof}[Proof of \Cref{thm:duality-submodular-exch}]
    Fix $\mu_1,\mu_2\in X^*$ and $t_{21}\in [0,(\mu_2-\mu_1)^+]$. Let $K=[0,(\mu_1-\mu_2)^+]$. By \Cref{lemma:prop-segments-ast}(i) $K$ is a weak-$*$ compact subset of $X^*$. Let $\phi_1,\phi_2\in X$. By \Cref{lemma:duality-bracket} we have 
    \begin{equation*}
        \min_{s\in K}\bracket{\mu_1+t_{21}-s,\phi_1} + \bracket{\mu_2-t_{21}+s,\phi_2} \leq \bracket{\mu_1,\phi_1\wedge\phi_2}
        +\bracket{\mu_2,\phi_1\vee\phi_2}.
    \end{equation*}    
    This inequality in fact holds uniformly over $t_{21}\in [0,(\mu_2-\mu_1)^+]$, but we don't use this here. Using that $E_1 \leP E_2$, we have 
    \begin{align*}
        &\min_{s\in K}\bracket{\mu_1+t_{21}-s,\phi_1} -E_1(\phi_1)+ 
        \bracket{\mu_2-t_{21}+s,\phi_2}-E_2(\phi_2) \\
        \leq  &
        \bracket{\mu_1,\phi_1\wedge\phi_2}-E_1(\phi_1\wedge\phi_2)
        +\bracket{\mu_2,\phi_1\vee\phi_2} - E_2(\phi_1\vee\phi_2)\\
        \leq & E_1^*(\mu_1) + E_2^*(\mu_2).
    \end{align*}    
    Therefore 
    \begin{equation*}
        \sup_{\phi_1,\phi_2\in X}\min_{s\in K}\bracket{\mu_1+t_{21}-s,\phi_1} -E_1(\phi_1)+ 
        \bracket{\mu_2-t_{21}+s,\phi_2}-E_2(\phi_2) \leq E_1^*(\mu_1) + E_2^*(\mu_2).
    \end{equation*}
    Define $\phi=\phi_1\oplus\phi_2$ and $L(s,\phi)=\bracket{\mu_1+t_{21}-s,\phi_1}+\bracket{\mu_2-t_{21}+s,\phi_2}-E_1(\phi_1)-E_2(\phi_2)$. The function $L$ is convex and lower semicontinuous in $s$ on the compact set $K$, and concave in $\phi$. Therefore we have a minimax theorem, see e.g. \cite[Theorem 3.1]{SimonsMinimaxBook}. We obtain 
    \begin{equation*}
        \min_{s\in K}\sup_{\phi_1,\phi_2\in X}\bracket{\mu_1+t_{21}-s,\phi_1} -E_1(\phi_1)+ 
        \bracket{\mu_2-t_{21}+s,\phi_2}-E_2(\phi_2) \leq E_1^*(\mu_1) + E_2^*(\mu_2).
    \end{equation*}
    We deduce the existence of some $t_{12}=s\in K$ such that 
    \begin{equation*}
        E_1^*(\mu_1+t_{21}-t_{12})+
        E_2^*(\mu_2-t_{21}+t_{12})
        \leq E_1^*(\mu_1) + E_2^*(\mu_2).
    \end{equation*}
\end{proof}

\begin{proof}[Proof of \Cref{thm:duality-exch-submodular}]
    Fix $\phi_1,\phi_2\in X$. Let $\mu_1, \mu_2\in X^*$. Applying \Cref{lemma:duality-bracket} to $\tilde\mu_1\coloneqq -\mu_1$ and $\tilde\mu_2\coloneqq -\mu_2$, there exists $\tilde t_{12}\in [0,(\tilde\mu_1-\tilde\mu_2)^+]=[0,(\mu_2-\mu_1)^+]$ such that the inequality 
    \begin{equation}\label{eq:proof-lemma:duality-exch-submodular-1}
        \bracket{\tilde\mu_1+s-\tilde t_{12},\phi_1}+
        \bracket{\tilde\mu_2-s+\tilde t_{12},\phi_2}
        \leq
        \bracket{\tilde\mu_1,\phi_1\wedge\phi_2}+
        \bracket{\tilde\mu_2,\phi_1\vee\phi_2}
    \end{equation}    
    holds for every $s\in [0,(\tilde\mu_2-\tilde\mu_1)^+]=[0,(\mu_1-\mu_2)^+]$. Let us rewrite \eqref{eq:proof-lemma:duality-exch-submodular-1} as 
    \begin{equation}\label{eq:proof-lemma:duality-exch-submodular-2}
        \bracket{\mu_1,\phi_1\wedge\phi_2}+
        \bracket{\mu_2,\phi_1\vee\phi_2} 
        \leq 
        \bracket{\mu_1-s+\tilde t_{12},\phi_1}+
        \bracket{\mu_2+s-\tilde t_{12},\phi_2}.
    \end{equation}   
    Using now that $F_2\leQ F_1$, by choosing $t_{21}\coloneqq \tilde t_{12}\in [0,(\mu_2-\mu_1)^+]$ we know that there exists $t_{12}\in [0,(\mu_1-\mu_2)^+]$ such that 
    \begin{equation}\label{eq:proof-lemma:duality-exch-submodular-3}
        F_1(\mu_1+t_{21}-t_{12}) + 
        F_2(\mu_2-t_{21}+t_{12})\leq
        F_1(\mu_1)+F_2(\mu_2).
    \end{equation}
    Combining \eqref{eq:proof-lemma:duality-exch-submodular-3} and \eqref{eq:proof-lemma:duality-exch-submodular-2} with $s=t_{12}$, we obtain  
    \begin{align*}\label{eq:proof-lemma:duality-exch-submodular-4}
        & \bracket{\mu_1,\phi_1\wedge\phi_2}-F_1(\mu_1)+
        \bracket{\mu_2,\phi_1\vee\phi_2}-F_2(\mu_2) \\
        \leq &\bracket{\mu_1-t_{12}+t_{21},\phi_1}-F_1(\mu_1+t_{21}-t_{12})+
        \bracket{\mu_2+t_{12}-t_{21},\phi_2}-F_2(\mu_2-t_{21}+t_{12}).        
    \end{align*}   
    Since the terms in $X^*$ which appear in the right-hand side match exactly two by two, we bound by the supremum over $X^*$ and obtain
    \begin{equation*}
        \bracket{\mu_1,\phi_1\wedge\phi_2}-F_1(\mu_1)+
        \bracket{\mu_2,\phi_1\vee\phi_2}-F_2(\mu_2) \leq  F_1^*(\phi_1)+F_2^*(\phi_2).
    \end{equation*}
    Since this last inequality holds for arbitrary $\mu_1$ and $\mu_2\in X^*$, we deduce the desired result.
\end{proof}

\section{Comparison principles for optimal transport}\label{sec:cp-exch}

This section contains applications of submodularity and substitutability to optimal transport. 
We present comparison principles for the standard optimal transport problem, the entropic transport problem, and the unbalanced transport problem, in general settings: infinite-dimensional source and target spaces, plainly continuous cost function, arbitrary source and target measures with no regularity assumptions.

\subsection{Basic setting, definitions and notations}\label{sec:cp-basic-setting}

We consider two compact metric spaces $\Omegax$ and $\Omegay$, and a 
continuous function $c\colon \Omegax\times\Omegay\to\R$. 
Let $C(A)$, $\Mc(A)$, $\Mp(A)$ and $\Pc(A)$ respectively denote the sets of continuous functions, finite signed Borel measures, finite positive Borel measures, and Borel probability measures on $A$.

\paragraph{Supports of measures.}
Let $\mu\in\Mp(\Omega)$. We recall that there exists an open set $U\subset\Omega$ such that for any open set $O \subset\Omega$, $\mu(O)=0$ if and only if $O\subset U$. Therefore $U$ is the largest open set $O$ such that $\mu(O)=0$. The complement of $U$ is the \emph{support} of $\mu$, denoted by $\supp\mu$. It is the smallest closed set $C$ on which $\mu$ is concentrated. We say that $\mu$ has \emph{full support} if $\supp\mu=\Omega$. 

Consider now a signed measure $\sigma\in\Mc(\Omega)$. The support of $\sigma$ is defined as the support of its total variation $\abs\sigma$. Let $B$ be a Borel set. $B$ is called a \emph{positive set} for $\sigma$ if $\sigma(A)\geq 0$ for all Borel sets $A\subset B$. This is equivalent to $\inf_{A\subset B}\sigma(A)=0$, where the infimum runs over Borel sets $A$, since $\sigma(\emptyset)=0$. By the expression of the negative part, we find that $B$ is a positive set if and only if $\sigma^-(B)=0$. 

We will often write ``$\sigma \geq 0$ on $B$'' to say that $B$ is a positive set for $\sigma$. Frequently, $\sigma$ is given as the difference of two measures $\mu_1$ and $\mu_2$; then we write that ``$\mu_1\leq\mu_2$ on $B$'' to express that $B$ is a positive set for $\mu_2-\mu_1$. 

The preceding definitions and observations imply that given $\sigma\in\Mc(\Omega)$, there exists a largest open set $U$ which is a positive set for $\sigma$; it is given by $U=\Omega\setminus(\supp\sigma^-)$.

\paragraph{Standard optimal transport.} 
Given two measures $\mu\in\Mp(\Omegax)$ and $\nu\in\Mp(\Omegay)$, the optimal transport cost is defined by 
\begin{equation}\label{eq:ot-problem}
    \Tc_c(\mu,\nu) = \inf_{\pi\in\Pi(\mu,\nu)} \int_{\Omegax\times\Omegay} c(x,y)\,d\pi(x,y).
\end{equation}
Here $\Pi(\mu,\nu)$ denotes the set of measures $\pi\in\Mp(\Omegax\times\Omegay)$ having first marginal $\mu$ and second marginal $\nu$. When $\mu$ and $\nu$ have  different total masses, the constraint in \eqref{eq:ot-problem} is empty and we set $\Tc_c(\mu,\nu)=+\infty$. We refer to \cite{VillaniBook2009,SantambrogioBook} for a standard exposition of the subject.

The dual formulation of optimal transport takes the form 
\begin{equation}\label{eq:ot-dual-phi-psi-formulation}
    \Tc_c(\mu,\nu) = \sup_{(\phi,\psi)\in A} \int_\Omegax \phi\,d\mu - \int_\Omegay \psi\,d\nu,
\end{equation}
where $A$ consists of all functions $\phi\in C(\Omegax), \psi\in C(\Omegay)$ such that 
\begin{equation}\label{eq:constraint-dual-ot}
    \forall x\in \Omegax,\forall y\in\Omegay,\quad \phi(x)-\psi(y)\leq c(x,y).
\end{equation}
In view of \eqref{eq:constraint-dual-ot} and the maximization over $\psi$, we may as well in \eqref{eq:ot-dual-phi-psi-formulation} take $\psi$ to be the \emph{$c$-transform} of $\phi$ defined by 
\begin{equation}\label{eq:def-c-transform}
    \phi^c(y)=\sup_{x\in \Omegax}\phi(x)-c(x,y).
\end{equation}
This is a continuous function over $\Omegay$ since it inherits the modulus of continuity of $c$ \cite[Box 1.8]{SantambrogioBook}. Then $\psi$ may be eliminated from \eqref{eq:ot-dual-phi-psi-formulation} leading to the alternative dual formulation 
\begin{equation}\label{eq:ot-dual-formulation}
    \Tc_c(\mu,\nu) = \sup_{\phi\in C(\Omegax)} \int_\Omegax \phi\,d\mu - \int_\Omegay \phi^c\,d\nu.
\end{equation}
Since $\Omegax$, $\Omegay$ are compact and $c$ is continuous, not only is there no duality gap so that the formulas \eqref{eq:ot-dual-phi-psi-formulation} and \eqref{eq:ot-dual-formulation} hold, but in fact maximizers always exist \cite[Theorem 1.39]{SantambrogioBook}. Therefore in \eqref{eq:ot-dual-phi-psi-formulation} and \eqref{eq:ot-dual-formulation} we may write ``max'' instead of ``sup''.

\paragraph{Entropic optimal transport.} 
Given $\eps>0$ and two non-null fixed reference measures $\alpha\in\Mp(\Omegax)$, $\beta\in\Mp(\Omegay)$, consider for $\mu\in\Mp(\Omegax)$ and $\nu\in\Mp(\Omegay)$ the entropic optimal transport problem
\begin{equation}\label{eq:reg-ot}
    \ET_{c,\eps}(\mu,\nu) = \inf_{\pi\in\Pi(\mu,\nu)} \int_{\Omegax\times\Omegay} c(x,y)\,d\pi(x,y) + \eps \KL(\pi\mid\alpha\otimes\beta).
\end{equation}
Given $f(s)\coloneqq s\log(s)-s$, the Kullback--Leibler divergence is defined by 
\begin{equation*}
    \KL(m_1 \mid m_2)=\begin{cases}
    \displaystyle\int f\Big(\frac{d m_1}{dm_2}\Big)\,dm_2&\quad\text{if $m_1$ is absolutely continuous w.r.t. $m_2$},\\
    +\infty&\quad\text{otherwise}.
    \end{cases}
\end{equation*}
When $\mu(\Omegax) \neq \nu(\Omegay)$, the infimum in \eqref{eq:reg-ot} runs over an empty set and we set $\ET_{c,\eps}(\mu,\nu)=+\infty$. For an introduction to entropic optimal transport, see \cite{Leonard2013ASO,nutz2021introduction}. 

Similar to the standard optimal transport case, there exist two dual formulations. The first one takes the form 
\begin{equation}\label{eq:eot-dual}
    \ET_{c,\eps}(\mu,\nu) = \sup_{\phi\in C(\Omegax), \psi\in C(\Omegay)} \int_\Omegax \phi\,d\mu - \int_\Omegay \psi\,d\nu - \eps\iint_{\Omegax\times\Omegay} e^{[\phi(x)-\psi(y)-c(x,y)]/\eps}\,d\alpha  d\beta.
\end{equation}
As before, $\psi$ may be eliminated from \eqref{eq:eot-dual} resulting in the second dual formulation
\begin{equation}\label{eq:eot-semidual}
    \ET_{c,\eps}(\mu,\nu) = \sup_{\phi\in C(\Omegax)} \int_\Omegax \phi\,d\mu - \int_\Omegay \Lc_{c,\eps}(\phi)\,d\nu + \eps\KL(\nu \mid \beta).
\end{equation}
Here $\Lc_{c,\eps}\colon C(\Omegax)\to C(\Omegay)$ is the ``soft $c$-transform'' operator defined by
\begin{equation}\label{eq:def-soft-c-transform}
    \Lc_{c,\eps}(\phi)(y) = \eps\log\Big(\int_\Omegax e^{[\phi(x)-c(x,y)]/\eps}\,d\alpha(x)\Big).
\end{equation}
It maps continuous functions to continuous function by the dominated convergence theorem.

\paragraph{Unbalanced optimal transport.}
Standard references for unbalanced optimal transport include 
\cite{LieroMielkeSavare2018,LieroMielkeSavare2016,LieroMielkeSavare2016,ChizatPeyreSchmitzerVialard2018JFA,KondratyevMonsaingeonVorotnikov2016}. 

The unbalanced optimal transport problem replaces the hard marginal constraints on transport plans with soft penalizations by internal energies (\cref{def:internal-energy}).

Let $h_0,h_1\colon[0,+\infty)\to[0,+\infty)$ be two entropy functions (\cref{def:entropy-function}). Here we assume for simplicity that the $h_i$ are finite-valued. Let $H_{h_i,\cdot}$ denote the associated internal energies. Given non-null $\mu\in\Mp(\Omegax)$ and $\nu\in\Mp(\Omegay)$, the unbalanced optimal transport problem is given by 
\begin{equation*}
    \UT_{h,c}(\mu,\nu) = \inf_{\pi \in \mathcal{M}_+(\Omegax\times \Omegay)} H_{h_0,\mu}(\pi_0) + H_{h_1,\nu}(\pi_1) + \int_{\Omegax\times \Omegay} c(x,y) \,d \pi(x,y).
\end{equation*}
Here $\pi_0$ and $\pi_1$ denote respectively the first and second marginals of $\pi$. 

Let $h_i^*\colon \R\to\R$ denote the convex conjugate of $h_i$. It is finite-valued by superlinearity of $h_i$, and nondecreasing since the domain of $h$ contains only nonnegative real numbers.  The unbalanced optimal transport problem then admits the dual formulation
\begin{equation}\label{eq:UOT-dual}
    \UT_{h,c}(\mu,\nu) = \sup_{(\phi,\psi)\in A} \int_\Omegax -h_0^*(-\phi(x))\,d\mu(x)-\int_\Omegay h_1^*(\psi(y))\,d\nu(y).
\end{equation}
Here 
\[
A=\{(\phi,\psi)\in C(\Omegax)\times C(\Omegay) : \forall x\in\Omegax,\forall y\in\Omegay,\quad\phi(x)-\psi(y)\leq c(x,y)\}
\]
denotes the same constraint set as for standard optimal transport. 
Since $h_1^*$ is nondecreasing, $\psi$ can be again eliminated from \eqref{eq:UOT-dual}, yielding the second dual formulation 
\begin{equation*}
    \UT_{h,c}(\mu,\nu) = \sup_{\phi\in C(\Omega)} \int_\Omegax -h_0^*(-\phi(x))\,d\mu(x)-\int_\Omegay h_1^*(\phi^c(y))\,d\nu(y).
\end{equation*}
Here $\phi^c$ denotes the $c$-transform \eqref{eq:def-c-transform}. By \cite[Theorem 4.14]{LieroMielkeSavare2018}, a maximizer $\phi$ always exists.

\subsection{Comparison principles for potentials}\label{sec:ot-cp-potnentials}

\subsubsection{Standard optimal transport}

Given $\mu\in\Pc(\Omegax)$ and $\nu\in\Pc(\Omegay)$, let us denote by $\Phi_c(\mu,\nu)$ the set of solutions to the dual formulation of the optimal transport problem~\eqref{eq:ot-dual-formulation} (the \emph{Kantorovich potentials}),
\begin{equation*}
    \Phi_c(\mu,\nu) = \left\{\phi\in C(\Omegax) : \int_\Omegax \phi\,d\mu - \int_\Omegay \phi^c\,d\nu  = \Tc_c(\mu,\nu)\right\}.
\end{equation*}
Under the assumptions that $c$ be continuous and $\Omegax,\Omegay$ compact this set is nonempty, i.e.\ a solution $\phi$ always exists \cite[Prop. 1.11]{SantambrogioBook}.

The main result of this section is the following comparison principle for Kantorovich potentials.

\begin{theorem}[Comparison principle for Kantorovich potentials]\label{thm:comp-principle-potentials}
    Let $\Omegax,\Omegay$ be two compact metric spaces and let $c\in C(\Omegax\times\Omegay)$. 
    Let $\mu_1,\mu_2$ be two probability measures on $\Omegax$ and let $\nu$ be a probability measure on $\Omegay$. For $i=1,2$, take $\phi_i\in\Phi_c(\mu_i,\nu)$.
    Let $U$ be a Borel subset of $\Omegax$ and suppose that $\mu_1\leq\mu_2$ on $U$ and that $\phi_1\leq\phi_2$ on $\Omegax\setminus U$. Then 
    \begin{equation}\label{eq:thm:comp-principle-potentials}
        \phi_1\wedge\phi_2\in\Phi_c(\mu_1,\nu) 
        \quad\text{and}\quad
        \phi_1\vee\phi_2\in\Phi_c(\mu_2,\nu).
    \end{equation}
    Additionally, $\phi_1 \leq \phi_2$ on the support of $\mu_2-\mu_1$.
\end{theorem}

We recall that $\mu_1\leq \mu_2$ on $U$ means that $\mu_1(A)\leq \mu_2(A)$ for every Borel set $A\subset U$, and that the support of the signed Borel measure $\mu_2-\mu_1$ is defined as the support of its total variation $\abs{\mu_2-\mu_1}$.  

We delay the proof of \cref{thm:comp-principle-potentials} until the end of the section to make some remarks and derive some corollaries.
First, let us discuss the relevance of controlling measures on $U$ and potentials on $\Omega\setminus U$ from a modeling perspective. 
The dual problem of optimal transport can be naturally interpreted as a principal--agent problem. Suppose that a principal wishes to guide a population of agents, distributed as $\nu$, 
toward a distribution of goods, described by a measure $\mu$. To achieve this, the principal sets a price $\phi(x)$ for each item $x$ within the set of goods $U\subset\Omegax$ \emph{that they control}; prices in the outside region $\Omegax\setminus U$ are considered fixed. Then each agent $y\in\Omegay$, acting in their own self-interest, chooses an item $x\in \Omegax$ that maximizes their personal payoff $\phi(x)-c(x,y)$. 
The principal's objective is then to design the price function $\phi$ on $U$ such that the resulting distribution of choices made by the agents is precisely $\mu$ on $U$. 
Therefore in that setting, the complement $\Omegax\setminus U$ may be thought of as the boundary to $U$, and the condition $\phi_1\leq\phi_2$ on $\Omegax\setminus U$ as a boundary condition. See also the case of the Monge--Ampère equation where $
\Omega$ is the closure of the open set $U$, in \Cref{cor:monge-ampere-comp} below.

As a first corollary to \cref{thm:comp-principle-potentials}, taking $\mu_1=\mu_2$ and $U=\Omega$ we obtain that $\Phi_c(\mu,\nu)$ is a sublattice of $C(\Omega)$.

\begin{corollary}
    Let $\mu\in\Pc(\Omegax)$ and $\nu\in\Pc(\Omegay)$. Then $\Phi_c(\mu,\nu)$ is closed under $\wedge$ and $\vee$.
\end{corollary}

Since $\Phi_c(\mu,\nu)$ is also closed under the addition of a constant, two elements $\phi_1,\phi_2\in\Phi_c(\mu,\nu)$ generate a whole class of Kantorovich potentials $(\phi_1+C_1)\wedge(\phi_2+C_2)$, $(\phi_1+C_1)\vee(\phi_2+C_2)$.

Let us now look at the case where at least one of the Kantorovich potentials is unique up to an additive constant: $\Phi_c(\mu_i,\nu)=\{\phi_i+C : C \in \R\}$ for some $\phi_i\in C(\Omegax)$ and $i=1$ or $2$. This situation commonly occurs when transport maps are unique. Here is a typical setting: 
\begin{equation}\label{eq:unique-potential}
    \begin{minipage}[c]{0.85\textwidth}
        $\Omegax,\Omegay$ are compact subsets of $\R^n$, $\Omegax$ is connected and has Lebesgue negligible boundary, $c(x,y)=\abs{x-y}^2$, $\mu_i$ has full support and is absolutely continuous with respect to the Lebesgue measure. 
    \end{minipage}
\end{equation}
When \eqref{eq:unique-potential} holds, $\phi_i$ is unique up to an additive constant \cite[Theorem 1.17]{SantambrogioBook}. The following result shows that in such a case we obtain the standard form of a comparison principle.

\begin{corollary}\label{cor:ot-std-cp}
    Let $\mu_i\in\Pc(\Omegax)$, $\nu\in\Pc(\Omegay)$, and $\phi_i\in\Phi_c(\mu_i,\nu)$. Suppose that at least one of the solutions $\phi_i$ is unique up to an additive constant. Consider a Borel set $U$ with $U\neq \Omega$ such that $\mu_1\leq\mu_2$ on $U$ and $\phi_1\leq\phi_2$ on $\Omega\setminus U$. Then $\phi_1\leq\phi_2$ on $\Omega$. 
\end{corollary}
\begin{proof}
    Suppose that $\phi_1$ is the unique Kantorovich potential. Then $\Phi_c(\mu_1,\nu)=\{\phi_1+C : C\in \R\}$. By \cref{thm:comp-principle-potentials}, we have $\phi_1\wedge\phi_2\in\Phi_c(\mu_1,\nu)$. Therefore $\phi_1\wedge\phi_2=\phi_1+C$ for some constant $C$. Evaluating this equality at some $x\in\Omega\setminus U$ where $(\phi_1\wedge\phi_2)(x)=\phi_1(x)$, we see that $C=0$. Therefore $\phi_1\wedge\phi_2=\phi_1$, which means that $\phi_1\leq\phi_2$. A similar reasoning can be conducted when the unique Kantorovich potential is $\phi_2$. 
\end{proof}

\begin{remark}
    A standard motivation to prove comparison principles is that they imply uniqueness of solutions. Our framework operates differently. When solutions are known to be unique \emph{by other means}, our method provides a comparison principle in the standard form ``$\mu_1\leq\mu_2\implies\phi_1\leq\phi_2$''. When solutions are not unique, \eqref{eq:thm:comp-principle-potentials} still provides a type of ordering of the solution sets $\Phi_c(\mu_1,\nu)$ and $\Phi_c(\mu_2,\nu)$. 
\end{remark}

The next result is a \textbf{maximum principle} for the difference of two Kantorovich potentials.

\begin{corollary}[Maximum principle for Kantorovich potentials]\label{cor:max-principle}
    Let $\mu_i\in\Pc(\Omegax)$ with $\mu_1\neq\mu_2$, $\nu\in\Pc(\Omegay)$, and $\phi_i\in\Phi_c(\mu_i,\nu)$. Suppose that at least one of the solutions $\phi_i$ is unique up to an additive constant. 
    Then 
    \begin{equation}\label{eq:cor:max-principle}
        \max_\Omega(\phi_1-\phi_2)=\max_{\supp(\mu_1-\mu_2)^+}(\phi_1-\phi_2).
    \end{equation}
\end{corollary}

From \cref{sec:cp-basic-setting} and using that $(\mu_1-\mu_2)^+=(\mu_2-\mu_1)^-$, we see that $U\coloneqq \Omega\setminus \supp(\mu_1-\mu_2)^+$ is the largest open set on which $\mu_1\leq\mu_2$. Therefore \eqref{eq:cor:max-principle} states that the maximum of $\phi_1-\phi_2$ is guaranteed to be attained at least on the complement of this set. By exchanging the roles of the indices $1$ and $2$ and multiplying both sides by $-1$, we see that \eqref{eq:cor:max-principle} also implies the minimum principle 
\begin{equation}\label{eq:cor:min-principle}
    \min_\Omega(\phi_1-\phi_2)=\min_{\supp(\mu_2-\mu_1)^+}(\phi_1-\phi_2),
\end{equation}

\begin{proof}[Proof of \cref{cor:max-principle}]
    Since $\mu_2-\mu_1$ has zero total mass but is not the null measure, its negative part $(\mu_2-\mu_1)^-$ is non null. Otherwise, by the Jordan decomposition $\mu_2-\mu_1=(\mu_2-\mu_1)^+ -(\mu_2-\mu_1)^-$, $(\mu_2-\mu_1)^+$ would be a positive measure with zero mass making it null, in turn making $\mu_2-\mu_1$ null. Therefore $(\mu_2-\mu_1)^-$ has nonempty support, and note that $(\mu_2-\mu_1)^-=(\mu_1-\mu_2)^+$. Let $U=\Omega\setminus \supp(\mu_2-\mu_1)^-$, an open and strict subset of $\Omega$. Recall that $U$ is a positive set for $\mu_2-\mu_1$. 

    Since $\phi_1$ and $\phi_2$ are continuous over the compact set $\Omega$, there exists a constant $C\in\R$ such that 
    \[
        \phi_1(x)\leq\phi_2(x)+C \text{ for all $x\in\Omega\setminus U$}.
    \]
    The smallest such constant is $C\coloneqq \sup_{\Omega\setminus U}(\phi_1-\phi_2)$. This ``$\sup$'' is in fact a ``$\max$'' since $\Omega\setminus U$ is compact as a closed subset of $\Omega$, and $\phi_1,\phi_2$ are continuous. Note also that $\phi_2+C\in\Phi_c(\mu_2,\nu)$ since Kantorovich potentials are defined up to a constant. By \cref{cor:ot-std-cp} we find that $\phi_1\leq\phi_2+C$ over all of $\Omega$, i.e.\ $\sup_\Omega(\phi_1-\phi_2)\leq C$. 
\end{proof}

A particular case of interest of the maximum principle is when the unique Kantorovich potential $\phi_i$ is given by a constant function. This situation occurs for instance for the quadratic cost $c(x,y)=\abs{x-y}^2$ on a compact subset of $\R^n$, when one of the source measures coincides with the target measure, and \eqref{eq:unique-potential} holds.

\begin{corollary}[Maximum principle for Kantorovich potentials II]
    Suppose that $\Omegax=\Omegay$. Let $\mu,\nu\in\Pc(\Omega)$ with $\mu\neq\nu$ and let $\phi\in\Phi_c(\mu,\nu)$. Suppose that $\Phi_c(\nu,\nu)$ consists exactly of the constant functions over $\Omega$. 
    Then 
    \[
    \max_\Omega\phi=\max_{\supp(\mu-\nu)^+}\phi,\qquad\text{and}\qquad\min_\Omega\phi=\min_{\supp(\nu-\mu)^+}\phi.
    \]
\end{corollary}
\begin{proof}
    Take $\mu_1=\mu$, $\mu_2=\nu$, $\phi_1=\phi$ and $\phi_2=0$ in \cref{cor:max-principle}. 
\end{proof}

We conclude this series of observations by pointing to \Cref{sec:monge-ampere} for a discussion on how \Cref{thm:comp-principle-potentials} relates to the comparison principle for the Monge--Ampère equation.

We now set out to prove \Cref{thm:comp-principle-potentials}, and \cref{cor:max-principle}. 
Given $\nu\in\Mp(\Omegay)$, let us introduce the dual functional $J_0\colon \Mp(\Omegax)\times C(\Omegax)\to\R$ defined by 
\begin{equation}
    J_0(\mu,\phi)=\int_\Omegay \phi^c\,d\nu - \int_\Omegax\phi\,d\mu.
\end{equation}
We are then interested in minimizing $J_0(\mu,\cdot)$, since $\Tc_c(\mu,\nu)=-\inf_{\phi\in C(\Omegax)} J_0(\mu,\phi)$ and $\Phi_c(\mu,\nu)=\argmin_{\phi\in C(\Omegax)}J_0(\mu,\phi)$. Note that $J_0$ is the sum of a $c$-transform functional which only depends on $\phi$, and a bilinear pairing of $\mu$ and $\phi$. It turns out that the $c$-transform is submodular: this is proven in \cref{lemma:K-submodular} below. Our first lemma is a general result which proves comparison principles in the form given by \cref{thm:comp-principle-potentials} for energies consisting of a submodular term and a bilinear term. We will reuse this blueprint for both the entropic and the unbalanced optimal transport problems, which share the same structure.

\begin{lemma}\label{lemma:vcp-J}
    Let $\Omega$ be a compact metric space and let $K\colon C(\Omega)\to\Rinf$ be a submodular function. Define $J\colon \Mp(\Omega)\times C(\Omega)\to\Rinf$ by 
    \begin{equation*}\label{eq:def-general-J}
    J(\mu,\phi)=K(\phi)-\int_\Omega \phi\,d\mu.
    \end{equation*}
    Given $\mu\in\Mp(\Omega)$, consider the solution set 
    \[
    \Phi(\mu)=\argmin_{\phi\in C(\Omega)} J(\mu,\phi).
    \]
    Let $\mu_1,\mu_2\in\Mp(\Omega)$, and let $\phi_i\in\Phi(\mu_i)$.
    Let $U$ be a Borel subset of $\Omega$ and suppose that $\mu_1\leq\mu_2$ on $U$ and that $\phi_1\leq\phi_2$ on $\Omega\setminus U$. Then 
    \begin{equation}
        \phi_1\wedge\phi_2\in\Phi(\mu_1) 
        \quad\text{and}\quad
        \phi_1\vee\phi_2\in\Phi(\mu_2).
    \end{equation}
    Additionally, $\phi_1 \leq \phi_2$ on the support of $\mu_2-\mu_1$.
\end{lemma}
\begin{proof}
    By submodularity of $K$ we have
    \begin{equation}\label{eq:proof-lemma:vcp-J-1}
    K(\phi_1\wedge\phi_2)+
        K(\phi_1\vee\phi_2)
        \leq 
        K(\phi_1)+
        K(\phi_2).
    \end{equation}
    Note that some of these terms may be infinite. Using the identities $(\phi_1\wedge\phi_2)-\phi_1=-(\phi_1-\phi_2)^+$ and $(\phi_1\vee\phi_2)-\phi_2=(\phi_1-\phi_2)^+$, we have 
    \begin{equation}\label{eq:proof-lemma:vcp-J-2}
    \int_\Omega\phi_1\,d\mu_1+\int_\Omega\phi_2\,d\mu_2=\int_\Omega(\phi_1\wedge\phi_2)\,d\mu_1+\int_\Omega(\phi_1\vee\phi_2)\,d\mu_2-\int_\Omega(\phi_1-\phi_2)^+\,d(\mu_2-\mu_1).
    \end{equation}
    Decomposing $\mu_2-\mu_1=(\mu_2-\mu_1)^+ -(\mu_2-\mu_1)^-$, we observe that $\int_\Omegax (\phi_1-\phi_2)^+\,d(\mu_2-\mu_1)^-= 0$ since $ \mu_2-\mu_1 \geq 0$ on $U$ and $(\phi_1 - \phi_2)^+ = 0$ on $\Omegax\setminus U$. Therefore  
    \begin{equation}\label{eq:proof-lemma:vcp-J-3}
    \int_\Omegax (\phi_1-\phi_2)^+\,d(\mu_2-\mu_1)
    =\int_\Omegax (\phi_1-\phi_2)^+\,d\abs{\mu_2-\mu_1}.
    \end{equation}
    Combining \eqref{eq:proof-lemma:vcp-J-1}, \eqref{eq:proof-lemma:vcp-J-2} and \eqref{eq:proof-lemma:vcp-J-3} we obtain 
    \begin{equation*}
         J(\mu_1,\phi_1\wedge\phi_2)+
        J(\mu_2,\phi_1\vee\phi_2)
        +\int_\Omegax (\phi_1-\phi_2)^+\,d\abs{\mu_2-\mu_1}
        \leq 
    J(\mu_1,\phi_1)+
        J(\mu_2,\phi_2)    .
    \end{equation*}    
    Since $\phi_i$ realizes the minimum of $J(\mu_i,\cdot)$, we directly deduce: 
    \begin{gather*}
        \phi_1\wedge\phi_2\in \argmin J(\mu_1,\cdot), \\
        \phi_1\vee\phi_2\in \argmin J(\mu_2,\cdot),\\ 
        \int_\Omegax (\phi_1-\phi_2)^+\,d\abs{\mu_2-\mu_1} =0.    
    \end{gather*}
    We conclude that $\phi_1\wedge\phi_2\in\Phi(\mu_1)$, $\phi_1\vee\phi_2\in\Phi(\mu_2)$, and that $(\phi_1-\phi_2)^+$ vanishes on the support of $\abs{\mu_2-\mu_1}$. 
\end{proof}

We continue with an elementary but powerful result from Crandall and Tartar on order-preserving mappings \cite{CrandallTartar1980}. We restrict their setting to ours, only state part of their result, and produce a variant of their proof below.

\begin{lemma}[{\cite[Prop. 2]{CrandallTartar1980}}]\label{lemma:Crandall-Tartar}
    Consider a mapping $T\colon C(\Omegax)\to C(\Omegay)$ which satisfies
    \begin{enumerate}[(a)]
        \item $\phi_1\leq\phi_2$ implies $T(\phi_1)\leq T(\phi_2)$ for all $\phi_1,\phi_2\in C(\Omega)$;
        \item $T(\phi+C)=T(\phi)+C$ for all $\phi\in C(\Omega)$ and  $C\in\R$.
    \end{enumerate}
    Then $T$ is $1$-Lipschitz: $\norm{T(\phi_1)-T(\phi_2)}_\infty \leq \norm{\phi_1-\phi_2}_\infty$. 
\end{lemma}
\begin{proof}
    From $\phi_1\leq\phi_1\vee\phi_2$ we deduce $T(\phi_1)\leq T(\phi_1\vee\phi_2)$, and exchanging $\phi_1$ and $\phi_2$ gives $T(\phi_2)\leq T(\phi_1\vee\phi_2)$. Therefore 
    \begin{equation*}
        T(\phi_1)\vee T(\phi_2)\leq T(\phi_1\vee\phi_2).
    \end{equation*}
    A similar reasoning with $\wedge$ instead of $\vee$ gives 
    \begin{equation*}
        T(\phi_1)\wedge T(\phi_2)\geq T(\phi_1\wedge\phi_2).
    \end{equation*}
    By the identity $\abs{f_1-f_2}=(f_1\vee f_2)-(f_1\wedge f_2)$ we obtain 
    \begin{equation}\label{eq:proof-lemma:Crandall-Tartar-1}
        \abs{T(\phi_1)-T(\phi_2)}\leq T(\phi_1\vee\phi_2)-T(\phi_1\wedge\phi_2).
    \end{equation}
    By the same identity $\phi_1\vee\phi_2-\phi_1\wedge\phi_2=\abs{\phi_1-\phi_2}$ we deduce 
    \begin{equation*}
        \phi_1\vee\phi_2\leq\phi_1\wedge\phi_2+\norm{\phi_1-\phi_2}_\infty.
    \end{equation*}
    Using hypotheses (a) and (b) we find 
    \begin{equation}\label{eq:proof-lemma:Crandall-Tartar-2}
        T(\phi_1\vee\phi_2)-T(\phi_1\wedge\phi_2)\leq \norm{\phi_1-\phi_2}_\infty.
    \end{equation}
    Combining \eqref{eq:proof-lemma:Crandall-Tartar-1} and \eqref{eq:proof-lemma:Crandall-Tartar-2} allows us to conclude.
\end{proof}

Coming back to the optimal transport setting, we are now ready to prove that the $c$-transform is submodular. Given $\nu\in\Pc(\Omegay)$, we define $K_0\colon C(\Omegax)\to\R$ by 
\[
K_0(\phi)=\int_\Omegay  \phi^c\,d\nu.
\]

\begin{lemma}\label{lemma:K-submodular}
    $K_0$ is submodular, convex, and  $1$-Lipschitz.
\end{lemma}
\begin{proof}
    Given $x\in \Omegax$ and $y\in\Omegay$, let us introduce the set 
    \begin{equation}
        A_{xy}=\{(\phi,\psi)\in C(\Omegax)\times C(\Omegay) : \phi(x)-\psi(y)\leq c(x,y)\}.
    \end{equation}
    $A_{xy}$ is clearly a closed convex subset of $C(\Omegax)\times C(\Omegay)$. Additionally, given $(\phi_1,\psi_1), (\phi_2,\psi_2)\in A_{xy}$ we have $\min(\phi_1(x),\phi_2(x))\leq \phi_1(x)\leq \psi_1(y)+c(x,y)$ and $\min(\phi_1(x),\phi_2(x))\leq \psi_2(y)+c(x,y)$, so that 
    \begin{equation}
        \min(\phi_1(x),\phi_2(x)) \leq \min(\psi_1(y),\psi_2(y)) +c(x,y).
    \end{equation}
    In other words, $(\phi_1\wedge\phi_2,\psi_1\wedge\psi_2)\in A_{xy}$. 
    Similarly, $\max(\phi_1(x),\phi_2(x)) \leq \max(\psi_1(y),\psi_2(y)) +c(x,y)$ which shows that $(\phi_1\vee\phi_2,\psi_1\vee\psi_2)\in A_{xy}$ . Therefore $A_{xy}$ is a lattice. 

    Define now $A=\bigcap_{x\in \Omegax,y\in\Omegay}A_{xy}$, i.e.\ 
    \begin{equation*}
        A = \{(\phi,\psi)\in C(\Omegax)\times C(\Omegay) : \forall x\in \Omegax,y\in\Omegay,\quad \phi(x)-\psi(y)\leq c(x,y)\}.
    \end{equation*}
    $A$ is a closed convex lattice subset of $C(\Omegax)\times C(\Omegay)$ as an intersection of such sets. As a consequence, its indicator function $\iota_A$ taking the value $0$ on $A$ and $+\infty$ outside of $A$ is a submodular convex lower semicontinuous function over $C(\Omegax)\times C(\Omegay)$.

    Given $\nu\in \Mp(\Omegay)$, define $E(\phi,\psi)=\int_\Omegay \psi\,d\nu+\iota_A(\phi,\psi)$. $E$ is a jointly submodular convex function as a sum of such functions. Then 
    \[
    K_0(\phi)=\int_\Omegay\phi^c\,d\nu=\inf_{\psi\in C(\Omegay)} E(\phi,\psi)
    \]
    is convex, and submodular by \Cref{lemma:P-order-min}.

    It remains to show that $K_0$ is $1$-Lipschitz. The $c$-transform $\phi\mapsto \phi^c$ with $\phi^c(y)=\sup_{x\in\Omega}\phi(x)-c(x,y)$ is clearly order-preserving and satisfies $(\phi+C)^c=\phi^c+C$ for any constant $C$. The Crandall--Tartar lemma (\cref{lemma:Crandall-Tartar}) then gives 
    \[
    \abs{\phi_1^c(y)-\phi_2^c(y)} \leq \Vert \phi_1 - \phi_2 \Vert_\infty.
    \]
    The desired result follows after integrating against $\nu$.
\end{proof}

With the preceding results in hand, the proof of \cref{thm:comp-principle-potentials} is a direct combination of \cref{lemma:vcp-J} and \cref{lemma:K-submodular}. Note that the convexity and continuity of $K$ proved in \cref{lemma:K-submodular} are not needed to apply \cref{lemma:vcp-J}.

\subsubsection{Link with the Monge--Ampère equation}\label{sec:monge-ampere}

In this section, we show that  \cref{thm:comp-principle-potentials} recovers the comparison principle for the Monge--Ampère equation (\cref{cor:monge-ampere-comp}). This comparison principle is a standard result which plays an important role in the theory of these nonlinear PDEs. We refer to the book \cite{FigalliMongeAmpereBook} for some background on the Monge--Ampère equation, and in particular to \cite[Theorem 2.10]{FigalliMongeAmpereBook} for the comparison principle.

Let us recall some standard notions of convex analysis used in the theory of Monge--Ampère equations. Given a nonempty bounded and merely open set $U\subset\R^n$, we say that $u\in C(\Ub)$ is convex if it can be extended into a function from 
$\R^n$ to $\R\cup\{+\infty\}$
which is convex in the classical sense \cite{Rockafellar1970}; in particular the extended convex function is proper and can be chosen lower semicontinuous.
The convex conjugate of $u$ is the function $u^*\colon\R^n\to\R$ defined by 
\begin{equation}\label{eq:def-convex-conjugate}
    u^*(y)=\sup_{x\in \Ub}\bracket{x,y}-u(x).
\end{equation}
Note that $u^*$ takes only finite values since $u\in C(\Ub)$ and $\Ub$ is compact. For the same reasons, ``$\sup$'' in \eqref{eq:def-convex-conjugate} can be replaced by ``$\max$''. The subdifferential of $u$ at $x\in \Ub$ is defined by 
\[
\partial u(x)=\{y\in\R^n : \forall z\in \Ub, u(x)+\bracket{x-z,y}\leq u(z)\}.
\]
Note that
\begin{equation}\label{eq:subdiff-and-argmin}
    y\in\partial u(x) \iff x\in\argmin_{z\in\Ub} u(z)-\bracket{z,y}.
\end{equation}
With these ingredients in hand, we may define the \emph{Monge--Ampère measure} associated to a convex $u\in C(\Ub)$. This is a locally finite positive Borel measure $\theta_u$ on $U$, defined by 
\[
\theta_u(E)=\Leb(\partial u(E)),
\]
where $\Leb$ denotes the Lebesgue measure on $\R^n$ and $\partial u(E)=\bigcup_{x\in E}\partial u(x)$. 

The main ingredient relating \cref{thm:comp-principle-potentials} to the Monge--Ampère equation is expressing the Monge--Ampère measure as a convex mapping of the Lebesgue measure (\cref{prop:MAM-convex-mapping}). To derive this result we start with some elementary facts. 
The first one is standard in convex analysis, we give a proof for completeness.

\begin{lemma}\label{lemma:subdiff-standard-inclusion}
    Let $x\in \Ub$ and $y\in \partial u(x)$. Then $x\in \partial u^*(y)$.
\end{lemma}
\begin{proof}
    Fix $x\in \Ub$ and $y\in \partial u(x)$. By definition of the subdifferential, we have $u(x)+u^*(y)\leq\bracket{x,y}$. By definition of the convex conjugate, we have $\bracket{x,y'}\leq u(x)+u^*(y')$ for all $y'\in\R^n$. Summing both inequalities gives $u^*(y)+\bracket{x,y'-y}\leq u^*(y')$ for all $y'\in\R^n$, or in other words $x\in\partial u^*(y)$. 
\end{proof}

Let us denote by $D(u^*)\subset\R^n$ the set of points where $u^*$ is differentiable. Since $u^*$ is a convex function over $\R^n$ taking finite values, $\R^n\setminus D(u^*)$ has zero Lebesgue measure \cite[Theorem 25.5]{Rockafellar1970}. Recall also that $\partial u^*(y)=\{\nabla u^*(y)\}$ whenever $y\in D(u^*)$, where $\nabla u^*$ denotes the gradient of $u^*$ \cite[Theorem 25.1]{Rockafellar1970}.

\begin{lemma}\label{lemma:nabla-ustar-unique-argmin}
    For every $y\in D(u^*)$, the function $(z\in\Ub)\mapsto u(z)-\bracket{z,y}$ attains its minimum at a unique point $x\in \Ub$, given by $x=\nabla u^*(y)$.
\end{lemma}
\begin{proof}
    Let $y\in\R^n$. Combining \eqref{eq:subdiff-and-argmin} and \cref{lemma:subdiff-standard-inclusion} we find that 
    \begin{equation}
        \argmin_{z\in\Ub} u(z)-\bracket{z,y}\subset\partial u^*(y).
    \end{equation}
    Suppose that $u^*$ is differentiable at $y$. On the one hand the $\argmin$ is nonempty since $u-\bracket{\cdot,y}$ is continuous on the compact set $\Ub$. On the other hand $\partial u^*(y)=\{\nabla u^*(y)\}$. We conclude that $\argmin_{z\in\Ub} u(z)-\bracket{z,y}=\{\nabla u^*(y)\}$. 
\end{proof}

Using \cref{lemma:nabla-ustar-unique-argmin} we obtain a partial converse to \cref{lemma:subdiff-standard-inclusion}.

\begin{lemma}\label{lemma:converse-subdiff-inclusion}
    Let $y\in D(u^*)$ and let $x=\nabla u^*(y)$. Then $y\in \partial u(x)$. 
\end{lemma}
\begin{proof}
    Let $y\in D(u^*)$. By \cref{lemma:nabla-ustar-unique-argmin}, $x\coloneqq \nabla u^*(y)\in\argmin_{z\in\Ub} u(z)-\bracket{z,y}$. Thus $y\in\partial u(x)$ by \eqref{eq:subdiff-and-argmin}. 
\end{proof}

Combining \cref{lemma:subdiff-standard-inclusion} and \cref{lemma:converse-subdiff-inclusion} directly gives 

\begin{lemma}\label{lemma:subdiff-nabla-ustar}
    For any Borel set $E\subset \Ub$, $(\nabla u^*)^{-1}(E)=\partial u(E)\cap D(u^*)$. 
\end{lemma}

By \cref{lemma:nabla-ustar-unique-argmin}, $\nabla u^*$ defines a mapping from $D(u^*)$ to $\Ub$. Since the complement of $D(u^*)$ in $\R^n$ has zero Lebesgue measure, we can consider the measure 
$
(\nabla u^*)_\#\Leb,
$
which is a positive Borel measure on the closure $\Ub$. 

\begin{proposition}[Monge--Ampère measure as a convex mapping]\label{prop:MAM-convex-mapping}
    The Monge--Ampère measure of $u$ coincides with 
    $(\nabla u^*)_\#\Leb$ on $U$, in the sense that $\theta_u(E)=[(\nabla u^*)_\#\Leb](E)$ for every Borel set $E\subset U$. 
\end{proposition}
\begin{proof}
    Fix a Borel set $E\subset U$. We have $\theta_u(E)=\Leb(\partial u(E))$ and $[(\nabla u^*)_\#\Leb](E)=\Leb((\nabla u^*)^{-1}(E))$. 
    Since the complement of $D(u^*)$ is a null set, $\Leb(\partial u(E))=\Leb(\partial u(E)\cap D(u^*))$.
    \cref{lemma:subdiff-nabla-ustar} then allows us to conclude.
\end{proof}

Since we work on compact sets it will be interesting to know when the Lebesgue measure in the above characterization of the Monge--Ampère measure can be restricted to a bounded set without losing any mass in $U$ or a subdomain of $U$.

\begin{lemma}\label{lemma:restriction-Lebesgue}
    Let $V\subset U$ and $B\subset\R^n$ denote two Borel sets
    such that $\partial u(V)\subset B$. Then the measures $(\nabla u^*)_\#\Leb$ and $(\nabla u^*)_\#(\Leb|_B)$ coincide on $V$, where $\Leb|_B$ denotes the restriction of the Lebesgue measure to $B$. 
\end{lemma}
\begin{proof}
    \cref{lemma:subdiff-nabla-ustar} shows that $(\nabla u^*)^{-1}(V)\subset B$, which suffices to prove the claim.
\end{proof}

\begin{proposition}[Comparison principle for the Monge--Ampère equation]\label{cor:monge-ampere-comp}
    Let $U$ be a bounded open subset of $\R^n$ and let $u_1,u_2\in C(\Ub)$ denote two convex functions. Let $\theta_i$ denote the Monge--Ampère measure associated with $u_i$, $i=1,2$. Suppose that $u_2\leq u_1$ on $\partial U$ and that $\theta_1\leq \theta_2$ on $U$. Then $u_2\leq u_1$ on $U$.
\end{proposition}

Here is the informal idea of the proof: since $u_i$ is convex, $\phi_i\coloneqq \abs{x}^2/2-u_i$ is a ``Kantorovich potential'' for the quadratic cost optimal transport between the measures $(\nabla u_i^*)_\#\Leb$ on $\Ub$ and $\Leb$ on $\R^n$, and using that on $U$: $\theta_1=(\nabla u_1^*)_\#\Leb\leq \theta_2=(\nabla u_2^*)_\#\Leb$, and on $\partial U$: $\phi_1\leq\phi_2$, we deduce from our comparison principle in \cref{thm:comp-principle-potentials} that $\phi_1\leq\phi_2$ on $U$, yielding the desired result. However $\Leb$ has infinite mass, making the optimal transport problem not well-defined, and moreover our comparison principle is stated on compact spaces, making the above argument only formal. If $\partial u_1(U)$ and $\partial u_2(U)$ happen to be bounded subsets of $\R^n$ then the above argument can be made rigorous, but in general we need to reason by approximation.

\begin{proof}[Proof of \cref{cor:monge-ampere-comp}]
    Let $k\geq 1$ be a natural number. By compactness of $\Ub$, $u_1$ and $u_2$ are uniformly continuous over $\Ub$. Therefore there exists $\delta_k>0$ such that $\abs{u_i(x)-u_i(z)} \leq 1/k$ for every $x,z\in \Ub$ such that $\abs{x-z}\leq\delta_k$, and for $i=1,2$. 
    Let $\Omegax=\Ub$, $\phi_{1,k}=\abs{x}^2/2-u_1$ and $\phi_{2,k}=\abs{x}^2/2-u_2+2/k$, and $U_k = \{x\in U : \dist(x,\partial U)> \delta_k\}$. Then $-u_1\leq -u_2$ on $\partial U$ implies $-u_1-1/k\leq -u_2+1/k$ on $\Omegax\setminus U_k$, therefore
    \begin{equation}\label{eq:proof-cor:monge-ampere-comp-1}
    \phi_{1,k}\leq\phi_{2,k}\quad\text{on }\Omegax\setminus U_k.
    \end{equation}
    Let $B_k\subset \R^n$ denote the closed ball of radius $R_k$ centered at $0$, with 
    \[
    R_k= \frac{\max\big\{\norm{u_1}_{L^\infty(\Ub)}, \norm{u_2}_{L^\infty(\Ub)}\big\}}{\delta_k}.
    \]
    Let $i=1$ or $2$. By a standard estimate, $\partial u_i(U_k)\subset B_k$ (see e.g.\ \cite[Lemma A.22]{FigalliMongeAmpereBook}). Let $\nu_k$ denote the restriction of the Lebesgue measure to $B_k$. 
    By \cref{lemma:restriction-Lebesgue}, $(\nabla u_i^*)_\#\Leb$ coincides with $\mu_{i,k}\coloneqq (\nabla u_i^*)_\# \nu_k$ on $U_k$. Then by \cref{prop:MAM-convex-mapping}, $\mu_{i,k}$ coincides with $\theta_i$ on $U_k\subset U$. Then $\theta_1\leq \theta_2$ implies 
    \begin{equation}\label{eq:proof-cor:monge-ampere-comp-2}
        \mu_{1,k}\leq\mu_{2,k}\quad\text{on } U_k.
    \end{equation}

    We have now reached a position where the measures $\mu_{i,k}$, $\nu_k$ are finite and the domains $\Omegax=\Ub$, $\Omegay\coloneqq B_k$ are compact. 
    Since $\nu_k$ is absolutely continuous and $u_i^*$ is convex, by Brenier's theorem $\nabla u_i^*$ is the unique optimal transport map from $\nu_k$ to $\mu_{i,k}$, see e.g.\ \cite[Theorem 1.17]{SantambrogioBook}; moreover since $B_k$ is connected, $\nabla u_i$ uniquely determines $u_i$ up an additive constant implying that $\phi_{i,k}$ is the unique Kantorovich potential up to an additive constant.  

    We may therefore use \cref{thm:comp-principle-potentials}, which is stated for probability measures but trivially extend to finite positive measures: combining \eqref{eq:proof-cor:monge-ampere-comp-1} and \eqref{eq:proof-cor:monge-ampere-comp-2} gives $\phi_{1,k}\leq \phi_{2,k}$ on $U_k$, i.e.\ 
    \[
    u_2\leq u_1+2/k\quad\text{on } U_k.
    \]
    Letting $k\to\infty$ gives us the desired result.

\end{proof}

\subsubsection{Entropic optimal transport}

In this section we prove comparison principles for entropic potentials by following the same strategy established for standard optimal transport.

Given $\eps>0$, two non-null reference measures $\alpha \in \Mp(\Omegax)$ and $\beta \in \Mp(\Omegay)$, and two probability measures $\mu\in\Pc(\Omegax)$ and $\nu\in \Pc(\Omegay)$, let us denote the set of solutions to the dual entropic transport problem by 
\begin{equation}
    \Phi_{c,\eps}(\mu,\nu)=\left\{\phi\in C(\Omegax) : \int_\Omegax \phi\,d\mu - \int_\Omegay\Lc_{c,\eps}(\phi)\,d\nu +\eps \KL(\nu \mid \beta)= \ET_{c,\eps}(\mu,\nu)\right\}.
\end{equation}
The soft $c$-transform $\Lc_{c,\eps}$ is defined in \eqref{eq:def-soft-c-transform}. This notation suppresses the dependence on $\alpha$ and $\beta$, which are considered fixed throughout. 

\begin{theorem}[Comparison principle for entropic potentials]\label{thm:entropic-ot-comp}
    Let $\Omegax,\Omegay$ be two compact metric spaces and let $c\in C(\Omegax\times\Omegay)$. 
    Let $\mu_1,\mu_2$ be two probability measures on $\Omegax$ and $\nu$ a probability measure on $\Omegay$. For $i=1,2$, let $\phi_i\in\Phi_{c,\eps}(\mu_i,\nu)$.
    Let $U$ be a Borel subset of $\Omegax$ and suppose that $\mu_1\leq\mu_2$ on $U$ and that $\phi_1\leq\phi_2$ on $\Omegax\setminus U$. 
    Then 
    \begin{equation}\label{eq:thm:comp-principle-eot-potentials}
        \phi_1\wedge\phi_2\in\Phi_{c,\eps}(\mu_1,\nu) 
        \quad\text{and}\quad
        \phi_1\vee\phi_2\in\Phi_{c,\eps}(\mu_2,\nu).
    \end{equation}
    Additionally, $\phi_1 \leq \phi_2$ on the support of $\mu_2-\mu_1$.
\end{theorem}

To prove \cref{thm:entropic-ot-comp}, we follow the same approach used in the previous section. Given $\nu\in\Mp(\Omegay)$, we introduce the functionals $K_\eps\colon C(\Omega)\to\R$ and $J_\eps\colon \Mp(\Omegax)\times C(\Omegax)\to\R$ defined by 
\begin{equation*}
    K_\eps(\phi)=\int_\Omegay \Lc_{c,\eps}(\phi)\,d\nu, 
\end{equation*}
\begin{equation*}
    J_\eps(\mu,\phi)=K_\eps(\phi)- \int_\Omegax\phi\,d\mu.
\end{equation*}
In particular note that 
\begin{equation*}
    \ET_{c,\eps}(\mu,\nu)=-\inf_{\phi\in C(\Omegax)} J_\eps(\mu,\phi),
\end{equation*}
\begin{equation*}    
    \Phi_{c,\eps}(\mu,\nu)=\argmin_{\phi\in C(\Omegax)}J_\eps(\mu,\phi).
\end{equation*}
Therefore \cref{thm:entropic-ot-comp} directly follows from \cref{lemma:vcp-J}, provided that $K_\eps$ is subdmodular.

\begin{lemma}\label{lemma:K-eps-submodular}
    $K_\eps$ is submodular, convex and $1$-Lipschitz.
\end{lemma}
\begin{proof}
    Let $x\in \Omegax$ and $y \in \Omegay$. By \cref{lemma:convexdiff_submodular}, convexity of the exponential implies submodularity of the function $(a,b)\mapsto e^{[a-b-c(x,y)]/\eps}$ over $\R\times\R$. As a consequence, we have for any $\phi_1,\phi_2 \in C(\Omegax)$ and $\psi_1,\psi_2 \in C(\Omegay)$ that
    \begin{multline*}
        e^{[\phi_1(x)\wedge\phi_2(x)-\psi_1(y)\wedge\psi_2(y)-c(x,y)]/\eps} + e^{[\phi_1(x)\vee\phi_2(x)-\psi_1(y)\vee\psi_2(y)-c(x,y)]/\eps}\\
        \leq e^{[\phi_1(x)-\psi_1(y)-c(x,y)]/\eps} + e^{[\phi_2(x)-\psi_2(y)-c(x,y)]/\eps}.
    \end{multline*}
    Integrating this inequality with respect to $\alpha$ and $\nu$ we obtain that 
    \begin{equation*}
    (\phi,\psi)\mapsto\iint_{\Omegax\times\Omegay} e^{[\phi(x)-\psi(y)-c(x,y)]/\eps} d\alpha(x)d\nu(y)
    \end{equation*}
    is a jointly submodular convex function. Therefore the function $E\colon C(\Omegax)\times C(\Omegay)\to\R$ defined by 
    \begin{equation*}
    E(\phi,\psi)=\int_\Omegay \psi\,d\nu+\eps\iint_{\Omegax\times\Omegay} e^{[\phi(x)-\psi(y)-c(x,y)]/\eps} d\alpha(x)d\nu(y)
    \end{equation*}
    is also jointly submodular and convex, as a sum of such functions. Notice that 
    \[
    \inf_{\psi\in C(\Omegay)} E(\phi,\psi)=\int_\Omegay\Lc_{c,\eps}(\phi)\,d\nu=K_\eps(\phi). 
    \]
    $K_\eps$ is therefore convex and, using \cref{lemma:P-order-min}, submodular.

    It remains to show that $K_\eps$ is $1$-Lipschitz. $\Lc_{c,\eps}$ is clearly order-preserving and satisfies $\Lc_{c,\eps}(\phi+C)=\Lc_{c,\eps}(\phi)+C$ for any constant $C$. The Crandall--Tartar lemma (\cref{lemma:Crandall-Tartar}) then allows us to conclude, as in the proof of \cref{lemma:K-submodular}.
\end{proof}

We conclude this section by pointing out that a similar maximum principle as the ones described in \cref{cor:max-principle} can be derived, since $\phi\mapsto J_\eps(\mu,\phi)$ is invariant under the addition of a constant.

\subsubsection{Unbalanced optimal transport}

Let $\Omegax,\Omegay$ be two compact metric spaces, let $c\in C(\Omegax\times\Omegay)$ and consider two finite-valued entropy functions $h_0, h_1\colon [0,+\infty)\to[0,+\infty)$. Given two positive measures $\mu\in\Mp(\Omegax)$ and $\nu\in\Mp(\Omegay)$, we denote the set of solutions to the dual unbalanced optimal transport problem by 
\begin{equation}
    \Phi_{h,c}(\mu,\nu)=\Big\{\phi\in C(\Omegax) : \int_\Omegax -h_0^*(-\phi(x))\,d\mu-\int_\Omegay h_1^*(\phi^c(y))\,d\nu=\UT_{h,c}(\mu,\nu)\Big\}.
\end{equation}
Under our assumptions,  $\Phi_{h,c}(\mu,\nu)$ is nonempty \cite[Theorem 4.14]{LieroMielkeSavare2018}.

\begin{theorem}[Comparison principle for unbalanced potentials]\label{thm:uot-pot}
    Let $\mu_i\in \Mp(\Omegax)$, $\nu\in\Mp(\Omegay)$, and $\phi_i\in\Phi_{h,c}(\mu_i,\nu)$.
    Let $U$ be a Borel subset of $\Omegax$ and suppose that $\mu_1\leq\mu_2$ on $U$ and that $\phi_1\leq\phi_2$ on $\Omegax\setminus U$. 
    Then 
    \begin{equation}\label{eq:thm:comp-principle-uot-potentials}
        \phi_1\wedge\phi_2\in\Phi_{h,c}(\mu_1,\nu) 
        \quad\text{and}\quad
        \phi_1\vee\phi_2\in\Phi_{h,c}(\mu_2,\nu).
    \end{equation}
    Additionally, $-h_0^*(-\phi_1(x)) \leq -h_0^*(-\phi_2(x))$ for $x\in\supp(\mu_2-\mu_1)$.
\end{theorem}

Before giving the proof of \cref{thm:uot-pot} let us state a consequence specific to the unbalanced problem. Since now $\mu_1$ and $\mu_2$ may have different total masses, the case $\mu_1\leq\mu_2$ over all of $\Omegax$ becomes more interesting. The following result is a direct consequence of \cref{thm:uot-pot} in the case of a perturbation with full support.

\begin{corollary}\label{cor:uot-pot}
    Suppose that the function $s\mapsto h_0^*(s)$ is strictly increasing. Let $\eta\in\Mp(\Omegax)$ be a positive measure with full support. Let $\mu_1\in\Mp(\Omegax)$, $\nu\in\Mp(\Omegay)$, and define $\mu_2=\mu_1+\eta$. Take $\phi_i\in\Phi_c(\mu_i,\nu)$. Then $\phi_1\leq\phi_2$ over $\Omegax$.
\end{corollary}

The conclusion of \cref{cor:uot-pot} gives a comparison between the solution sets $\Phi_{h,c}(\mu_1,\nu)$ and $\Phi_{h,c}(\mu_2,\nu)$ which is very strong: every element of the first set is smaller than every element of the second set. We now proceed with the proof of the theorem. 

\begin{proof}[Proof of \cref{thm:uot-pot}]
    The argument is a variant of the one used for the standard and the entropic optimal transport problems, and is based on the following observation: whenever $f\colon \R\to\R$ is a nondecreasing function, we have for any functions $\zeta_1,\zeta_2$ that 
    \begin{equation}\label{eq:monotone-meet-join}
        \begin{aligned}
        f\big(\zeta_1(x)\wedge\zeta_2(x)\big)&=f\big(\zeta_1(x)\big)\wedge f\big(\zeta_2(x)\big),\\
        f\big(\zeta_1(x)\vee\zeta_2(x)\big)&=f\big(\zeta_1(x)\big)\vee f\big(\zeta_2(x)\big).    
        \end{aligned}        
    \end{equation}
    Define $K_{h_1}\colon C(\Omegax)\to\R$ by 
    \begin{equation*}
        K_{h_1}(\phi)=\int_\Omegay h_1^*(\phi^c(y))\,d\nu.    
    \end{equation*}
    Then $K_{h_1}(\phi)=\inf_{\psi\in C(\Omegay)} \int_\Omegay h_1^*(\psi(y))\,d\nu + \iota_A(\phi,\psi)$, where $A$ denotes the same set as in the proof of \cref{lemma:K-submodular}. Using \eqref{eq:monotone-meet-join} we find that $E$ is jointly submodular, hence $K_{h_1}$ is submodular. Define now $J_h\colon \Mp(\Omegax)\times C(\Omegax)\to\R$ by     
    \begin{equation*}
    J_h(\mu,\phi)=K_{h_1}(\phi)+\int_\Omegax h_0^*(-\phi(x))\,d\mu.      
    \end{equation*}
    Given $\mu_1,\mu_2\in \Mp(\Omegax)$ and $\phi_1,\phi_2\in C(\Omegax)$, following the same arguments as in the proof of \cref{lemma:vcp-J}, and using \eqref{eq:monotone-meet-join} again, we have that 
    \begin{multline*}
        J_h(\mu_1,\phi_1\wedge\phi_2)+
        J_h(\mu_2,\phi_1\vee\phi_2)
        +\int_\Omegax \big(-h_0^*(-\phi_1(x))+h_0^*(-\phi_2(x))\big)^+\,d\abs{\mu_2-\mu_1}\\
        \leq 
    J_h(\mu_1,\phi_1)+
        J_h(\mu_2,\phi_2)    .
    \end{multline*}
    The desired result immediately follows.
\end{proof}

\subsection{Comparison principles for JKO-type problems} \label{sec:cp-ot}

Let $\Omegax$ and $\Omegay$ be two compact metric spaces. The main result of this section (\Cref{thm:ot-cp}) is a comparison principle for ``proximal'' variational problems of the form: given $\mu\in\Mp(\Omegax)$, solve 
\begin{equation}\label{eq:ot-min-problem}
    \min_{\nu\in\Mp(\Omegay)} \Tc(\mu,\nu)+E(\nu).
\end{equation}
Here, $\Tc$ is a cost between positive measures which can be chosen among a broad class of transport costs. Specifically, we consider the three types introduced in \cref{sec:cp-basic-setting}:
\begin{enumerate}[label=Type (\Roman*):,labelwidth =\widthof{(Type III):}, leftmargin = !]
    \item The standard optimal transport cost $\Tc(\mu,\nu)=\Tc_c(\mu,\nu)$. Here $c$ is a continuous function over $\Omegax\times\Omegay$.
    
    \item The entropic optimal transport cost $\Tc(\mu,\nu)=\ET_{c,\eps}(\mu,\nu)$. Here $c\in C(\Omegax\times\Omegay)$ and $\eps>0$.
    
    \item The unbalanced optimal transport cost $\Tc(\mu,\nu)=\UT_{h,c}(\mu,\nu)$. Here $c\in C(\Omegax\times\Omegay)$ and $h_0,h_1\colon[0,+\infty)\to[0,\infty)$ are two finite-valued entropy functions.
\end{enumerate}
The energies $E$ we consider are convex internal energies to which can be added a potential energy. More precisely, let $f\colon [0,+\infty)\to[0,+\infty)$ be an entropy function (proper, lower semicontinuous, convex and superlinear, see \cref{def:entropy-function}) which we assume in addition finite and strictly convex, and let $m\in \Mp(\Omegay)$ denote a non-null reference measure. Consider the associated internal energy $H_{f,m}$ (\cref{def:internal-energy}). We fix $V\in C(\Omegay)$, and define the function $E\colon \Mp(\Omegay)\to\R\cup\{+\infty\}$ by
\begin{equation}\label{eq:def-F-ot-cp}
    E(\nu)=H_{f,m}(\nu) + \int_\Omegay V(y)\,d\nu(y).
\end{equation}

Variational problems of the form \eqref{eq:ot-min-problem} appear in minimizing movement schemes such as the Jordan--Kinderlehrer--Otto (JKO) scheme \cite{Jordan1998}, for which the transport term is given by the squared Wasserstein distance. Variants of the JKO scheme with a modified version of the entropic cost $\Tc_{c,\eps}$ have also been recently studied in \cite{baradat2025usingsinkhornjkoscheme}. For this reason we will refer to \eqref{eq:ot-min-problem} as the \textbf{JKO problem}.

\begin{lemma}\label{lemma:unique-solution}
    Let $\Tc(\mu,\nu)$ denote a transport cost of type (I)--(III), and let $E$ be defined by \eqref{eq:def-F-ot-cp}. Then given $\mu\in \Mp(\Omegax)$, the JKO problem \eqref{eq:ot-min-problem} admits a unique solution
    \begin{equation}
    \nu\coloneqq \JKO(\mu)\in\Mp(\Omegay).
\end{equation}
\end{lemma}
Since the proof of \cref{lemma:unique-solution} uses standard arguments from calculus of variations, we defer it until the end of this section.  
The main result of this section then reads

\begin{theorem}[Comparison principle for JKO problems]\label{thm:ot-cp} 
    Let $\Tc(\mu,\nu)$ denote a transport cost of type (I)--(III), and let $E$ be defined by \eqref{eq:def-F-ot-cp}. 
    Let $\mu_i\in \Mp(\Omegax)$ for $i=1,2$ and let $\nu_i=\JKO(\mu_i)$. Then 
    \begin{equation}\label{eq:thm:ot-cp}
        \mu_1\leq\mu_2 \implies \nu_1\leq\nu_2.
    \end{equation}
\end{theorem}

We recall that $\mu_1\leq\mu_2$ means that $\mu_1(A)\leq\mu_2(A)$ for every Borel set $A\subset \Omegax$, and similarly for the statement $\nu_1\leq\nu_2$. We postpone the proof of \cref{thm:ot-cp} until the end of the section to make some observations and draw some consequences.

For the Type (I) transport cost $\Tc=\Tc_c$, a similar result was obtained by Jacobs, Kim and Tong \cite{Jacobs2020TheP} using regularity of the cost in order to define a transport map associated with the transport problem. They then use the optimality conditions associated with the JKO scheme and the optimal map to derive an $L^1$ contraction principle similar to \cref{cor:tv-contraction}. In order for the optimal transport map for the cost to exist their approach requires the cost to be $C^1_{\mathrm{loc}}$ and twisted. In the present theorem no such assumptions are required on the cost as the comparison principle is a consequence of the structure of the problem rather than the cost itself. The present argument also bypasses technicalities by not referring to any underlying PDE.

We now derive from \cref{thm:ot-cp} a maximum principle in the spatially homogeneous case $V(y)=0$. 
\begin{equation}\label{eq:stationary-setting}
    \begin{minipage}[c]{0.85\textwidth}
        $\Omegax=\Omegay$, $V(y)=0$, $\Tc(\mu,\nu)=\Tc_c(\mu,\nu)$ is nonnegative and vanishes when  $\mu=\nu$.
    \end{minipage}
\end{equation}
A simple setting where the assumptions on $\Tc_c$ are satisfied is when $c(x,y)$ is nonnegative and vanishes on the diagonal $x=y$. The next lemma shows that under \eqref{eq:stationary-setting} the measure $\mu= C dm$, where $C$ is a positive constant, is stationary for the JKO problem.

\begin{lemma}\label{lemma:Cdm-stationary}
    Suppose that \eqref{eq:stationary-setting} holds. Let $C> 0$ be a constant. Then $\JKO(C dm)=C dm$.
\end{lemma}
\begin{proof} 
    Let us start by showing that for any $\mu,\nu\in\Mp(\Omega)$, 
    \begin{equation}\label{eq:proof-lemma:Cdm-stationary-1}
        \Tc_c(\mu,\nu)+H_{f,m}(\nu) \geq m(\Omega)f\big(\mu(\Omega)/m(\Omega)\big). 
    \end{equation}
    Firstly, by Jensen's inequality \cite[Eq. (2.44)]{LieroMielkeSavare2018} we have 
    \begin{equation}\label{eq:proof-lemma:Cdm-stationary-2}
        H_{f,m}(\nu) \geq m(\Omega)f\big(\nu(\Omega)/m(\Omega)\big).
    \end{equation}
    Secondly, under the hypotheses of the lemma we have $\Tc_c(\mu,\nu)\geq 0$. Adding this inequality to \eqref{eq:proof-lemma:Cdm-stationary-2} we obtain \eqref{eq:proof-lemma:Cdm-stationary-1} after noticing that we may replace $\nu(\Omega)$ by $\mu(\Omega)$ in \eqref{eq:proof-lemma:Cdm-stationary-2} since $\Tc_c(\mu,\nu)=+\infty$ when $\mu(\Omega)\neq\nu(\Omega)$.

    Take now $\mu=Cdm$ for some constant $C>0$. By \eqref{eq:proof-lemma:Cdm-stationary-1} we have $\Tc_c(\mu,\nu)+H_{f,m}(\nu) \geq m(\Omega)f(C)$, and $\nu=\mu$ produces equality. The claim follows.
\end{proof}

As an immediate corollary to \cref{thm:ot-cp} and \cref{lemma:Cdm-stationary}, we obtain a maximum principle.

\begin{corollary}[Maximum principle for JKO problems]\label{cor:max-principle-jko}
    Suppose that \eqref{eq:stationary-setting} holds. Let $\mu\in\Mp(\Omega)$ and let $\nu=\JKO(\mu)$. Let $C_0, C_1$ denote two positive constants. Then 
    \begin{equation}\label{eq:cor:max-principle-jko}
        C_0\leq \frac{d\mu}{dm}\leq C_1 \implies C_0\leq \frac{d\nu}{dm}\leq C_1.
    \end{equation}    
\end{corollary}

In particular, \eqref{eq:cor:max-principle-jko} implies the $L^\infty$ bound 
\begin{equation*}
    \norm{d\nu / dm}_\infty \leq \norm{d\mu / dm}_\infty.
\end{equation*}
(Here we abuse notation and allow the $L^\infty$ norms to be infinite). Of particular interest is the case where $\Omega$ is a compact subset of $\R^n$ and $m$ is the restriction of the Lebesgue measure to $\Omega$.

We now turn our attention to a different consequence of the comparison principle: contraction in the total variation (TV) norm, called the $L^1$-contraction principle in \cite{Jacobs2020TheP}.
In the standard and entropic transport cases (I) and (II), a simple property of the costs is that they force the mapping $\JKO$ to be mass-preserving, in the sense that the solution $\nu$ for initial data $\mu$ satisfies $\nu(\Omegay)=\mu(\Omegax)$. By a simple but powerful result of Crandall and Tartar \cite{CrandallTartar1980} this fact, combined with the order-preserving property of $\JKO$, implies a contraction in total variation norm $\norm{\cdot}_{\mathrm{TV}}$. 

\begin{corollary}[TV contraction]\label{cor:tv-contraction}
    Let $\Tc(\mu,\nu)$ denote a transport cost of type (I) or (II). 
    Let $\mu_i\in \Mp(\Omegax)$ and $\nu_i=\JKO(\mu_i)$. Then 
    \begin{equation}
        \norm{\nu_1-\nu_2}_{\mathrm{TV}} \leq \norm{\mu_1-\mu_2}_{\mathrm{TV}}.
    \end{equation}
\end{corollary}
\begin{proof}
    This is a direct consequence of the Crandall--Tartar result contained in \cref{lemma:Crandall-Tartar-II}. 
\end{proof}

The TV contraction is interesting in and of itself; in \cite{Jacobs2020TheP} it was put forward to derive bounds in bounded variation under a translation-invariance assumption.

We now set out to prove \cref{thm:ot-cp}. The main ingredient to prove the comparison principle is a substitutability property of the transport costs.

\begin{lemma}\label{lemma:Tc-exchangeable}
    The function $F\colon (\eta,\tau)\mapsto \Tc_c(\eta,-\tau)$ is jointly substitutable over $\Mp(\Omegax)\times\Mm(\Omegay)$. 
\end{lemma}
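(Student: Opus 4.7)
The plan is to identify $F$ as a convex conjugate of a submodular function and then invoke \Cref{thm:duality-submodular-exch}. Consider the product Banach lattice $X = C(\Omegax) \times C(\Omegay)$; its dual is $X^\ast = \Mc(\Omegax) \times \Mc(\Omegay)$ equipped with the product order, which is exactly the order used on $\Mp(\Omegax) \times \Mm(\Omegay)$ in the statement. Let
\begin{equation*}
    A = \{(\phi,\psi) \in X : \phi(x) - \psi(y) \leq c(x,y) \text{ for all } x \in \Omegax,\, y \in \Omegay\}.
\end{equation*}
Applying the dual formulation \eqref{eq:ot-dual-phi-psi-formulation} with $\mu = \eta$ and $\nu = -\tau$, I obtain
\begin{equation*}
    F(\eta,\tau) = \Tc_c(\eta,-\tau) = \sup_{(\phi,\psi) \in A} \int_\Omegax \phi\, d\eta + \int_\Omegay \psi\, d\tau = \iota_A^\ast(\eta,\tau),
\end{equation*}
where the convex conjugate is taken with respect to the natural $X$--$X^\ast$ duality. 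Extending $F$ by $+\infty$ off $\Mp(\Omegax) \times \Mm(\Omegay)$ is automatic since outside this set the supremum above is $+\infty$.

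The main (and essentially only) work is to check that $A$ is a sublattice of $X$, so that $\iota_A$ is a proper convex submodular function. Fix $(\phi_1,\psi_1), (\phi_2,\psi_2) \in A$, fix $x \in \Omegax$, $y \in \Omegay$, and assume without loss of generality $\phi_1(x) \leq \phi_2(x)$. For the meet,
\begin{equation*}
    (\phi_1 \wedge \phi_2)(x) - (\psi_1 \wedge \psi_2)(y) = \max\bigl(\phi_1(x) - \psi_1(y),\, \phi_1(x) - \psi_2(y)\bigr),
\end{equation*}
and both terms are bounded by $c(x,y)$: the first by the admissibility of $(\phi_1,\psi_1)$, the second by $\phi_1(x) - \psi_2(y) \leq \phi_2(x) - \psi_2(y) \leq c(x,y)$. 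A symmetric case analysis for the join gives $(\phi_1 \vee \phi_2)(x) - (\psi_1 \vee \psi_2)(y) = \min\bigl(\phi_2(x) - \psi_1(y),\, \phi_2(x) - \psi_2(y)\bigr) \leq c(x,y)$. Hence $(\phi_1\wedge\phi_2,\psi_1\wedge\psi_2)$ and $(\phi_1\vee\phi_2,\psi_1\vee\psi_2)$ both lie in $A$.

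To finish, $A$ is convex (obvious) and a sublattice (just shown), so $\iota_A\colon X \to \R\cup\{+\infty\}$ is proper, convex, and submodular. \Cref{thm:duality-submodular-exch} then yields that $F = \iota_A^\ast$ is exchangeable on $X^\ast$, which is exactly joint exchangeability of $(\eta,\tau) \mapsto \Tc_c(\eta,-\tau)$ on $\Mp(\Omegax) \times \Mm(\Omegay)$. The only delicate step is the lattice verification above; recognizing $F$ as a conjugate then converts the problem into a one-line application of the duality theorem.
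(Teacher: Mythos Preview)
Your proof is correct and follows essentially the same approach as the paper: identify $F$ as the convex conjugate of the indicator of the constraint set $A$, verify that $A$ is a sublattice (the paper does this in the proof of \Cref{lemma:K-submodular} and simply cites it here), and apply the submodular--exchangeable duality. The only cosmetic difference is that the paper invokes \Cref{cor:submodular-exchangeable} (hence also checks closedness of $A$), whereas you go directly through \Cref{thm:duality-submodular-exch}, which does not need lower semicontinuity.
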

\begin{proof}
    Define 
    \begin{equation*}
        A = \{(\phi,\psi)\in C(\Omegax)\times C(\Omegay) : \forall x\in \Omegax,\forall y\in\Omegay,\quad \phi(x)-\psi(y)\leq c(x,y)\}.
    \end{equation*}
    We established in the proof of \Cref{lemma:K-submodular} that $A\subset C(\Omegax)\times C(\Omegay)$ is a closed convex lattice and therefore that $\iota_A$ is a lower semicontinuous convex submodular function over $C(\Omegax)\times C(\Omegay)$.
    Let $F\colon \Mc(\Omegax)\times \Mc(\Omegay)\to\R\cup\{+\infty\}$ denote the convex conjugate of $\iota_A$,
    \[
    F(\eta,\tau)=\sup_{(\phi,\psi)\in A}\int_\Omegax\phi\,d\eta + \int_\Omegay \psi\,d\tau.
    \]
    By \Cref{cor:submodular-exchangeable}, $F$ is substitutable. Note that here $\eta$ and $\tau$ are signed measures. However, if $\eta$ is not a positive measure then there exists an open set $U\subset\Omegax$ such that $\eta(U)<0$ (by outer regularity). A function $\phi$ admissible in $A$ may then take arbitrarily small values on $U$ resulting in $F(\eta,\tau)=+\infty$. In other words, $F(\eta,\tau)<+\infty$ only when $\eta$ is a positive measure. Similarly, $F(\eta,\tau)<+\infty$ only when $\tau$ is a negative measure. 
    
    The domain of $F$ is therefore included in  $\Mp(\Omegax)\times\Mm(\Omegay)$ and we may as well restrict $F$ to this set. 
    Note also that adding the same constant to both $\phi$ and $\psi$ leaves the constraint set invariant, so that $F(\eta,\tau)$ is finite only when $\eta(\Omegax)+\tau(\Omegay)=0$.
\end{proof}

\begin{lemma}\label{lemma:exchangeability-entropic-cost}
    Let $\eps>0$ and $\alpha \in \Mp(\Omegax),\beta \in \Mp(\Omegay)$ be two nonvanishing reference measures. Then the function $F\colon (\eta,\tau)\mapsto \ET_{c,\eps}(\eta,-\tau)$ is jointly substitutable over $\Mp(\Omegax)\times\Mm(\Omegay)$. 
\end{lemma}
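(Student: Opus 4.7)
The plan is to mirror the proof of \Cref{lemma:Tc-exchangeable}, replacing the hard constraint set $\iota_A$ by a soft penalty obtained from the entropic dual \eqref{eq:eot-dual}. Concretely, I would introduce
\begin{equation*}
    G(\phi,\psi) = \eps\iint_{\Omegax\times\Omegay} e^{(\phi(x)-\psi(y)-c(x,y))/\eps}\,d\alpha(x)\,d\beta(y)
\end{equation*}
on $C(\Omegax)\times C(\Omegay)$, and show that $G$ is a proper, convex, continuous, \emph{jointly submodular} functional. Convexity follows from convexity of $t\mapsto e^{t/\eps}$ and linearity of integration. Joint submodularity is pointwise: for each fixed $(x,y)$ the real function $(a,b)\mapsto e^{(a-b-c(x,y))/\eps}$ is submodular on $\R^2$ by \Cref{lemma:convexdiff_submodular} applied to the convex map $t\mapsto e^{(t-c(x,y))/\eps}$; evaluating at $(a,b)=(\phi(x),\psi(y))$ and integrating the pointwise submodularity inequality against $\alpha\otimes\beta$ preserves the submodularity on $C(\Omegax)\times C(\Omegay)$.

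Then I would pass to the convex conjugate. Since $G$ is proper, convex, lower semicontinuous and submodular, \Cref{thm:duality-submodular-exch} yields that $G^{\ast}$ is exchangeable on $\Mc(\Omegax)\times\Mc(\Omegay)$ (equipped with its product Banach-lattice structure). A direct identification using \eqref{eq:eot-dual} gives, for every $(\eta,\tau)\in\Mp(\Omegax)\times\Mm(\Omegay)$ with compatible masses,
\begin{equation*}
    G^{\ast}(\eta,\tau) = \sup_{\phi,\psi}\Big\{\int_\Omegax\phi\,d\eta+\int_\Omegay\psi\,d\tau - G(\phi,\psi)\Big\} = \Tc_{c,\eps}(\eta,-\tau) = F(\eta,\tau).
\end{equation*}

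To pin down the domain of $G^{\ast}$, I would replicate the reasoning at the end of the proof of \Cref{lemma:Tc-exchangeable}: if $\eta$ is not a positive measure, by duality there exists a continuous $\phi_0\geq 0$ with $\int\phi_0\,d\eta<0$; then $\phi=-M\phi_0$, $\psi\equiv 0$ makes the bracket $\int\phi\,d\eta$ tend to $+\infty$ while the exponential penalty remains bounded (because $\phi\leq 0$), so $G^{\ast}(\eta,\tau)=+\infty$. A symmetric argument rules out $\tau\notin\Mm(\Omegay)$, and adding constants to $\phi$ or $\psi$ forces $\eta(\Omegax)+\tau(\Omegay)=0$. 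Hence the effective domain of $G^{\ast}$ lies in $\Mp(\Omegax)\times\Mm(\Omegay)$ and $G^{\ast}=F$ there.

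Finally, exchangeability of $F$ follows automatically: given $\gamma_i=(\mu_i,\tau_i)\in\Mp(\Omegax)\times\Mm(\Omegay)$ and $t_{21}\in[0,(\gamma_2-\gamma_1)^+]$, the element $t_{12}\in[0,(\gamma_1-\gamma_2)^+]$ produced by exchangeability of $G^{\ast}$ yields perturbed points in $[\gamma_1\wedge\gamma_2,\gamma_1\vee\gamma_2]\subset\Mp(\Omegax)\times\Mm(\Omegay)$, so the exchangeability inequality reads directly as an inequality for $F$. The only mildly delicate step is the identification $G^{\ast}=F$ together with the domain check; once this is in place the joint exchangeability is just an instance of \Cref{thm:duality-submodular-exch}.
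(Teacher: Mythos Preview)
Your proposal is correct and follows essentially the same route as the paper: define the soft penalty $G$, verify it is proper convex lower semicontinuous and jointly submodular (pointwise via \Cref{lemma:convexdiff_submodular}, then integrate), apply \Cref{thm:duality-submodular-exch} to obtain exchangeability of $G^\ast$, and identify $G^\ast=F$ via the entropic dual \eqref{eq:eot-dual} together with the domain check. Your domain argument (pushing $\phi$ to $-\infty$ on the negative part of $\eta$ while keeping the exponential bounded) is a slightly more explicit variant of the paper's, and the ``adding constants'' step should read as adding the \emph{same} constant to both $\phi$ and $\psi$.
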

\begin{proof}
    Following the proof of \Cref{lemma:K-eps-submodular} we have that 
    \begin{equation*}
        (\phi,\psi) \mapsto \eps \int e^{[\phi(x)-\psi(y)-c(x,y)]/\eps} d\alpha(x)d\beta(y)
    \end{equation*}
    is submodular and convex over $C(\Omegax)\times C(\Omegay)$. It is also lower semicontinuous by positivity of the exponential using Fatou's lemma. Thus \Cref{thm:duality-submodular-exch} ensures that 
    \begin{equation*}
       \ET_{c,\eps}(\eta,-\tau) = F(\eta,\tau) = \sup_{(\phi,\psi)\in C(\Omegax)\times C(\Omegay)} \int \phi d\eta + \int \psi d\tau - \eps \int e^{[\phi(x)-\psi(y)-c(x,y)]/\eps} d\alpha(x)d\beta(y)
    \end{equation*}
    is substitutable. The proof is then the same as in \Cref{lemma:Tc-exchangeable} and the domain of $F$ is still included in $\Mp(\Omegax)\times\Mm(\Omegay)$.
\end{proof}

\begin{lemma}\label{lemma:exchangeability-unbalanced-cost}
    Let $h_0,h_1\colon [0,+\infty) \to [0,+\infty)$ be two entropy functions. Then the function $F\colon (\eta,\tau)\mapsto \UT_{h,c}(\eta,-\tau)$ is jointly substitutable over $\Mp(\Omegax)\times\Mm(\Omegay)$. 
\end{lemma}

\begin{proof}
    By definition of $\UT_{h,c}$ we have for $(\eta,\tau) \in \Mp(\Omegax)\times\Mm(\Omegay)$,
    \begin{equation*}
        F(\eta,-\tau) = \sup_{(\phi,\psi)\in A} \int_\Omegax -h_0^*(-\phi(x))\,d\eta(x) + \int_\Omegay h_1^*(\psi(y))\,d\tau(y),
    \end{equation*}
    where $A$ is the same set as in the proof of \Cref{lemma:Tc-exchangeable} and as such is a lattice. $F$ is almost the Legendre-Fenchel transform of the submodular convex function $\iota_A$, however the scalar product has been replaced by $\int_\Omegax -h_0^*(-\phi)\,d\eta + \int_\Omegay h_1^*(\psi)\,d\tau$. Let $(\eta_1,\tau_1),(\eta_2,\tau_2) \in \Mp(\Omegax)\times\Mm(\Omegay)$, that we denote by $m_i = (\eta_i,\tau_i)$. Let $(\phi_1,\psi_1),(\phi_2,\psi_2) \in A$ and set $f_i = (-h_0^\ast(-\phi_i),h_1^\ast(\psi_i))$. The Banach lattices $\Mc(\Omegax)\times\Mc(\Omegay)$ and $C(\Omegax)\times C(\Omegay)$ are in duality which allows for the use of \cref{lemma:duality-bracket}. Let $t_{21} \in [0,(m_2-m_1)^+]$ by \cref{lemma:duality-bracket} there is $t_{12} \in [0,(m_2-m_1)^-]$ such that
    \begin{equation*}
        \langle m_1 + t_{21}-t_{12},f_1 \rangle + \langle m_2 - t_{21}+t_{12},f_2 \rangle \leq \langle m_1, f_1 \wedge f_2\rangle + \langle m_2, f_1 \vee f_2 \rangle.
    \end{equation*}
    Since $h^\ast_0,h^\ast_1$ are non decreasing we have $f_1 \wedge f_2 = (-h_0^\ast(-(\phi_1\wedge \phi_2)),h_1^\ast(\psi_1\wedge \psi_2))$ and similarly for $f_1 \vee f_2$. Thus since $A$ is a lattice we obtain
    \begin{equation*}
        \langle m_1 + t_{21}-t_{12},f_1 \rangle + \langle m_2 - t_{21}+t_{12},f_2 \rangle \leq F(m_1) + F(m_2).
    \end{equation*}
    We will conclude in a similar fashion to \cref{thm:duality-submodular-exch} using a minimax theorem.
    Observe that $((\eta,\tau),(\phi,\psi)) \mapsto \langle (\eta,\tau) , (-h_0^\ast(-\phi),h_1^\ast(\psi))\rangle - \iota_A((\phi,\psi))$ is convex in $(\eta,\tau)$ and concave in $(\phi,\psi)$ by convexity of $h_i^\ast$ and because $\tau$ is nonpositive. Thus by \cite[Theorem 3.1]{SimonsMinimaxBook} we have the result.
\end{proof}
The proof of \Cref{thm:ot-cp} is now a direct combination of some facts established in \Cref{sec:submodularity-exchangeability}.

\begin{proof}[Proof of \Cref{thm:ot-cp}]
    
    Let $\mu_1\leq\mu_2 \in \Mp(\Omegax)$.
    Define $
    F(\mu,\eta)=\Tc(\mu,-\eta)$ on $\Mp(\Omegax)\times \Mm(\Omegay)$. By \Cref{lemma:Tc-exchangeable,lemma:exchangeability-entropic-cost,lemma:exchangeability-unbalanced-cost}, $F$ is jointly substitutable. Therefore $F(\mu_2,\cdot) \leQ F(\mu_1,\cdot)$ by \Cref{lemma:Q-order-joint-exch}. Due to the change of sign on the second variable it is clear that we obtain $\Tc(\mu_1,\cdot) \leQ \Tc(\mu_2,\cdot)$.  Moreover $E$ is totally substitutable by \Cref{lemma:internal_energy}. Then \Cref{prop:sum-total-exch} gives $\Tc(\mu_1,\cdot) + E(\cdot)\leQ \Tc(\mu_2,\cdot)+ E(\cdot)$. Using \Cref{prop:argmin-exch} we obtain $\argmin_\nu \Tc(\mu_1,\cdot) + E(\cdot)\leQ \argmin \Tc(\mu_2,\cdot) + E(\cdot)$. Since the minimizers are unique by \cref{lemma:unique-solution} we have by \Cref{lemma:Q-singletons} that $\JKO(\mu_1) \leq \JKO (\mu_2)$.
\end{proof}


\begin{proof}[Proof of \cref{lemma:unique-solution}]
    In order to prove the existence and uniqueness of the minimizers to \eqref{eq:ot-min-problem} we will need to detail some properties of $E$ defined by \eqref{eq:def-F-ot-cp}. First the strict convexity of $f$ ensures the strict convexity of $E$. Second we claim that the sublevel sets of $E$ are compact. They are closed by lower semicontinuity of $f$. Now let $t \in \mathbb{R}$ be such that $\{\nu \mid E(\nu) \leq t\}$ is non empty. Take $\nu$ such that $E(\nu) \leq t$, then by Jensen's inequality, $f\left(\frac{\nu(\Omegay)}{m(\Omegay)}\right) m(\Omegay)-\Vert V \Vert_\infty \nu(\Omegay) \leq t$. Since $f$ is superlinear we get that $\nu(\Omegay) \leq C$ for some constant $C$. Since the sublevel set of $E$ is contained in a set of measures with bounded mass, it is compact by Prokhorov's theorem. 
    
    Let $\mu \in \Mp(\Omegax)$. We now prove the existence and uniqueness of the minimizers to \eqref{eq:ot-min-problem}. Let $\nu_n \in \Mp(\Omegay)$ be a minimizing sequence. Since the sublevel sets of $E$ are compact and $\Tc$ is bounded from below, up to a not relabeled subsequence, $(\nu_n)_n$ converges to some $\nu \in \Mp(\Omegay)$. By lower semicontinuity of $\Tc$ and $E$, we have that $\nu$ is in fact a solution to \eqref{eq:ot-min-problem}. Uniqueness follows from the strict convexity of $E$ and the convexity of $\Tc$. 
\end{proof}

We conclude the section with another result of Crandall and Tartar, cousin to \cref{lemma:Crandall-Tartar}, which shows that an order-preserving function which conserves mass is a contraction. Again we slightly adapt their setting to ours and include a variant of their proof.

\begin{lemma}[{\cite[Prop. 1]{CrandallTartar1980}}]\label{lemma:Crandall-Tartar-II}
    Consider a mapping $T\colon \Mp(\Omegax)\to \Mp(\Omegay)$ which satisfies
    \begin{enumerate}[(a)]
        \item $\mu_1\leq\mu_2$ implies $T(\mu_1)\leq T(\mu_2)$ for all $\mu_1,\mu_2\in \Mp(\Omega)$;
        \item $\int_\Omegay T(\mu)=\int_\Omegax \mu$ for all $\mu\in\Mp(\Omegax)$.
    \end{enumerate}
    Then $T$ is $1$-Lipschitz in total variation norm: $\norm{T(\mu_1)-T(\mu_2)}_{\mathrm{TV}} \leq \norm{\mu_1-\mu_2}_{\mathrm{TV}}$.     
\end{lemma}
\begin{proof}
    Since $T$ is order-preserving, by following the same steps as in the proof of \cref{lemma:Crandall-Tartar} we obtain  that 
    \begin{equation*}
        \abs{T(\mu_1)-T(\mu_2)}\leq T(\mu_1\vee\mu_2)-T(\mu_1\wedge\mu_2).
    \end{equation*}
    Note that $\abs{\cdot}$ now denotes the total variation of a measure; here the advantage of the lattice formalism is apparent since the same computations can be carried out for continuous functions or measures. Integrating the measures over $\Omegay$, we obtain 
    \[
    \norm{T(\mu_1)-T(\mu_2)}_{\mathrm{TV}}=\int_\Omegay\abs{T(\mu_1)-T(\mu_2)} \leq \int_\Omegay T(\mu_1\vee\mu_2)-\int_\Omegay T(\mu_1\wedge\mu_2).
    \]
    Using hypothesis (b) we deduce that 
    \begin{equation*}
        \int_\Omegay T(\mu_1\vee\mu_2)-\int_\Omegay T(\mu_1\wedge\mu_2) = \int_\Omegax (\mu_1\vee\mu_2)-(\mu_1\wedge\mu_2)=\int_\Omegax \abs{\mu_1-\mu_2}.
    \end{equation*}
    This proves the desired inequality.
\end{proof}

\printbibliography

\end{document}